\newtheorem{theorem}{Theorem}[section]
\newtheorem{proposition}[theorem]{Proposition}
\newtheorem{corollary}[theorem]{Corollary}
\newtheorem{lemma}[theorem]{Lemma}
\theoremstyle{definition}
\newtheorem{definition}[theorem]{Definition}
\newtheorem{remark}[theorem]{Remark}
\theoremstyle{problem}
\newcommand{\Aut}{\mathrm{Aut}}
\newcommand{\Opp}{\mathrm{Opp}}
\newcommand{\proj}{\mathrm{proj}}
\newcommand{\RR}{\mathbb{R}}
\newcommand{\ZZ}{\mathbf{Z}}
\newcommand{\NN}{\mathbb{N}}
\newcommand{\cat}{$\mathrm{CAT}(0)$\xspace}
\newcommand{\Min}{\mathrm{Min}}
\newcommand{\Ch}{\mathrm{Ch}}
\newcommand{\St}{\mathrm{St}}
\newcommand{\Stab}{\mathrm{Stab}}
\newcommand{\Fix}{\mathrm{Fix}}
\newcommand{\bd}{\partial}
\newcommand{\id}{\operatorname{id}}
\newcommand{\Id}{\operatorname{Id}}
\newcommand{\st}{\operatorname{St}}
\newcommand{\Top}{\operatorname{\mathrm{Top}}}
\def\og{\leavevmode\raise.3ex\hbox{$\scriptscriptstyle\langle\!\langle$~}}
\def\fg{\leavevmode\raise.3ex\hbox{~$\!\scriptscriptstyle\,\rangle\!\rangle$}}
\title{Dynamics of strongly $I$-regular hyperbolic elements\\ on affine buildings}
\author{Corina Ciobotaru\thanks{cociobotaru@math.au.dk}}
\date{July 15, 2024}
\begin{document}
\maketitle
\begin{abstract}
The first goal of this article is to investigate a refinement of previously-introduced strongly regular hyperbolic automorphisms of locally finite thick Euclidean buildings $\Delta$ of finite Coxeter system $(W,S)$. The new ones are defined for each proper subset $I \subsetneq S$ and called strongly $I$-regular hyperbolic automorphisms of $\Delta$. Generalizing previous results, we show that such elements exist in any group $G$ acting cocompactly and by automorphisms on $\Delta$. Although the dynamics of strongly $I$-regular hyperbolic elements $\gamma$ on the spherical building $\partial_\infty \Delta$ of $\Delta$ is much more complicated than for the strongly regular ones, the $\lim\limits_{n\to \infty} \gamma^{n}(\xi)$ still exists in $\partial_\infty \Delta$ for ideal points $\xi \in \partial_\infty \Delta$ that satisfy certain assumptions. An important role in this business is played by the cone topology on $\Delta \cup \partial_\infty \Delta$ and the projection of specific residues of $\partial_\infty \Delta$ on the ideal boundary of $\Min(\gamma)$.

All the above research is performed in order to achieve the second, and main, goal of the article. Namely, we prove that for closed groups $G$ with a type-preserving and strongly transitive action by automorphisms on $\Delta$, the Chabauty limits of certain closed subgroups of $G$ contain as a normal subgroup the entire unipotent radical of concrete parabolic subgroups of $G$.
\end{abstract}

\tableofcontents

\section{Introduction}

Strongly regular hyperbolic automorphisms of locally finite thick Euclidean buildings $\Delta$ were introduced and studied in \cite{CaCi}. They have a captivating geometry when acting on $\Delta$ and intriguing dynamical properties when acting on the ideal boundary $\partial_\infty \Delta$ of $\Delta$. Still, their key application in \cite{CaCi} is the equivalence between the purely geometric property of a closed subgroup $G \leq \Aut(\Delta)$ of type-preserving automorphisms $G$ to act strongly transitively on $\Delta$, and the harmonic analytic property of the pair $(G,K)$ to be Gelfand, when $K$ is the stabilizer in $G$ of a special vertex of $\Delta$. 

Being a valuable tool in many situations, dynamics demonstrates that it is also applicable to the context of connected reductive groups $G$ defined over non-Archimedean local fields. Namely, we want to employ dynamics when computing the Chabauty limits of the fixed point group $H$ of an involutorial automorphism of $G$. 

First in the series, this paper smoothes the path for the above-mentioned goal. In a nutshell, we define the so-called strongly $I$-regular hyperbolic automorphisms of $\Delta$, study their existence and dynamical properties in a broad setting, and, as a main application, provide very general conditions for Chabauty limits of certain closed subgroups of well-behaved $G \leq \Aut(\Delta)$ to contain as a normal subgroup the entire unipotent radical of concrete parabolic subgroups of $G$.

In order to present our main results, we first need to recall some standard terminology. We let $\Delta$ be a locally finite thick Euclidean building of dimension $\geq 1$. With respect to that, the ideal boundary of $\Delta$ is denoted by $\partial_{\infty} \Delta$. The latter is also endowed with a structure of a spherical building and has associated a finite Coxeter system denoted by $(W,S)$. The group $(W,S)$ is the finite Weyl group associated with $\Delta$. 

Given that data, to any ideal simplex $\sigma$ in $\partial_\infty \Delta$ one can assign in a unique way a subset $I_{\sigma} \subsetneq S$ (including the empty set) called the type of $\sigma$ (see Definition \ref{def::type_simplex}). Each element from the set $I_\sigma$ corresponds to a specific wall of the spherical building $\partial_{\infty} \Delta$, and also to a specific family of parallel walls in the affine building $\Delta$. 

For a given type $I \subsetneq S$ (including the empty set), we consider the set $I(\partial_\infty \Delta)$ of all ideal simplices in $\partial_\infty \Delta$ having type $I$. As explained in Section \ref{subsec::cone_top_type_I}, the set $I(\partial_\infty \Delta)$ is endowed with the cone topology that is compatible with the continuous  action of $\Aut(\Delta)$ on $\Delta \cup \partial_\infty \Delta$. Now, a strongly $I$-regular automorphism of $\Delta$ is a hyperbolic element of $\Aut(\Delta)$ whose bi-infinite geodesic translation axes have their two endpoints lying in the interior of (same) two opposite ideal simplices of $\partial_\infty \Delta$ of type $I$. For the precise definitions and basic properties of those automorphisms the reader can jump to Section \ref{subsec::I_regular_hyp_elem}. When $I=\{\emptyset\}$, one recovers the notion of strongly regular hyperbolic automorphisms of $\Delta$ from \cite{CaCi}.

Section \ref{subsec::existence_strongly_regular} proves the first main result of this article, namely Theorem \ref{thm:ExistenceStronglyReg_1} stated below. It claims the existence of strongly $I$-regular hyperbolic automorphisms in well-behaved closed subgroups $G \leq \Aut(\Delta)$. This is a fair generalization of \cite[Theorem 1.2]{CaCi}.

\begin{theorem}
\label{thm:ExistenceStronglyReg_1}
Let $G$ be a group acting cocompactly by automorphisms on a locally finite thick Euclidean building $X$. Let $(W,S)$ be the associated finite Coxeter system of $X$, and $I \subsetneq S$. Then $G$ contains a strongly $I$-regular hyperbolic element. 
\end{theorem}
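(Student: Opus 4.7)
The plan is to generalize \cite[Theorem 1.2]{CaCi} (which handles $I = \emptyset$) by an extra symmetrization step that forces the translation vector of a strongly regular element into the desired type-$I$ face of the Weyl chamber. If $I = \emptyset$ the statement is exactly \cite[Theorem 1.2]{CaCi}, so I henceforth assume $I \neq \emptyset$.

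First, apply \cite[Theorem 1.2]{CaCi} to obtain a strongly regular hyperbolic element $g_0 \in G$. Its axis $\ell_0$ lies in some apartment $A_0$ of $X$, and $g_0$ acts on $A_0$ as a Euclidean translation by a vector $v_0$ whose direction is in the interior of a spherical Weyl chamber $C_0 \subseteq \partial_\infty A_0$. The target is an element whose translation vector lies in the relative interior of the type-$I$ face $F_I := \overline{C_0} \cap \bigcap_{s \in I} H_s$, that is, in the interior of $\Fix(W_I) \cap \overline{C_0}$.

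The symmetrization trick is as follows. Suppose, for each $w \in W_I$, we can produce an element $h_w \in G$ stabilizing $A_0$ whose linear part on $A_0$ is $w$. Then each $h_w g_0 h_w^{-1}$ acts on $A_0$ as a translation by $w \cdot v_0$, so the factors of
\[ g_I \;:=\; \prod_{w \in W_I} h_w \, g_0 \, h_w^{-1} \]
pairwise commute (being translations of a Euclidean space), and the product acts as a translation by
\[ \sum_{w \in W_I} w \cdot v_0 \;=\; |W_I|\, \pi_I(v_0), \]
where $\pi_I$ denotes orthogonal projection onto $\Fix(W_I)$. Since $v_0$ is in the interior of $C_0$ and $I \subsetneq S$, standard root-system considerations give that $\pi_I(v_0)$ is nonzero and lies in the relative interior of $F_I$. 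Hence $g_I$ is hyperbolic on $A_0$ with translation direction in the interior of a type-$I$ simplex, and by the symmetry of the construction its backward endpoint lies in the interior of the opposite type-$I$ simplex, so $g_I$ is strongly $I$-regular.

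The main obstacle is to exhibit the elements $h_w \in G$ realizing each $w \in W_I$ as an affine automorphism of $A_0$. Cocompactness of the $G$-action alone does not directly produce lifts of arbitrary Weyl group elements into the setwise stabilizer of an apartment. To overcome this, I would combine cocompactness with the thickness of $X$ and exploit the cone topology on $X \cup \partial_\infty X$ together with the projection onto $\partial_\infty \Min(\cdot)$ developed in the earlier sections. Possibly after replacing $g_0$ by a large power $g_0^N$, the stabilizers in $G$ of sufficiently long segments of $\ell_0$ must realize enough of the local Weyl-group action on the link of a special vertex of $A_0$ to produce the required reflections $r_s$ for $s \in I$ (and hence all of $W_I$). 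Once the $h_w$ are in hand, the concluding verification that the axis endpoints of $g_I$ lie in the relative interior of type-$I$ simplices (and not on any lower face) is automatic from the explicit shape $|W_I|\,\pi_I(v_0)$ of the translation direction.
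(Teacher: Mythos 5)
There is a genuine gap, and you have correctly located it yourself: the existence of the elements $h_w\in G$ stabilizing the apartment $A_0$ and inducing the reflections generating $W_I$. Cocompactness gives you nothing of the sort, and the suggested repair --- that stabilizers in $G$ of long segments of $\ell_0$ ``must realize enough of the local Weyl-group action on the link of a special vertex'' --- is false in general. A torsion-free uniform lattice $G$ in, say, $\mathrm{PGL}_3(\mathbb{Q}_p)$ acts cocompactly on the associated building, but every nontrivial element of $G$ is hyperbolic (a nontrivial elliptic element in a properly discontinuous action has finite order), so $G$ contains no element inducing a reflection on any apartment: such an element would fix a wall of $A_0$ pointwise and hence be elliptic. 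For the same reason the pointwise stabilizer in $G$ of a segment of $\ell_0$ is trivial. So the symmetrization $g_I=\prod_{w\in W_I}h_w g_0 h_w^{-1}$ cannot be formed, and the strategy of post-processing a strongly regular element into a strongly $I$-regular one by averaging its translation vector over $W_I$ breaks down exactly in the class of groups the theorem is meant to cover. (The linear algebra you invoke is fine: $\pi_I(v_0)=\frac{1}{|W_I|}\sum_{w\in W_I}wv_0$ does land in the relative interior of the type-$I$ face when $v_0$ is in the open chamber, and an element preserving $A_0$ and translating it by that vector would indeed be strongly $I$-regular. The problem is purely the existence of the $h_w$.)

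The paper avoids this obstruction entirely by not starting from a strongly regular element at all. Instead it observes (Lemma~\ref{existance_reg_element}) that the affine Coxeter complex of an apartment always contains a strongly $I$-regular geodesic line $\ell$ passing through infinitely many special vertices of a fixed type, and then reruns the compactness--pigeonhole argument of \cite{CaCi} directly on $\ell$: local finiteness plus cocompactness let one choose $g_n\in G$ bringing $\rho(f(n)C)$ into a fixed fundamental domain and extract a diagonal subsequence so that $h_{m,n}=g_m^{-1}g_n$ maps a long subsegment of a (limiting) strongly $I$-regular geodesic onto a shifted copy of itself (Lemma~\ref{lem:basic-const}, Proposition~\ref{prop:TechSRH}); such an element is automatically hyperbolic with a strongly $I$-regular axis by Lemma~\ref{lem:LocalCrit:SRL}. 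No element of $G$ is ever required to stabilize an apartment or realize a Weyl group element. If you want to keep the spirit of your reduction, the correct move is to replace ``strongly regular geodesic'' by ``strongly $I$-regular geodesic'' at the very start of the \cite{CaCi} argument, rather than to symmetrize its output.
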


\medskip
The intriguing dynamics of strongly $I$-regular hyperbolic automorphisms on $\partial_\infty \Delta$ makes its appearance in Section \ref{subsect:dynamics-reg}. 
After some investigation of projections (defined on $\partial_\infty \Delta$) of residues of type $J \subsetneq S$ on residues of type $I  \subsetneq S$ in $\partial_\infty \Delta$, the outcome of Section \ref{subsect:dynamics-reg} is Proposition \ref{prop::dynamics_I_regular}. That is presented here in a very light form for the sake of the reader and to ease the exposition. 

\begin{proposition}(See Proposition \ref{prop::dynamics_I_regular} for the precise statement.)
\label{prop::dynamics_I_regular_1}
Let $\Delta$ be a Euclidean building, $(W,S)$ the associated finite Coxeter system, $I \subsetneq S$, and $\gamma \in \Aut(\Delta)$ a type-preserving strongly $I$-regular hyperbolic element. 

Then, under some concrete assumptions on $\gamma$ and ideal point $\xi \in \partial_\infty\Delta$, the limit $\lim\limits_{n\to \infty} \gamma^{n}(\xi)$ exists in $\partial_\infty\Delta$ and coincides with a precise point in the ideal boundary $\partial_\infty \Min(\gamma)$ of the set $\Min(\gamma) \subset \Delta$ of $\gamma$-translated points.
\end{proposition}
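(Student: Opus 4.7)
The plan is to analyze the convergence via the cone topology, the product structure of $\Min(\gamma)$, and building-theoretic projections on the spherical building at infinity. First, since $\gamma$ is hyperbolic, $\Min(\gamma)$ splits isometrically as $Y \times \RR$ with $\gamma$ acting as translation by $\tau(\gamma) > 0$ on the $\RR$ factor and trivially on $Y$; its ideal boundary is the spherical join $\partial_\infty Y \ast \{\xi^-, \xi^+\}$, where $\xi^\pm$ are the two endpoints of the axes and lie by hypothesis in the interiors of opposite type-$I$ simplices $\sigma^\pm$. I would fix a basepoint $o \in Y \times \{0\}$ and use the fact that convergence in the cone topology is characterized by uniform convergence on compact sets of geodesic rays emanating from $o$.

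Next, I would identify the candidate limit. Given $\xi \in \partial_\infty \Delta$, take the minimal ideal simplex $\tau_\xi$ containing $\xi$ and project it inside $\partial_\infty \Delta$ onto the residue of $\sigma^+$ via the standard spherical-building projection discussed in Section \ref{subsect:dynamics-reg}. The concrete assumption on $\xi$ should be precisely what is needed for this projection to land in a simplex whose interior meets $\partial_\infty \Min(\gamma)$; under that assumption, the corresponding interior point $\eta \in \partial_\infty \Min(\gamma)$ is the proposed limit, expressed in the join decomposition as a combination of the attracting endpoint $\xi^+$ and a point of $\partial_\infty Y$.

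For the convergence $\gamma^n(\xi) \to \eta$, I would work in an apartment $A$ containing both an axis of $\gamma$ and (an initial portion of) the ray $[o, \xi)$. In $A$ the situation becomes explicit: $\gamma$ acts by a Euclidean translation whose direction is the interior type-$I$ point $\xi^+$, and standard Weyl-sector geometry shows that the images $\gamma^n([o, \xi) \cap A)$, rebased at $o$, align on larger and larger initial segments with the geodesic from $o$ to $\eta$; roots peeled off by $\gamma$ correspond precisely to the simple reflections in $S \setminus I$. Combined with compactness of $\partial_\infty \Delta$ in the cone topology, this forces $\eta$ to be the unique accumulation point of $(\gamma^n(\xi))_n$.

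The main obstacle is that $[o, \xi)$ in general leaves $A$, so the computation in a single apartment is not enough: the ray branches into other apartments, some of which do not contain the axis. To overcome this, I would invoke the continuity of the projection on type-$I$ residues developed in Section \ref{subsec::cone_top_type_I} to transfer the apartment-level estimate across the branchings, and use the assumption on $\xi$ to ensure that the projected simplex is stable under these changes. This uniform control, rather than the one-apartment computation itself, is where the strong $I$-regularity is essentially used: it guarantees that $\sigma^+$ has a neighborhood in the cone topology on $I(\partial_\infty \Delta)$ on which the projection is well-defined and continuous, thereby ruling out oscillation of $(\gamma^n(\xi))_n$ between several candidate limits in $\partial_\infty \Min(\gamma)$.
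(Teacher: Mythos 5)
Your overall shape --- pass to an apartment containing an axis, show that $\gamma^{n}$ forces longer and longer agreement of the ray towards $\gamma^{n}(\xi)$ with a fixed ray of that apartment, and identify the limit via a spherical-building projection --- is the right one, but two essential points are misidentified. First, you project the simplex of $\xi$ onto the residue of the \emph{attracting} simplex $\sigma^{+}$, whereas the relevant datum is the projection onto the residue $\mathcal{R}_{-}$ of the \emph{repelling} simplex: the precise hypothesis of Proposition \ref{prop::dynamics_I_regular} is that $\proj_{\mathcal{R}_{-}}(\mathcal{S})\cap\partial_\infty \Min(\gamma)$ contains an ideal chamber $c_{-}$, and the limit is the retraction $\rho_{\mathcal{A}',c_{-}}(\xi)$ centered at that chamber near the repelling end. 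This is not cosmetic: which point of $\partial_\infty \Min(\gamma)$ the orbit converges to is governed by the relative position of $\xi$ with respect to the repelling side (compare the Bruhat-cell picture on a flag variety), and projecting onto the attracting residue yields neither the correct hypothesis nor a formula for the limit point.

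Second, your way around the fact that $[o,\xi)$ leaves the apartment --- an appeal to ``continuity of the projection on type-$I$ residues developed in Section \ref{subsec::cone_top_type_I}'' --- refers to a result that does not exist there and is not the mechanism. The actual argument chooses an apartment $\mathcal{B}$ with $\xi$ and $c_{-}$ in its boundary, takes a sector $Q\subset\mathcal{A}\cap\mathcal{B}$ pointing towards $c_{-}$ with base point in $\Min(\gamma)$, and lets $q$ be the exit point of $[p,\xi)$ from $Q$. The crux, absent from your sketch, is Lemma \ref{lem::separating_walls}: because $c_{-}$ is the gate $\proj_{\mathcal{R}_{-}}(\mathcal{S})$, none of the walls separating $c_{-}$ from the chamber of $\xi$ is a $W_{I}$-wall, so $q$ does not lie on a wall parallel to the translation direction; only then does the distance from $q$ to the corresponding walls of $\gamma^{n}(Q)$ grow linearly, making the overlap $[p,\gamma^{n}(\xi))\cap\gamma^{n}(Q)$ unbounded and forcing convergence to $\eta$. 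If $q$ sat on a $W_{I}$-wall this overlap would stay bounded and the convergence would fail --- exactly the degenerate oscillation your argument needs to, but does not, rule out.
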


As announced, strongly $I$-regular hyperbolic automorphisms of $\Delta$ are put to work in Section \ref{sec::applications}. One of the intermediate results, before reaching the main goal of this article Theorem \ref{thm::main_goal}, is Proposition \ref{prop::transit_sigma}. Again, to ease the presentation in the introduction we state both of them in lay terms.

\begin{proposition}(See Proposition \ref{prop::transit_sigma}  for the precise statement.)
\label{prop::transit_sigma_1}
Let $\Delta$ be a locally finite Euclidean building, $(W,S)$ the associated finite Coxeter system, and $I \subsetneq S$. Let $\sigma_+,\sigma_-$ be two opposite ideal simplices in $\partial_\infty \Delta$, with $\sigma_+$ of type $I$. Let $\{\gamma_n\}_{n\in \NN}$ be a sequence of strongly $I$-regular hyperbolic elements of $\Delta$ such that $\sigma_+,\sigma_-$ are the attracting and repelling minimal ideal simplices of $\gamma_n$, for every $n\in \NN$. 

Then, under some concrete assumptions, the sequence $\{\gamma_n\}_{n\in \NN}$ limits any open neighborhood $V \subset I_{\sigma_-}(\partial_\infty \Delta)$ of $\sigma_-$ to the set of all ideal simplices opposite $\sigma_+$, i.e. $\lim\limits_{n \to \infty} \gamma_{n}(V)=\Opp(\sigma_+)$.
 \end{proposition}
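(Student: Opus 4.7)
The plan is to prove $\lim\limits_{n\to\infty}\gamma_n(V)=\Opp(\sigma_+)$ as convergence of subsets of $I(\partial_\infty\Delta)$ in the Vietoris/Kuratowski sense, by establishing the two inclusions $\Opp(\sigma_+)\subseteq\liminf_{n}\gamma_n(V)$ and $\limsup_{n}\gamma_n(V)\subseteq\Opp(\sigma_+)$ separately.

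For the $\liminf$ inclusion, I fix $\tau\in\Opp(\sigma_+)$ and take $\xi_n:=\gamma_n^{-1}(\tau)$. Then $\gamma_n(\xi_n)=\tau$ trivially, so the task reduces to proving $\xi_n\to\sigma_-$ in the cone topology on $I(\partial_\infty\Delta)$. Each $\gamma_n^{-1}$ is strongly $I$-regular hyperbolic with attracting simplex $\sigma_-$ and repelling simplex $\sigma_+$, so the point $\tau$, being opposite $\sigma_+$, fits the hypothesis of Proposition \ref{prop::dynamics_I_regular_1} applied to $\gamma_n^{-1}$; this yields $\lim_{k\to\infty}\gamma_n^{-k}(\tau)\in\partial_\infty\Min(\gamma_n^{-1})$, projecting to $\sigma_-$. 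To upgrade this iterative statement to a one-shot statement along $n$, I will use the expected concrete assumption — that the translation lengths $\ell(\gamma_n)\to\infty$ with compatible axes — together with the description of the cone topology on $I(\partial_\infty\Delta)$ from Section \ref{subsec::cone_top_type_I}: inside a common apartment containing the axes of the $\gamma_n$, the map $\gamma_n^{-1}$ decomposes as translation by an essentially unbounded vector in the direction of $\sigma_-$ followed by bounded pieces, and basic cone neighbourhoods of $\sigma_-$ are characterised by how far one translates along such a direction, so a single application of $\gamma_n^{-1}$ sends $\tau$ into any prescribed neighbourhood of $\sigma_-$ as soon as $\ell(\gamma_n)$ is large enough.

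For the reverse inclusion I shrink $V$ so that $V\subseteq\Opp(\sigma_+)$, which is legitimate because $\Opp(\sigma_+)$ is an open neighbourhood of $\sigma_-$ in $I(\partial_\infty\Delta)$. Since each $\gamma_n$ preserves $\sigma_+$ setwise, the set $\Opp(\sigma_+)$ is $\gamma_n$-invariant and hence $\gamma_n(V)\subseteq\Opp(\sigma_+)$ for every $n$. Any accumulation point of $\gamma_n(V)$ therefore lies in $\overline{\Opp(\sigma_+)}$, and combined with the first part this identifies the limit exactly as $\Opp(\sigma_+)$.

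The main obstacle will be the transition from the iterated dynamics of a single element (which is what Proposition \ref{prop::dynamics_I_regular_1} supplies) to the one-application statement along a diverging sequence. Making this rigorous requires uniform quantitative control on how strongly $I$-regular translation drives opposite-$\sigma_+$ points toward $\sigma_-$, expressed in terms of $\ell(\gamma_n)$ and of how the axes of the $\gamma_n$ are positioned relative to one another; identifying exactly which \emph{concrete assumptions} of the full statement encode this uniformity — and accommodating any non-trivial drift of the axes — is where the real work of the proof lies.
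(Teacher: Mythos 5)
Your overall strategy (show every $\tau\in\Opp(\sigma_+)$ is eventually in $\gamma_n(V)$ by proving $\gamma_n^{-1}(\tau)\to\sigma_-$, and get the reverse inclusion from $\gamma_n$-invariance of $\Opp(\sigma_+)$) is sound and is essentially the dual formulation of what the paper does; the ``concrete assumptions'' you anticipate are indeed the ones in the precise statement: a \emph{single} apartment $\mathcal{A}\subset\Min(\gamma_n)$ for all $n$ with $\sigma_\pm\subset\partial_\infty\mathcal{A}$ (so there is no drift of axes to accommodate), and $d(x,\gamma_n(x))\to\infty$ for $x\in\mathcal{A}$. But there is a genuine gap at exactly the point you flag as ``where the real work of the proof lies'': you never actually prove the one-shot statement that a single application of $\gamma_n^{-1}$ drives $\tau$ into a prescribed neighbourhood of $\sigma_-$, and the tool you propose for it, Proposition \ref{prop::dynamics_I_regular_1}, cannot deliver it. That proposition concerns the limit of the \emph{iterates} $\gamma^{-k}(\tau)$ of one fixed element, under projection hypotheses on the residue of $\tau$, and gives no quantitative rate; nothing in it converts into a uniform statement over a sequence $\{\gamma_n\}$ applied once each. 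The paper does not use Proposition \ref{prop::dynamics_I_regular} here at all.

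What closes the gap in the paper is the explicit description of a neighbourhood basis of $\sigma_-$ adapted to a translation axis: for $\ell\subset\mathcal{A}$ an axis with endpoints $\xi_\pm$, $x\in\ell$ and $-r\in(x,\xi_-)$, one takes $V_{x,-r}=\{\xi'_-\in\Opp(\xi_+)\mid [-r,x]\subset[x,\xi'_-)\cap\ell\}$, reduces an arbitrary $V_{\sigma_-}$ to such a set, and observes that $\gamma_n(V_{x,-r})=V_{\gamma_n(x),\gamma_n(-r)}$ with $\gamma_n(-r)\to\xi_+$ along $\ell$ (equivalently, $\ell\cap\gamma_n(Q(x,\sigma_-,\mathcal{A}))$ exhausts $\ell$). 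Since every $\tau\in\Opp(\xi_+)$ determines a line $\ell_\tau$ through $\xi_+$ meeting $\ell$ in a ray $[y_\tau,\xi_+)$, the point $\tau$ lies in $V_{\gamma_n(x),\gamma_n(-r)}$ as soon as $\gamma_n(-r)\in(y_\tau,\xi_+)$ --- this is the quantitative content you are missing, and it is purely a computation in the fixed apartment $\mathcal{A}$ (cf.\ Lemmas \ref{lem::lem11} and \ref{lem::lem12}). Two smaller points: your claim that $\gamma_n^{-1}$ ``decomposes as translation by an unbounded vector followed by bounded pieces'' is unnecessary (and unjustified) once $\mathcal{A}\subset\Min(\gamma_n)$, since $\gamma_n$ then acts on $\mathcal{A}$ as a genuine translation; and ``shrinking $V$'' does not help the $\limsup$ inclusion (a larger $V$ makes that inclusion harder, not easier) --- what saves you is that the precise statement already assumes $V_{\sigma_-}\subset\Opp(\sigma_+)$.
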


Even if at a first glance, the main Theorem \ref{thm::main_goal} appears to hold true under very mysterious assumptions as displayed in Section \ref{subsec::main_goal}, those will become mellow when working with connected reductive groups defined over non-Archimedean local fields. This case will be treated in a forthcoming paper, the next in this series. Ignoring all the wild assumptions, the main result in an idealistic world looks as follows.

\begin{theorem}(See Theorem \ref{thm::main_goal} for the precise statement.)
\label{thm::main_goal_1}
Let $\Delta$ be a locally finite thick Euclidean building with associated finite Coxeter system $(W,S)$, $G$ a closed group of type-preserving automorphisms of $\Delta$ with a strongly transitive action on $\Delta$, and $H$ a closed subgroup of $G$ enjoying further properties. Let $I \subsetneq S$.

Then for any sequence $\{a_n\}_{n \in \NN}$ of strongly $I$-regular hyperbolic elements of $\Delta$ satisfying further properties,  a corresponding Chabauty limit $L$ of $H$ with respect to  $\{a_n\}_{n \in \NN}$ admits the semi-direct product decomposition 
$$ L = U_{I} \rtimes L^{I},$$
where $U_{I}$ is normal in $L$  and the unipotent radical of a specific parabolic subgroup $P_I$ of $G$, and $L^{I} =L \cap M^I$ with $M^I$ the Levi factor of $P_I$.
\end{theorem}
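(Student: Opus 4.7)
The plan is to split the argument into two containments and then combine them via a standard modular-law manipulation. Throughout, I read ``a Chabauty limit $L$ of $H$ with respect to $\{a_n\}$'' as $L = \lim_{n\to \infty} a_n H a_n^{-1}$ in the Chabauty topology on closed subgroups of $G$; concretely, $\ell \in L$ iff there exist $h_n \in H$ with $a_n h_n a_n^{-1} \to \ell$, and every such accumulation point lies in $L$. The two parabolic data needed at the group level are the identifications $P_I = \Stab_G(\sigma_+)$ and $P_I = U_I \rtimes M^I$, which follow from the strongly transitive type-preserving action of $G$.

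\textbf{Step 1: $L \subseteq P_I$.} Take $\ell = \lim_n a_n h_n a_n^{-1} \in L$. I want to show that $\ell$ stabilises the attracting ideal simplex $\sigma_+$. Pick an ideal point $\xi$ in the interior of $\sigma_+$ (or more generally in an $I$-residue as handled in Section \ref{subsect:dynamics-reg}). The strongly $I$-regular dynamics of $a_n^{-1}$ drag $\xi$ to $\sigma_-$, after which the ``further properties'' of $H$ (which in the model case of a fixed-point subgroup keep $h_n$ from running off to infinity transversely to $\Min(a_n)$) place $h_n(a_n^{-1}\xi)$ inside a region on which Proposition \ref{prop::dynamics_I_regular_1} guarantees that $a_n$ transports it back towards $\xi$. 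Passing to the limit, $\ell(\xi) = \xi$, so $\ell$ fixes $\sigma_+$ pointwise and hence $\ell \in P_I$.

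\textbf{Step 2: $U_I \subseteq L$.} This is the heart of the proof. Given $u \in U_I$, I construct $h_n \in H$ with $a_n h_n a_n^{-1} \to u$. Recall that $U_I$ is the pointwise fixator of $\sigma_+$ and acts on $\Opp(\sigma_+)$. Proposition \ref{prop::transit_sigma_1} says that for any fixed neighbourhood $V$ of $\sigma_-$ in $I_{\sigma_-}(\partial_\infty \Delta)$, the images $a_n(V)$ eventually cover any prescribed compact subset of $\Opp(\sigma_+)$. Using the assumptions on $H$ (which, in the intended reductive-group application, make $H$ act ``transversely enough'' on $I$-residues near $\sigma_-$), pick $h_n \in H$ whose action on a suitably large neighbourhood of $\sigma_-$ is conjugated by $a_n$ into an approximation of $u$'s action near $\sigma_+$; continuity of the $G$-action on $\Delta \cup \partial_\infty \Delta$ and the cone topology on $I$-residues then upgrade this to convergence $a_n h_n a_n^{-1} \to u$ in $G$.

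\textbf{Step 3: Assembling the semidirect product.} From Step 1, $L \subseteq P_I = U_I \rtimes M^I$; from Step 2, $U_I \leq L$. Since $U_I$ is normal in $P_I$, it is normal in the subgroup $L$, and Dedekind's modular law yields
\[
L = L \cap (U_I M^I) = U_I \cdot (L \cap M^I) = U_I \rtimes L^I,
\]
with $U_I \cap L^I \leq U_I \cap M^I = \{1\}$. This is exactly the claimed decomposition.

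\textbf{Main obstacle.} Step 1 is essentially a ``large-scale attraction'' argument and Step 3 is bookkeeping; the genuine difficulty is Step 2. To realise a given $u \in U_I$ as a Chabauty limit, one must match the algebraic structure of $U_I$ with the sparse geometric information supplied by Proposition \ref{prop::transit_sigma_1}. This is where the unspecified ``further properties'' of $H$ and of $\{a_n\}$ do all the work: they must ensure that the orbit of $H$ meets the preimages $a_n^{-1}(N)$ of neighbourhoods $N$ of $u \cdot \sigma_-$ (or, after conjugation, of suitable opposite simplices) densely enough to approximate $u$ in the Chabauty topology. I expect the bulk of Section \ref{sec::applications} to be devoted precisely to cleanly formulating these hypotheses so that the approximation scheme of Step 2 can be executed rigorously, while remaining broad enough to specialise easily to reductive groups over non-Archimedean local fields in the follow-up paper.
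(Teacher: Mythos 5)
Your Step 3 is fine and matches the paper's endgame, and your Step 1 is essentially hypothesis (NRP$_{\sigma_+}$) in the precise statement (the paper does not prove $L\leq P_I$; it assumes it, together with the existence of a $\gamma\in T_{\sigma_\pm}$ normalizing $L$). The genuine gap is in Step 2. You propose to realise each $u\in U_I$ as a Chabauty limit by choosing $h_n\in H$ so that $a_nh_na_n^{-1}$ approximates the action of $u$ on a neighbourhood of $\sigma_+$ in the boundary, and then to ``upgrade'' this to convergence $a_nh_na_n^{-1}\to u$ in $G$ by continuity. That upgrade is exactly what fails: convergence of the induced actions on (a compact piece of) $I_{\sigma_-}(\partial_\infty\Delta)$ does not force the sequence $a_nh_na_n^{-1}$ to have any convergent subsequence in $G$, since the topology on $G\leq\Aut(\Delta)$ is the permutation topology on $\Delta$ and the boundary action is far from proper. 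The paper flags precisely this obstruction in Remark \ref{rem::third_prop_conv}: even after Proposition \ref{prop::transit_sigma} produces $g_n\in a_nHa_n^{-1}$ with $g_n(\sigma_-)=\sigma'$ for a prescribed $\sigma'\in\Opp(\sigma_+)$, ``it might be the case that there is no convergent subsequence of $\{g_n\}$ in $G$, and so there will be no element in $L$ sending $\sigma_-$ to $\sigma'$.'' This is why the additional compactness hypothesis (TranP$_{\sigma_-}$) is imposed; without it neither $U_I\leq L$ nor even transitivity of $L$ on $\Opp(\sigma_+)$ can be extracted from the dynamics alone.

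The paper's route around this is genuinely different from yours and never establishes $U_I\leq L$ directly. It uses (TranP$_{\sigma_-}$) plus Proposition \ref{prop::transit_sigma} only to show that $L$ acts \emph{transitively} on $\Opp(\sigma_+)$. It then invokes the Baumgartner--Willis contraction-group machinery (via Proposition \ref{prop::geom_levi_decom} and Corollary \ref{coro::geom_levi_decom}) applied to the normalizing element $\gamma$, obtaining $L=\bigl(L\cap U^{+}_{\gamma}\bigr)L_{\sigma_-}$, where $U^{+}_{\gamma}$ is the \emph{contraction} subgroup of $\gamma$ rather than the root-group radical $U_I$ (the paper notes these need not coincide for general $G$). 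Only under the further hypothesis that $U^{+}_{\gamma}$ acts simple-transitively on $\Opp(\sigma_+)$ does one conclude $L\cap U^{+}_{\gamma}=U^{+}_{\gamma}$ (a transitive subgroup of a simply transitive group is the whole group) and hence the semidirect product. So the ``heart of the proof'' you identified is replaced in the paper by: transitivity of $L$ on $\Opp(\sigma_+)$, a Levi-type decomposition of $L$ relative to $\gamma$, and a simple-transitivity argument; your direct approximation scheme would need substantially more than the stated hypotheses to be carried out.
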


The proof of Theorem \ref{thm::main_goal} uses Proposition \ref{prop::transit_sigma} as well as some of the techniques from \cite{BaWil}.

\medskip
The article is structured in three main sections. The needed background material on groups acting on affine and spherical buildings is given in Section \ref{sec::bg_material}. Beside the general facts related to strongly transitive actions on buildings that are recalled there, Section \ref{sec::bg_material} contains a fair exposition and comparison between the different ways of defining parabolic, contraction and unipotent subgroups. A very short reminder about Chabauty topology is given as well. Section \ref{sec::cone_top} is divided in five subsections. The first two, paving the way towards the wanted dynamics, study stars of ideal simplices, and the cone topology on the set $I(\partial_\infty \Delta)$ of all ideal simplices in $\partial_\infty \Delta$ of a given type $I$. Strongly $I$-regular hyperbolic automorphisms of $\Delta$ are defined and studied in the following two subsections. Their wild dynamical properties are presented in the fifth.  As its name suggests, Section \ref{sec::applications} employes strongly $I$-regular hyperbolic elementes and delivers the desired main goal of the paper.

\label{subsec::apartments_stars}

\subsection*{Acknowledgements}  The author was supported by a research grant (VIL53023) from VILLUM FONDEN.

\section{Background material on groups acting on buildings}
\label{sec::bg_material}

In this article we fix  $\Delta$ to be a locally finite thick Euclidean building of dimension $\geq 1$, thus irreducible. We consider $\Delta$, and all other buildings in this paper, to be endowed with their complete system of apartments. With respect to that, the ideal boundary (also called the visual boundary) of $\Delta$ is denoted by $\partial_{\infty} \Delta$, and this is endowed with a structure of a spherical building. Points of $\Delta$, respectively $\partial_{\infty} \Delta$, will be considered with respect to a geometric realisation of that affine, respectively spherical, building. By \cite[Theorem 11.79]{AB}  the apartments of $\partial_{\infty} \Delta$ are in one to one correspondence with those of $\Delta$.  We denote by $\Aut(\Delta)$ the full group of automorphisms of $\Delta$, and by $\Aut_{0}(\Delta) \leq \Aut(\Delta)$ the subgroup of all \textbf{type-preserving} automorphisms, meaning those automorphisms that preserve a chosen coloring of the vertices of the model chamber and thus of the vertices of the chambers of $\Delta$. 

By a simplex $\sigma$ of $\Delta$, or of $\partial_\infty \Delta$, we mean the interior of it, thus $\sigma$ is considered without its faces, so $\sigma$ is open.  Let $\Ch(\partial_\infty \Delta)$, respectively $\Ch(\partial_\infty  \mathcal{A})$, be the set of all chambers of the spherical building $\partial_\infty \Delta$, respectively spherical apartment $\partial_\infty  \mathcal{A}$. 

For a subgroup $G$ of $\Aut(\Delta)$, and a subset $Y $ of $\Delta$, or of $\partial_\infty \Delta$, we denote
$$\Fix_{G}(Y):=\{ g \in G \; \vert \; g(z)=z, \forall z \in Y \} \text{\; and \; } \Stab_{G}(Y):=\{ g \in G \; \vert \; g(Y)=Y \text{ setwise}\}.$$
 
 Let us fix for what follows a special vertex $x_{0} \in \Delta$ (see~\cite[Sec.~16.1]{Gar97}) and an apartment $\mathcal{A} \subset \Delta$ such that $x_0 \in \mathcal{A}$. In the apartment $\mathcal{A}$ we fix a Weyl chamber $Q(x_0, c_+)\subset \mathcal{A}$, with base point $x_0$. This $Q(x_0, c_+)$ is a sector in $\mathcal{A}$ with base point $x_0$ and having at its infinity the ideal chamber in $\partial_\infty \mathcal{A}$ that will be denoted by $c_+$. The ideal chamber opposite $c_+$ in the apartment $\partial_\infty \mathcal{A}$ will be denoted by $c_-$. The two opposite ideal chambers $c_+,c_-$ of $\partial_\infty \mathcal{A}$ are also fixed for what follows.

\begin{definition}[Stars, roots and boundary walls in buildings]
\label{def::star_roots}
Let $X$ be an affine or spherical building. For a simplex $\sigma \subset X$  let $\st(\sigma, X)$ be the set of all (open) simplices in $X$ that contain  $\sigma$ as a sub-simplex. $\st(\sigma, X)$ is called the \textbf{star of $\sigma$ in $X$}. 

A \textbf{root} $\alpha$ of $X$ is a half-apartment of $X$ and its \textbf{boundary wall} (not the ideal boundary) is denoted by $\partial \alpha$.  Let $\mathcal{A}_{X}(\alpha)$ be the set of all apartments of $X$ that contain $\alpha$ as a half-apartment.  
\end{definition}

\begin{definition}[Positive, negative, simple roots]
\label{def::pos_neg_simple_roots}
For the fixed apartment $\mathcal{A}$, denote by $\Phi$ the set of all roots of $\partial_\infty  \mathcal{A}$, and by $\Phi^{+}$ the roots in $\Phi$ that contain $c_+$ as a chamber; we call $\Phi^{+}$ the set of \textbf{positive roots} with respect to $c_{+}$. Then taking $\Phi^{-}:=\Phi \setminus \Phi^{+}$, one easily notices that $\Phi^{-}$ is the set of all roots of $\Phi$ that contain $c_-$ as a chamber. We call $\Phi^{-}$ the set of \textbf{negative roots} with respect to $c_{+}$.

Denote by $S$ the set of boundary walls associated with the roots in $\Phi^+$ that contain one of the \textbf{panels} of $c_+$. A panel is also called \textbf{facet}, and it is a co-dimension-$1$ simplex of $c_+$. The roots in $\Phi^+$ having their boundary wall in $S$ are called the \textbf{simple roots} of $\partial_\infty  \mathcal{A}$ with respect to $c_+$.
\end{definition}

\begin{definition}[Type of an ideal simplex]
\label{def::type_simplex}
Let $\sigma$ be a simplex in the closure of the ideal chamber $c_+$ of the fixed spherical apartment $\partial_\infty \mathcal{A}$. Denote by 
$$I_\sigma:=\{\partial\alpha \in S  \; \vert \; \sigma \text{ subset of the boundary wall } \partial \alpha, \text{ with  } \alpha \in \Phi^+ \}$$
the subset of boundary walls in $S$ that contain $\sigma$ as a subset of each of those walls.  Notice that $I_{c_+}:=\emptyset$.
The \textbf{type} of $\sigma$ is defined to be $I_{\sigma}$. 
\end{definition}

The \textbf{finite Weyl group W} associated with $\Delta$, and also with $\partial_\infty \Delta$, is the group generated by the reflections of $\partial_\infty \mathcal{A}$ through the panels of $c_+$, thus one can say that $W$ is generated by the elements of $S$. Then it is well known that $(W,S)$ is the finite irreducible Coxeter system of $\Delta$, and also of $\partial \Delta$.

The \textbf{finite Weyl group $W_{I_\sigma}$} corresponding to a type $I_\sigma \subset S$ is the subgroup of $W$ that is generated by the reflections of $\partial_\infty \mathcal{A}$ through the boundary walls from $I_\sigma$.

As every chamber of $\partial_\infty \Delta$ is a genuine copy of the fixed ideal chamber $c_+$, the \textbf{type of a simplex $\gamma$} in $\partial_\infty \Delta$ is defined to be the type $I \subset S$ of the corresponding simplex in the model chamber $c_+$. Then for a type $I \subset S$ we take 
 
$$I(\partial_\infty \Delta) :=\{ \gamma \text{ simplex  of } \partial_\infty \Delta \text{ of type } I\}.$$  If $\sigma$ is the ideal chamber $c_+$, then $I_{c_+}(\partial_\infty \Delta)=\Ch(\partial_\infty \Delta).$

Notice, any two ideal simplices $\sigma$ and $\sigma'$ in $\partial_\infty \Delta$ that are opposite might not have the same type with respect to the finite Weyl group $(W,S)$, but they have the same type with respect to the walls of the affine building, thus with respect to the affine Weyl group of $\Delta$. Sometimes we will abuse this notation and meaning, but it will be clear from the context.

\begin{definition}[$I$-sector in an affine building]
Let $X$ be an affine building, and $\sigma$ be an ideal simplex in $\partial_\infty X$ of type $I \subset S$. An \textbf{$I$-sector} in $X$ is the open cone of dimension $\dim(\sigma)+1$, with base point some $x \in X$, and whose boundary at infinity is exactly $\sigma$. An ${\emptyset}$-sector in $X$ corresponds exaclty to a Weyl chamber, also called Weyl sector. 
\end{definition}

The \textbf{affine Weyl group} associated with the affine building $\Delta$ is defined to be 
$$W^{\text{\tiny{aff}}}:=\Stab_{\Aut_{0}(\Delta)}( \mathcal{A}) / \Fix_{\Aut_{0}(\Delta)}( \mathcal{A}).$$
Again, we take $S^{\text{\tiny{aff}}}$ to be the set of the reflections of $ \mathcal{A}$ through the walls of a chamber in $\Ch( \mathcal{A})$ chosen in advanced. Then, it is known that $W^{\text{\tiny{aff}}}$ is generated by $S^{\text{\tiny{aff}}}$, and that $(W^{\text{\tiny{aff}}},S^{\text{\tiny{aff}}})$ is the affine irreducible Coxeter system of $\Delta$. 
Another well-known fact is that $W^{\text{\tiny{aff}}}$ contains a maximal abelian normal subgroup $T$ isomorphic to $\mathbb{Z}^{m}$, whose elements are Euclidean translation automorphisms of the apartment $ \mathcal{A}$, and where $m$ is the Euclidean dimension of the affine apartment $ \mathcal{A}$. The elements in $T$ are induced by hyperbolic automorphisms of $\Delta$. In particular, every element of $T$ can be lifted (not in a unique way) to a hyperbolic element in $\Aut_{0}(\Delta)$. A lift of $T$ in $\Aut_{0}(\Delta)$ is not abelian in general.

\subsection{General facts on strong transitivity}

In this section we consider $G$ to be a closed subgroup of $\Aut_{0}(\Delta)$.

For an ideal simplex $\sigma$ of $\partial_{\infty} \Delta$ we take
$$G_{\sigma}:= \Fix_{G}(\sigma)= \{ g \in G \; \vert \; g(\sigma)=\sigma \text{ pointwise}\}$$ and call it the \textbf{parabolic subgroup of $G$ associated with $\sigma$}. Notice $G_{\sigma}$ is closed in $G$, and since $G$ is type-preserving we have that $G_{\sigma} =\Fix_{G}(\sigma)= \Stab_{G}(\sigma)$. 

The subgroup $G_{\sigma}$ is maximal when $\sigma$ is an ideal vertex in the spherical building $\partial_{\infty} \Delta$, and minimal when $\sigma$ is an ideal chamber in the spherical building $\partial_{\infty} \Delta$. 

When $\sigma=c$ we call $B(G):= G_{c}$ to be a \textbf{Borel subgroup} of $G$. We also take 
$$B^0(G):=\{ g \in G \; \vert \; g(c)=c, \; g \text{ elliptic}\}.$$

A \textbf{good maximal compact} subgroup of $G$ is the stabilizer $\Stab_{G}(x):=\{ g \in G \; \vert \; g(x)=x\}$ of a special vertex $x$ of $\Delta$.   

\begin{definition}[Strong transitivity]
We say that a closed subgroup $G$ of $\Aut_{0}(\Delta)$ acts \textbf{strongly transitively} on $\Delta$ (respectively $\partial_{\infty} \Delta$), if for any two pairs $(\mathcal{A}_1,c_1 )$ and $(\mathcal{A}_2 , c_2)$, consisting each of an apartment $\mathcal{A}_i$ of $\Delta$ (respectively $\partial_{\infty} \Delta$), and a chamber $c_i \in  \Ch(\mathcal{A}_i)$, there exists $g \in G$ such that $g(\mathcal{A}_1) = \mathcal{A}_2$ and $g(c_1) = c_2$.
\end{definition}

It is well known that any semi-simple algebraic group over a non-Archimedean local field acts by type-preserving automorphisms and strongly transitively on its associated Bruhat--Tits building.

When the closed subgroup $G \leq \Aut_{0}(\Delta)$ acts on $\Delta$ strongly transitively, it is a known, easy-to-see, fact that $G$ induces a strongly transitively action on the visual boundary $\partial_\infty \Delta$, see \cite[17.1]{Gar97}. Then in this case a Borel subgroup $B(G)$ of $G$ is unique, up to conjugation in $G$.

For the fixed apartment $ \mathcal{A}$ of $\Delta$, the special vertex $x_0 \in  \mathcal{A}$, and the ideal chamber $c_+ \in \Ch(\partial_\infty  \mathcal{A})$, let $$K(G):=\Stab_G(x_0) \; \text{ and } \; B(G)^{+}:= G_{c_+},$$
and $T(G)$ be a lift in $\Stab_G(\mathcal{A})$ of the maximal abelian normal subgroup $T$ of the affine Weyl group $W^{\text{\tiny{aff}}}$ of $\Delta$. Also, since $T(G)$ acts by translations on the apartment $\mathcal{A}$, one can see that it fixes pointwise the ideal apartment $\partial_\infty \mathcal{A}$, and so $T(G) \leq G_{c_+} \cap G_{c_-}$.  Notice that all those closed subgroup of $G$ depend on the choice of the apartment $ \mathcal{A}$, the special vertex $x_0 \in  \mathcal{A}$ and the ideal chamber $c_+ \in \Ch(\partial_\infty  \mathcal{A})$.  In particular, the decompositions below also depend on the choice of $ \mathcal{A}, x_0$ and $c_+$.  For simplicity, in the next two results we just take $K= K(G), B=B(G)^{+}$, and $T= T(G)$.

\begin{lemma}(\textbf{Cartan decomposition}, for a proof see~\cite[Lemma~4.7]{Cio})
\label{lem::polar_decom}
Let $G$ be closed, strongly transitive and type-preserving subgroup of $\Aut_0(\Delta)$. Then $G=K T K$.
\end{lemma}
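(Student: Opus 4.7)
The plan is to prove $g \in KTK$ for every $g \in G$ by tracking the action on the special vertex $x_0$; the goal is to construct $k \in K$ and $t \in T(G)$ so that $t^{-1}kg$ fixes $x_0$, which will then yield $g = k^{-1} t (t^{-1}kg) \in K \cdot T(G) \cdot K$. Since $G$ is type-preserving, $y := g(x_0)$ is a special vertex of $\Delta$ of the same type as $x_0$, and the whole argument will amount to moving $y$ back to $x_0$ in two stages: first into the fixed apartment $\mathcal{A}$ using an element of $K$, then across $\mathcal{A}$ using an element of $T(G)$.

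First I would transport $y$ into $\mathcal{A}$. The building axioms provide an apartment $\mathcal{A}'$ containing both $x_0$ and $y$. I would then pick chambers $c' \subset \mathcal{A}'$ and $c \subset \mathcal{A}$ each having $x_0$ as one of its vertices, and apply strong transitivity of $G$ on (apartment, chamber) pairs to obtain $k \in G$ sending $(\mathcal{A}', c')$ onto $(\mathcal{A}, c)$. Type-preservation forces $k(x_0) = x_0$: indeed, $k$ must send the unique vertex of $c'$ of the type of $x_0$ to the unique vertex of $c$ of that same type, and by construction both are $x_0$ itself. Consequently $k \in K$ and $k(y) \in \mathcal{A}$.

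Next I would translate $k(y)$ back to $x_0$ using $T(G)$. The point $k(y)$ is a special vertex of $\mathcal{A}$ of the same type as $x_0$, and the translation lattice $T \leq W^{\text{\tiny{aff}}}$ acts transitively on the set of special vertices of $\mathcal{A}$ of any prescribed type (a standard structural fact: the $T$-orbits on the vertex set of $\mathcal{A}$ are precisely the type classes of special vertices). Lifting to $T(G)$, I obtain $t \in T(G)$ whose induced translation on $\mathcal{A}$ satisfies $t(x_0) = k(y)$. Setting $k_2 := t^{-1} k g$, one computes $k_2(x_0) = t^{-1}(k(g(x_0))) = t^{-1}(t(x_0)) = x_0$, so $k_2 \in K$, and therefore $g = k^{-1} t k_2 \in K \cdot T(G) \cdot K$, as required. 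The only mildly delicate point is this transitivity of $T$ on type classes of special vertices, which is exactly where both the special-vertex choice defining $K$ and the type-preserving hypothesis on $G$ are genuinely used; everything else is a formal consequence of strong transitivity combined with the building axioms.
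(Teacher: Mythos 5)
Your argument is correct and is essentially the standard proof of the Cartan decomposition that the paper delegates to the cited reference \cite[Lemma~4.7]{Cio}: move $g(x_0)$ into the fixed apartment by an element of $K$ obtained from strong transitivity applied to chambers at $x_0$ (type-preservation pinning $x_0$), then use that the translation lattice acts transitively on the special vertices of $\mathcal{A}$ of the type of $x_0$ — a fact the paper itself invokes in the proof of Lemma~\ref{existance_reg_element} — to finish with an element of $T(G)$. No gaps.
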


\begin{lemma}(\textbf{Iwasawa decomposition}, for a proof see~\cite[p.~14]{Cio})
\label{lem::Iwasawa_decom}
Let $G$ be a closed, strongly transitive and type-preserving subgroup of $\Aut(\Delta)$,  that is not necessarily closed. Then $G=K B$ and $B=TB^0$.
\end{lemma}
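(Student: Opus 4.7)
My plan is to establish the two decompositions in turn, each by an apartment-and-sector comparison argument fed by strong transitivity.

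For $G = KB$, I would reduce to showing that $K$ acts transitively on $\Ch(\partial_\infty \Delta)$: once this is known, any $g \in G$ can be written as $g = k(k^{-1}g)$ with $k \in K$ chosen so that $k(c_+) = g(c_+)$, giving $k^{-1}g \in B$. To produce such a $k$ for a given ideal chamber $c \in \Ch(\partial_\infty \Delta)$, I first use the standard affine-building axiom to find an apartment $\mathcal{A}'$ of $\Delta$ containing $x_0$ and having $c$ as an ideal chamber of $\partial_\infty \mathcal{A}'$ (this comes from the fact that the sector $Q(x_0, c)$ based at the special vertex $x_0$ toward $c$ exists and is contained in some apartment). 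Then I would choose germ chambers $c_1 \in \Ch(\mathcal{A})$ with $x_0 \in c_1 \subset Q(x_0, c_+)$ and $c_2 \in \Ch(\mathcal{A}')$ with $x_0 \in c_2 \subset Q(x_0, c)$. Strong transitivity of $G$ on $\Delta$ gives $k \in G$ with $k(\mathcal{A}) = \mathcal{A}'$ and $k(c_1) = c_2$. Since $k$ is type-preserving and $x_0$ is the special-type vertex of both $c_1$ and $c_2$, necessarily $k(x_0) = x_0$, so $k \in K$; and because $k$ then maps the unique sector based at $x_0$ in $\mathcal{A}$ containing $c_1$ to the unique such sector in $\mathcal{A}'$ containing $c_2$, passing to ideal boundaries yields $k(c_+) = c$.

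For $B = TB^0$, I would mimic the sector-comparison idea, now using that every $b \in B$ fixes $c_+$. Applying the classical common-sub-sector property to the sectors $Q := Q(x_0, c_+)$ and $b^{-1}(Q)$, both of which have $c_+$ as ideal chamber, there exists a sub-sector $Q' \subset Q \cap b^{-1}(Q)$. Pick inside $Q'$ a special vertex $y$ of the same type as $x_0$. Then $y \in Q \subset \mathcal{A}$ and $b(y) \in b(Q') \subset Q \subset \mathcal{A}$ are both special vertices of $\mathcal{A}$ of the same type (as $b$ is type-preserving), so there is a unique translation $t \in T$ with $t(y) = b(y)$, using that $T$ realizes the full translation lattice of $W^{\text{\tiny{aff}}}$ acting on $\mathcal{A}$. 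Then $t^{-1}b$ fixes $y$, is therefore elliptic, and lies in $B$ since $t \in T \subset B$ and $b \in B$; hence $t^{-1}b \in B^0$ and $b = t(t^{-1}b) \in T B^0$.

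The main obstacle I expect is the clean verification that the element $k$ supplied by strong transitivity in the first part really does fix $x_0$ and send $c_+$ to $c$; both conclusions become automatic once $c_1, c_2$ are taken as the germ chambers of the relevant sectors at $x_0$, but the setup has to be arranged carefully so the type-preserving hypothesis can do its work and so that $c_1$ (resp.\ $c_2$) sits inside the correct Weyl chamber at $x_0$. The sector-comparison step in the second part is otherwise routine, the only subtle point being that the translation $t$ genuinely lies in the lift $T$ rather than just in $W^{\text{\tiny{aff}}}$, which is ensured because $T$ is fixed once and for all as a lift of the whole translation subgroup.
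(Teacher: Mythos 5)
The paper gives no proof of this lemma, only a citation to \cite{Cio}, and your argument is the standard one found there: $G=KB$ via transitivity of $K$ on ideal chambers obtained from strong transitivity on pairs (apartment, germ chamber at $x_0$), and $B=TB^0$ via the common-subsector argument together with simple transitivity of the translation lattice on special vertices of a fixed type. Both halves are correct as written, including the two delicate points you flag ($k(x_0)=x_0$ follows from type-preservation applied to the germ chambers, and the translation moving $y$ to $b(y)$ lifts to $T$ by the standing choice of $T$ as a lift of the full translation subgroup of $W^{\text{\tiny{aff}}}$).
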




\subsection{Parabolics, contraction subgroups, and unipotent radicals}
\label{subsec::para_cont_unit}
We recall now the definitions for the geometric counterpart of parabolic, and the contraction subgroups from \cite{BaWil} when $G$ is a closed, type-preserving subgroup of $\Aut(\Delta)$, and $\gamma \in \Stab_G(\mathcal{A})$ is a hyperbolic automorphism of $\Delta$. One can define:

\begin{equation} 
\label{equ::parab_subgroup}
P^{+}_{\gamma}:=\{ g \in G \; \vert \; \{\gamma^{-n}g \gamma^{n}\}_{n \in \mathbb{N}} \text{ is bounded}\}. 
\end{equation} Following \cite[Section~3]{BaWil}, $P^{+}_{\gamma}$ is a closed subgroup of $G$. One considers also the subgroup
\begin{equation}
\label{equ::contr_subgroup}
U^{+}_{\gamma}:=\{ g \in G \; \vert \; \lim_{n \to \infty }\gamma^{-n}g \gamma^{n}=e\}. 
\end{equation}
In the same way, but using $\gamma^{n}g \gamma^{-n}$, we define $P^{-}_{\gamma}$, and $U^{-}_{\gamma}$.
By \cite[Section~3]{BaWil}, $P^{+}_{\gamma}$ and $U^{+}_{\gamma}$ are called the \textbf{positive parabolic}, respectively the \textbf{positive contraction}, subgroups associated with $\gamma$. In general $U^{+}_{\gamma}$ is not closed. Moreover, from the algebraic point of view, we have the following \textbf{Levi decomposition}, stated here in the particular case of locally compact groups acting on affine buildings:

\begin{theorem}(See \cite[Proposition~3.4, Corollary~3.17]{BaWil})
\label{thm::levi_decom}
Let $G$ be a closed, type-preserving subgroup of $\Aut(\Delta)$, and $\gamma \in \Stab_G(\mathcal{A})$ a hyperbolic automorphism of $\Delta$.
Then $P^{+}_{\gamma}= U^{+}_{\gamma} M_\gamma$, where $M_{\gamma}:= P^{+}_{\gamma} \cap P^{-}_{\gamma}$. Moreover, $U^{+}_{\gamma}$ is normal in $P^{+}_{\gamma}$.
\end{theorem}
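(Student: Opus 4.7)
The plan is to realize the statement as a special case of the Baumgartner--Willis structure theory for contraction groups of continuous automorphisms of totally disconnected locally compact (t.d.l.c.) groups. Since $\Delta$ is locally finite, $\Aut(\Delta)$ with the compact-open topology is t.d.l.c., and its closed subgroup $G$ inherits this structure. Conjugation by $\gamma$ yields a continuous automorphism $\alpha\colon G \to G$, $\alpha(g)=\gamma g\gamma^{-1}$; under this identification $P^{+}_{\gamma}$ is the parabolic and $U^{+}_{\gamma}$ the contraction subgroup associated with $\alpha^{-1}$, while $P^{-}_{\gamma}$ and $U^{-}_{\gamma}$ correspond to $\alpha$, and $M_\gamma = P^{+}_{\gamma}\cap P^{-}_{\gamma}$ plays the role of the Levi subgroup.

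First I would establish normality of $U^{+}_{\gamma}$ in $P^{+}_{\gamma}$. For $u \in U^{+}_{\gamma}$ and $p \in P^{+}_{\gamma}$, factor
\[
\alpha^{-n}(pup^{-1}) \;=\; \alpha^{-n}(p)\,\alpha^{-n}(u)\,\alpha^{-n}(p)^{-1}\eqstop
\]
The middle factor tends to $e$ while the outer factors remain in a common compact subset $K \subset G$ by the defining property of $P^{+}_{\gamma}$ together with the fact that bounded sets in t.d.l.c.\ groups are relatively compact. A tube-lemma argument applied to the continuous conjugation map $K \times G \to G$ produces, for any compact open neighbourhood $V$ of $e$, a smaller open $V' \ni e$ with $KV'K^{-1} \subseteq V$; applied to the above sequence this forces $pup^{-1} \in U^{+}_{\gamma}$.

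The core of the theorem is the factorization $P^{+}_{\gamma} = U^{+}_{\gamma} M_\gamma$. Following \cite{BaWil}, I would invoke the existence of an $\alpha$-\emph{tidy} compact open subgroup $V \leq G$, admitting a decomposition $V = V_{+}V_{-}$ with $\alpha(V_{+}) \supseteq V_{+}$ and $\alpha^{-1}(V_{-}) \supseteq V_{-}$. In the present building setting a natural candidate is the pointwise stabilizer in $G$ of a sufficiently large bounded neighbourhood of $\Min(\gamma)$ inside the apartment $\mathcal{A}$, with $V_{\pm}$ induced by the two half-apartments of $\mathcal{A}$ determined by the translation axis of $\gamma$. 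Given $p \in P^{+}_{\gamma}$, the boundedness of $\{\alpha^{-n}(p)\}$ confines it to finitely many cosets of $V$, so a subsequence $\alpha^{-n_k}(p)$ converges to some $m'$; applying $\alpha^{n_k}$ and passing to the limit produces a factorization $p = um$ with $u \in U^{+}_{\gamma}$ and $m$ a candidate element of $M_\gamma$.

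The main obstacle will be twofold: first, verifying that the geometrically chosen $V$ really is tidy for $\alpha$, which requires understanding how $\gamma$ moves horospheres transverse to $\Min(\gamma)$ and using the translation length of $\gamma$ to secure the inclusions $\alpha(V_{\pm}) \supseteq V_{\pm}$ after finitely many steps; and second, showing that the limit $m$ obtained above genuinely lies in $P^{-}_{\gamma}$ and not merely in $P^{+}_{\gamma}$, which uses the symmetric geometric role played by the two endpoints of the axis of $\gamma$. Once these are secured, the proof concludes with routine bookkeeping: $M_\gamma$ is closed as an intersection of closed subgroups, and combined with the normality established above this yields the factorization as a normal product, exactly as in \cite[Prop.~3.4, Cor.~3.17]{BaWil}.
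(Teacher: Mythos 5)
The paper offers no proof of this statement at all: it is imported verbatim from \cite{BaWil}*{Proposition 3.4, Corollary 3.17}, so the only thing to measure your sketch against is that source. Your overall strategy --- view conjugation by $\gamma$ as a continuous automorphism $\alpha$ of the totally disconnected locally compact group $G$ and run the Baumgartner--Willis theory --- is exactly the intended one, and your normality argument is complete and correct: it is the standard compactness estimate by which \cite{BaWil} proves that the contraction group is normal in the parabolic. (Two small points worth making explicit: ``bounded'' in the definition of $P^{+}_{\gamma}$ must be read as ``relatively compact'', and \cite{BaWil}*{Corollary 3.17} carries a metrizability hypothesis, which holds here because $\Delta$ is locally finite and hence $\Aut(\Delta)$ is second countable.)

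On the factorization half there is one step that, as written, would fail. If $\alpha^{-n_k}(p)\to m'$, then ``applying $\alpha^{n_k}$ and passing to the limit'' does not produce a factorization $p=um$: the element $p(m')^{-1}$ need not lie in $U^{+}_{\gamma}$, because $\alpha^{-n}(m')$ is merely bounded and need not converge back to $m'$. The actual route in \cite{BaWil} is: first show that every accumulation point $m'$ of the orbit $\{\alpha^{-n}(p)\}$ lies in $M_{\gamma}$, since each $\alpha^{m}(m')$, $m\in\ZZ$, is again an accumulation point of that orbit and hence the \emph{two-sided} orbit of $m'$ is bounded (this also disposes of your second listed ``obstacle''); then conclude from \cite{BaWil}*{Lemma 3.9(3)} that $p$ lies in the contraction group relative to $M_{\gamma}$, and apply \cite{BaWil}*{Theorem 3.8} to identify that relative contraction group with $U^{+}_{\gamma}M_{\gamma}$. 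This is precisely the chain of citations the paper itself rehearses later in the proof of Theorem \ref{thm::main_goal}. Finally, your plan to construct a tidy subgroup geometrically from a neighbourhood of $\Min(\gamma)$ in $\mathcal{A}$ is unnecessary labour: Willis's tidying procedure supplies tidy subgroups for an arbitrary automorphism of a totally disconnected locally compact group, and \cite{BaWil} uses only their abstract existence. Whether your particular candidate is tidy is not clear as described --- tidiness concerns the subgroups $V_{\pm}=\bigcap_{n\geq 0}\alpha^{\mp n}(V)$ and the conditions $V=V_{+}V_{-}$ and closedness of $\bigcup_{n\geq 0}\alpha^{n}(V_{+})$, not a half-apartment decomposition --- but in any case nothing in the theorem requires you to exhibit one.
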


In general, the product $P^{+}_{\gamma}= U^{+}_{\gamma} M_\gamma$ might not be semidirect or direct.
We also have the following results from \cite{Cio}. 

\begin{proposition}(See \cite[Proposition 4.15]{Cio})
\label{prop::geom_levi_decom}
Let $G$ be a closed, non-compact and type-preserving subgroup of $\Aut(\Delta)$. Let $\mathcal{A}$ be an apartment in $\Delta$ and assume there exists a hyperbolic automorphism $\gamma \in \Stab_{G}(\mathcal{A})$. Denote the attracting and repelling endpoints of $\gamma$ by $\xi_+, \xi_- \in \partial_\infty \mathcal{A}$, and let $\sigma_+$, respectively $\sigma_-$, be the unique simplex in $\partial_\infty \mathcal{A}$ such that $\xi_+$, respectively $\xi_-$, is contained in the interior of $\sigma_+$, respectively $\sigma_-$. 

Then $P^{+}_{\gamma} = G_{\sigma_+}= G_{\xi_+}$. In particular, we obtain that $P^{+}_{\gamma} \cap P^{-}_{\gamma} = \Fix_G(\{\xi_-, \xi_+\})= \Fix_G(\{\sigma_-, \sigma_+\})$. 
\end{proposition}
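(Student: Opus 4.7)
The plan is to prove the equalities $P^{+}_{\gamma}=G_{\xi_+}=G_{\sigma_+}$ by establishing separately the equality $G_{\sigma_+}=G_{\xi_+}$ and the two inclusions $P^{+}_{\gamma}\subseteq G_{\xi_+}$ and $G_{\xi_+}\subseteq P^{+}_{\gamma}$. The first equality is largely formal: the inclusion $G_{\sigma_+}\subseteq G_{\xi_+}$ is immediate since $\xi_+\in\sigma_+$ and elements of $G_{\sigma_+}$ fix $\sigma_+$ pointwise; conversely if $g(\xi_+)=\xi_+$, then $g(\sigma_+)$ is an ideal simplex of $\partial_\infty\Delta$ containing $\xi_+$ in its interior, so by the uniqueness of $\sigma_+$ we obtain $g(\sigma_+)=\sigma_+$ setwise, and type-preservation upgrades this to pointwise fixation.

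For $P^{+}_{\gamma}\subseteq G_{\xi_+}$ I would use the cone topology. Take $g\in P^{+}_{\gamma}$; by definition $\{\gamma^{-n}g\gamma^n\}$ lies in a compact subset of $G$, so evaluating at the special vertex $x_0$ shows that $d(g\gamma^n(x_0),\gamma^n(x_0))=d(\gamma^{-n}g\gamma^n(x_0),x_0)$ is uniformly bounded in $n$. Since $\xi_+$ is the attracting endpoint of the $\gamma$-axis inside $\mathcal{A}$, we have $\gamma^n(x_0)\to\xi_+$ in the cone topology, and the fact that the two sequences stay at bounded distance forces $g\gamma^n(x_0)\to\xi_+$ as well (distances grow to infinity while angular displacement at any base point shrinks). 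On the other hand, automorphisms of $\Delta$ act continuously on $\Delta\cup\partial_\infty\Delta$, so $g\gamma^n(x_0)\to g(\xi_+)$; uniqueness of the limit then yields $g(\xi_+)=\xi_+$.

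The content-heavy inclusion $G_{\xi_+}\subseteq P^{+}_{\gamma}$ is where the CAT(0) geometry enters. Pick $g\in G_{\xi_+}$ and let $\bar{x}_0$ be the orthogonal projection of $x_0$ onto a translation axis of $\gamma$ inside $\mathcal{A}$; then $\gamma^n(\bar{x}_0)$ moves at constant speed along this axis towards $\xi_+$. Consider the two unit-speed geodesic rays $r,r'\colon[0,\infty)\to\Delta$ issuing from $\bar{x}_0$ and $g\bar{x}_0$ and both asymptotic to $\xi_+=g(\xi_+)$: the CAT(0) comparison makes $t\mapsto d(r(t),r'(t))$ convex and bounded, hence non-increasing, giving $d(\gamma^n\bar{x}_0,g\gamma^n\bar{x}_0)\le d(\bar{x}_0,g\bar{x}_0)$ for every $n$. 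The triangle inequality then bounds $d(\gamma^{-n}g\gamma^n(x_0),x_0)$ uniformly by $2d(x_0,\bar{x}_0)+d(\bar{x}_0,g\bar{x}_0)$, so the orbit $\{\gamma^{-n}g\gamma^n(x_0)\}$ is confined to a ball around $x_0$, which by local finiteness of $\Delta$ contains only finitely many vertices of the same type as $x_0$. Properness of the $G$-action on $\Delta$ then promotes this to boundedness of $\{\gamma^{-n}g\gamma^n\}$ inside $G$, yielding $g\in P^{+}_{\gamma}$. The ``in particular'' statement follows by intersecting these identities with the analogous ones obtained by replacing $\gamma$ with $\gamma^{-1}$, whose attracting endpoint is $\xi_-\in\sigma_-$. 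The main foreseeable obstacle is the last step: one must verify that the two rays $r,r'$ to $\xi_+$ are genuinely asymptotic in the Euclidean building and that the non-increasing-distance property applies, which ultimately relies on the completeness of $\Delta$ as a CAT(0) space and on $g$ being an automorphism (hence sending geodesic rays to geodesic rays).
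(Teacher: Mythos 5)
Your argument is correct, and it is the standard proof of this fact; note that the paper itself does not reprove this proposition but imports it from \cite{Cio}*{Proposition 4.15}, where essentially the same reasoning is used. All three steps hold up: the equality $G_{\xi_+}=G_{\sigma_+}$ from minimality of $\sigma_+$ plus type-preservation, the inclusion $P^{+}_{\gamma}\subseteq G_{\xi_+}$ from bounded displacement together with $\gamma^{n}(x_0)\to\xi_+$ and continuity on $\Delta\cup\partial_\infty\Delta$, and the converse inclusion from convexity of $t\mapsto d(r(t),r'(t))$ for asymptotic rays combined with properness of the action of the closed subgroup $G$ on the locally finite complex $\Delta$ (so that a bounded orbit of $x_0$ implies relative compactness of $\{\gamma^{-n}g\gamma^{n}\}$).
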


The geometric Levi decomposition is as follows. 
\begin{corollary}(See \cite[Corollary 4.17]{Cio})
\label{coro::geom_levi_decom}
Let $G$ be a closed and type-preserving subgroup of $\Aut(\Delta)$. Let $\mathcal{A}$ be an apartment in $\Delta$ and assume there exists a hyperbolic automorphism $\gamma \in \Stab_{G}(\mathcal{A})$.  Denote the attracting endpoint of $\gamma$ by $\xi_+ \in \partial_\infty \mathcal{A}$. Let $\sigma$ be a simplex in $\partial_\infty \mathcal{A}$ such that $\xi_+$ is contained in the closure of $\sigma$ (thus $\sigma$ may not be the minimal one).  Let also $c \in \Ch(\partial_\infty \mathcal{A})$ with $\xi_+$ in the closure of $c$.

Then $G_{\sigma}= U^{+}_{\gamma} (M_\gamma \cap G_{\sigma})$, where $M_{\gamma}:= P^{+}_{\gamma} \cap P^{-}_{\gamma}$. In particular,  $G_{c}^{0}= U^{+}_{\gamma} (M_\gamma \cap G_{c}^{0})$. In addition, $U^{+}_{\gamma} $ is normal in $G_{c}^{0}$, respectively $G_{\sigma}$, with $U^{+}_{\gamma} \cap M_\gamma$ compact subgroup.
\end{corollary}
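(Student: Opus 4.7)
The plan is to descend the algebraic Levi decomposition $P^{+}_{\gamma} = U^{+}_{\gamma} M_{\gamma}$ from Theorem~\ref{thm::levi_decom} and the geometric identification $P^{+}_{\gamma} = G_{\sigma_+} = G_{\xi_+}$ from Proposition~\ref{prop::geom_levi_decom} to the subgroup $G_\sigma$. Since $\xi_+$ lies in the closure of $\sigma$, the minimal ideal simplex $\sigma_+$ containing $\xi_+$ in its interior is a face of $\sigma$. By type-preservation and continuity on $\Delta \cup \partial_\infty \Delta$, the elements of $G_\sigma=\Fix_G(\sigma)$ fix $\sigma$ and its closure pointwise, and in particular fix $\sigma_+$. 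Thus $G_\sigma \leq G_{\sigma_+} = P^{+}_{\gamma}$, and analogously $G_c^0 \leq G_c \leq P^{+}_{\gamma}$.

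The decomposition will follow from the key claim that every $g \in U^{+}_{\gamma}$ fixes pointwise every ideal simplex $\tau$ of $\partial_\infty \Delta$ whose closure contains $\xi_+$; in particular $U^{+}_{\gamma} \leq G_\sigma$. Granting the claim, for $g \in G_\sigma \leq P^{+}_{\gamma}$ the Levi factorization $g = um$ satisfies $u \in U^{+}_{\gamma} \leq G_\sigma$, hence $m = u^{-1}g \in M_\gamma \cap G_\sigma$, which yields $G_\sigma = U^{+}_{\gamma}(M_\gamma \cap G_\sigma)$. Normality of $U^{+}_{\gamma}$ in $G_\sigma$ is inherited from its normality in $P^{+}_{\gamma}$, since $G_\sigma$ is a subgroup of $P^{+}_{\gamma}$ that contains $U^{+}_{\gamma}$.

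To establish the key claim, one exploits that $\gamma \in \Stab_G(\mathcal{A})$ acts on $\mathcal{A}$ as a Euclidean translation by a vector $v$ in the direction of $\xi_+$, and therefore fixes $\partial_\infty \mathcal{A}$ pointwise. The convergence $\gamma^{-n} g \gamma^n \to e$ in the compact-open topology of $\Aut(\Delta)$ means that for every compact $K \subset \Delta$ and all $n$ large enough, $g$ fixes $\gamma^n(K)$ pointwise. Given $\zeta \in \tau$, one selects compact truncations $K_R$ of a sector of direction $\tau$ emanating from $x_0$ inside $\mathcal{A}$, then chooses $R$ and $n$ growing at compatible relative rates so that $\gamma^n(K_R)$ contains points $p = nv + q$ with $q \in K_R$ whose direction from $x_0$ approximates that of $\zeta$ rather than $\xi_+$. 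This produces a sequence of $g$-fixed points in $\mathcal{A}$ converging to $\zeta$ in the cone topology, and the continuity of $g$ on $\Delta \cup \partial_\infty \Delta$ forces $g(\zeta) = \zeta$.

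The statement for $G_c^0$ follows by applying the general decomposition with $\sigma = c$ and noting that every $u \in U^{+}_{\gamma}$ is elliptic: the bound $d(u(\gamma^n y), \gamma^n y) \to 0$ (obtained by applying the isometry $\gamma^n$ to the contraction relation) forces the translation length of $u$ on $\Delta$ to vanish, and no locally finite thick Euclidean building admits parabolic isometries. Hence $u \in U^{+}_{\gamma} \cap G_c \subseteq G_c^0$, so $m = u^{-1} g \in M_\gamma \cap G_c^0$ whenever $g \in G_c^0$. Compactness of $U^{+}_{\gamma} \cap M_\gamma$ is a standard consequence of the theory of contraction subgroups in totally disconnected locally compact groups: an element whose $\gamma$-conjugates simultaneously contract and remain bounded lies in a relatively compact set. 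The main obstacle is the key claim itself: although $\gamma$ acts trivially on $\partial_\infty \mathcal{A}$, one must nevertheless extract fixing information for $g$ at points of $\sigma$ distinct from $\xi_+$, which requires a delicate interplay of the cone topology, the sector structure in $\mathcal{A}$, and the rate at which the compact-open convergence $\gamma^{-n} g \gamma^n \to e$ fixes larger and larger compact pieces of the building.
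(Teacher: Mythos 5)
Your overall architecture is sound and is essentially the route the cited result takes: reduce everything to the single claim that $U^{+}_{\gamma}$ fixes pointwise every simplex of $\partial_\infty\mathcal{A}$ whose closure contains $\xi_+$, then split $g\in G_\sigma\leq G_{\sigma_+}=P^{+}_{\gamma}=U^{+}_{\gamma}M_\gamma$ as $g=um$ and absorb $u$ into $G_\sigma$. The reduction $G_\sigma\leq P^{+}_{\gamma}$, the inheritance of normality, the ellipticity of contraction elements, and the appeal to Baumgartner--Willis for compactness of $U^{+}_{\gamma}\cap M_\gamma$ are all fine.

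The genuine gap is in your proof of the key claim, and you half-acknowledge it yourself. The prescription ``choose $R$ and $n$ growing at compatible relative rates'' cannot be carried out: the convergence $\gamma^{-n}g\gamma^{n}\to e$ only gives, for each truncation $K_R$, \emph{some} threshold $N(K_R)$ beyond which $g$ fixes $\gamma^{n}(K_R)$, with no upper bound on $N(K_R)$ in terms of $R$. If $N(K_R)$ grows faster than $R$, every fixed point $nv+q$ you produce (with $n\geq N(K_R)$, $q\in K_R$) has direction from $x_0$ collapsing onto $\xi_+$ rather than onto $\zeta$. Convexity alone does not rescue this: the convex set $\{(s,t)\in\mathbb{R}^2 : t\geq e^{s}\}$ contains a vertical ray above every point of the plane, yet its ideal boundary is not a neighbourhood of the vertical direction. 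What closes the gap is the combinatorial structure of the fixed-point set: $F:=\Fix_{\Delta}(g)$ is a convex subcomplex, so $F\cap\mathcal{A}$ is a convex subcomplex of the Coxeter complex $\mathcal{A}$ containing chambers (namely $\gamma^{n}(C)=C+nv$ for every chamber $C$ of $\mathcal{A}$ and $n$ large), hence is an intersection of half-apartments $\{f_\alpha\geq c_\alpha\}$. No bounding wall can have $\bar f_\alpha(v)<0$ (the rays $y+\mathbb{R}_{\geq N(y)}v$ would exit) nor $\bar f_\alpha(v)=0$ (points $y$ with $f_\alpha(y)<c_\alpha$ would never enter), so $\xi_+$ lies in the \emph{open} positive half of every bounding wall. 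Therefore $\partial_\infty(F\cap\mathcal{A})$ is an intersection of roots of $\partial_\infty\mathcal{A}$ containing an open neighbourhood of $\xi_+$; being a subcomplex of the sphere, it must contain the closed star of $\xi_+$ in $\partial_\infty\mathcal{A}$, hence $\overline{\sigma}$, and continuity of $g$ on $\Delta\cup\partial_\infty\Delta$ then gives $U^{+}_{\gamma}\leq G_\sigma$. Without this (or an equivalent) structural input, the key claim --- and with it the whole corollary --- is not established. A very minor further point: for the $G_{c}^{0}$ statement you should also check that $m=u^{-1}g$ is elliptic, e.g.\ because $G_{c}^{0}$ is the kernel of the translation-drift homomorphism on $G_{c}$ and both $u$ and $g$ lie in it.
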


\medskip
We define now the analog to the algebraic unipotent radical of a parabolic subgroup of an algebraic group. 
\begin{definition}[Root groups]
\label{def::H_plus}
 Let $G$ be a closed subgroup of $\Aut_{0}(\Delta)$ and $\alpha$ any root of $\partial_{\infty} \Delta$. The \textbf{root group}  $U_{\alpha}(G)$ associated with $\alpha$ and $G$ is defined to be the set of all elements $g$ of $G$ which fix $\st_{\partial_{\infty} \Delta}(P)$ pointwise for every panel $P$ in $\alpha - \partial \alpha$. For $G=\Aut_{0}(\Delta)$ we use the notation $U_{\alpha}:=U_{\alpha}(\Aut_{0}(\Delta))$.

\end{definition} 

\begin{remark}
Let $G$ be a closed subgroup of $\Aut_{0}(\Delta)$ and $\alpha$ a root of $\partial_{\infty} \Delta$. Then, as the action of $G$ on $\partial_{\infty} \Delta$ is continuous, we have $U_{\alpha}(G)$ is a closed subgroup of $G$. 
\end{remark}

\begin{remark}
Since $\Delta$ is an irreducible affine building, one can easily verify that for every proper subset $I$ of $S$, including the empty set $\emptyset$, there exists a unique simplex $\sigma$ of $c_+$ whose type is exaclty $I$.
\end{remark}

For any proper subset $I$ of $S$, including the empty set $\emptyset$, consider the corresponding simplex $\sigma$ of $c_+$ of type $I$, and let $\mathcal{A}_I := \mathcal{A} \cap \St(\sigma, \partial_\infty \Delta)$ (an apartment of $\St(\sigma, \partial_\infty \Delta)$). Then we define
$$U^{+}_{I}(G):= \langle U_{\alpha}(G) \; \vert \; \mathcal{A}_I \subset \alpha  \in \Phi^{+} \rangle \text{ and } U^{-}_{I}(G):= \langle U_{\alpha}(G) \; \vert \;  \mathcal{A}_I \subset - \alpha  \in \Phi^{+}\rangle$$
and call them the \textbf{standard positive}, respectively \textbf{standard negative}, \textbf{unipotent radical of type $I$} of $G$. 

For $I = \{\emptyset\}$ we simply use the notation $U^{\pm}(G):= U^{\pm}_{\emptyset}(G)$. When $G=\Aut_{0}(\Delta)$, we simply use the notation $U^{\pm}_{I}: = U^{\pm}_{I}(\Aut_{0}(\Delta))$. Notice that $U^{\pm}_{I}(G) \leq U^{\pm}(G)$, and $U^{\pm}_{I}(G) \leq U^{\pm}_{I}$, for every $I$ proper subset of $S$.

\begin{remark}
By an easy verification one can show that $U^{+}_{I}(G) \leq G_{\sigma}$.
\end{remark}

\begin{definition}[Moufang groups]
\label{def::moufang}
Let $H$ be a closed subgroup of $\Aut_{0}(\Delta)$. We say $H$ is  \textbf{Moufang} if for every root $\alpha$ of $\partial_{\infty} \Delta$ the associated root group $U_{\alpha}(H)$ acts transitively on $\mathcal{A}_{\partial_{\infty} \Delta}(\alpha)$.  
\end{definition}

Then we know that:
\begin{theorem}(See \cite[Theorem 1.5]{Cio_M})
\label{main_thm2}
Let $\Delta$ be a locally finite thick affine building of dimension $\geq 2$. Let $H$ be a closed subgroup of $\Aut_{0}(\Delta)$ that acts strongly transitively on $\Delta$. Then $H$ is Moufang.  
\end{theorem}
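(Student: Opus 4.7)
The plan is to fix an arbitrary root $\alpha$ of the spherical building $\partial_\infty \Delta$ and to show that the associated root group $U_\alpha(H)$ acts transitively on $\mathcal{A}_{\partial_\infty \Delta}(\alpha)$. Let $\Sigma_1, \Sigma_2 \in \mathcal{A}_{\partial_\infty \Delta}(\alpha)$; by the apartment correspondence \cite[Theorem 11.79]{AB} they lift to unique affine apartments $\widetilde{\Sigma}_1, \widetilde{\Sigma}_2 \subset \Delta$. I would first use the standard affine-building fact that two apartments whose ideal boundaries share a common root $\alpha$ contain a common affine half-apartment once one descends sufficiently deep in the direction of $\alpha$ (using convexity of intersections of apartments and the coincidence of asymptotic sectors). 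This yields an affine half-apartment $\tilde\alpha \subset \widetilde{\Sigma}_1 \cap \widetilde{\Sigma}_2$ with $\partial_\infty \tilde\alpha = \alpha$, which is the object on which strong transitivity will be applied.

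Next I would exploit the strong transitivity of $H$. For a chamber $C$ of $\Delta$ lying in the interior of $\tilde\alpha$, strong transitivity provides $g \in H$ with $g \widetilde{\Sigma}_1 = \widetilde{\Sigma}_2$ and $g C = C$; since $H$ is type-preserving, $g$ fixes $C$ pointwise (its induced action on $\widetilde{\Sigma}_2$ lies in the affine Weyl group and stabilizes $C$, hence is trivial on $C$). Using the BN-pair structure forced by strong transitivity together with the thickness of $\Delta$, one can successively correct $g$ by elements from fixators of chambers adjacent to $\tilde\alpha$ (a classical descent along the chambers of $\tilde\alpha$) to obtain $g' \in H$ with $g' \widetilde{\Sigma}_1 = \widetilde{\Sigma}_2$ and with $g'$ pointwise fixing the whole of $\tilde\alpha$.

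Finally I would verify $g' \in U_\alpha(H)$. Since $\dim \Delta \geq 2$, each spherical panel $P \subset \alpha - \partial \alpha$ corresponds to a nonempty family of parallel affine panels interior to $\tilde\alpha$, and any ideal chamber $D \in \st_{\partial_\infty \Delta}(P)$ is represented asymptotically by an affine chamber sharing one of those affine panels with a chamber in $\tilde\alpha$. Pointwise fixation of $\tilde\alpha$ forces $g'$ to fix each such affine panel and the chamber of $\tilde\alpha$ adjacent to it, and passing to the asymptotic class (through the apartment correspondence) yields $g'(D) = D$ for every $D \in \st_{\partial_\infty \Delta}(P)$. Hence $g' \in U_\alpha(H)$ with $g' \Sigma_1 = \Sigma_2$, which establishes the Moufang property for $H$.

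The main obstacle is the middle step: promoting a single-chamber-fixing element obtained from strong transitivity to one that fixes the entire affine half-apartment $\tilde\alpha$ pointwise. This is precisely where thickness and the hypothesis $\dim \Delta \geq 2$ are indispensable; the former ensures enough transverse chambers to run the descent, while the latter guarantees that $\alpha - \partial \alpha$ contains panels on which the Moufang condition is substantive rather than vacuous.
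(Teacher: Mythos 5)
Your reduction to producing a single element of $U_\alpha(H)$ carrying $\widetilde{\Sigma}_1$ to $\widetilde{\Sigma}_2$ is fine, and so are the first steps: the intersection $\widetilde{\Sigma}_1\cap\widetilde{\Sigma}_2$ is closed and convex and contains a common sector for each ideal chamber of $\alpha$, hence contains an affine half-apartment $\tilde\alpha$ asymptotic to $\alpha$; and strong transitivity gives $g\in H$ with $g(\widetilde{\Sigma}_1)=\widetilde{\Sigma}_2$ fixing a common chamber $C$ pointwise. Your middle ``descent'' step is in fact superfluous: composing $g|_{\widetilde{\Sigma}_1}$ with the isomorphism $\widetilde{\Sigma}_2\to\widetilde{\Sigma}_1$ that fixes $\widetilde{\Sigma}_1\cap\widetilde{\Sigma}_2$ pointwise yields a type-preserving automorphism of a Euclidean Coxeter complex fixing a chamber, hence the identity, so $g$ already fixes all of $\widetilde{\Sigma}_1\cap\widetilde{\Sigma}_2\supseteq\tilde\alpha$ pointwise.

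The genuine gap is the last step: pointwise fixation of an affine half-apartment $\tilde\alpha$ with $\partial_\infty\tilde\alpha=\alpha$ does \emph{not} imply membership in $U_\alpha(H)$. Take $G=\SL_3(\QQ_p)$: the compact torus $\{\diag(u,u^{-1},1):u\in\mathbb{Z}_p^\times\}$ fixes the standard apartment pointwise (hence every half-apartment of it), yet for $u\neq 1$ it moves lines through the standard point of $\mathbb{P}^2(\QQ_p)$, i.e.\ moves chambers of $\st_{\partial_{\infty}\Delta}(P)$ for an interior panel $P$ of $\alpha$, so it does not lie in $U_\alpha$. Your $g'$ is determined only up to right multiplication by $\Fix_H(\widetilde{\Sigma}_1)$, and most elements of that coset fail to lie in $U_\alpha(H)$; your stated justification (``fixing an affine panel and one adjacent chamber forces fixing the asymptotic class of every $D\in\st_{\partial_\infty\Delta}(P)$'') is a non sequitur, since the other chambers adjacent to that affine panel, and the sectors representing the other ideal chambers of $\st_{\partial_\infty\Delta}(P)$, leave $\tilde\alpha$ and are unconstrained. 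Producing elements of $H$ that act trivially on the full stars of the interior panels is precisely the hard content of the theorem; it is a strong rigidity statement (it rules out strongly transitive actions on exotic $\widetilde A_2$ buildings) and cannot follow from the soft argument given --- note that you never use that $H$ is closed or that $\Delta$ is locally finite, both of which are needed. The paper itself does not prove this statement but imports it from \cite{Cio_M}, where the argument is substantially more involved.
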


\begin{theorem}(See \cite[Lemma 7.25(3)]{AB})
\label{cor::simply_tran}
Let $\Delta$ be an irreducible, locally finite thick affine building of dimension $\geq 2$. Let $H$ be a closed subgroup of $\Aut_{0}(\Delta)$ and  $\alpha$ any root of $\partial_{\infty} \Delta$. Suppose the root group $U_{\alpha}(H)$  acts transitively on $\mathcal{A}_{\partial_{\infty} \Delta}(\alpha)$. Then the action of $U_{\alpha}(H)$ on $\mathcal{A}_{\partial_{\infty} \Delta}(\alpha)$ is simple-transitive; i.e., for every $u \in U_{\alpha}(H) - \{\id\}$ the action of $u$ has no fixed points on $\mathcal{A}_{\partial_{\infty} \Delta}(\alpha)$.
\end{theorem}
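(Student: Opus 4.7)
The plan is to prove the contrapositive of simple transitivity: assume $u \in U_\alpha(H)$ stabilises some apartment $\mathcal{A} \in \mathcal{A}_{\partial_\infty \Delta}(\alpha)$ and conclude $u = \id$. First, I would argue that $u$ fixes $\mathcal{A}$ pointwise. Because $\Delta$ has dimension $\geq 2$, the spherical building $\partial_\infty \Delta$ has rank $\geq 2$, and every chamber $C \subset \alpha$ carries at least one panel $P$ with $P \subset \alpha - \partial \alpha$: if all the panels of $C$ lay on the single wall $\partial \alpha$ this would contradict $C$ being a top-dimensional simplex in a rank $\geq 2$ Coxeter complex. The defining property of $U_\alpha(H)$ then yields that $u$ fixes $\st(P) \ni C$, so $u$ fixes every chamber of $\alpha$. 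Since a type-preserving automorphism of the Coxeter complex $\mathcal{A}$ fixing a chamber must be the identity of $\mathcal{A}$, we get $u|_{\mathcal{A}} = \id$, and in particular $u$ also fixes the opposite root $-\alpha$ pointwise.

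The second step is to extend this to $u = \id$ on all of $\partial_\infty \Delta$, and from there on $\Delta$. I would proceed by induction on the gallery distance $d(D, \mathcal{A})$ of a chamber $D \in \Ch(\partial_\infty \Delta)$ to $\mathcal{A}$. The base case $d=0$ is the first step. For $d \geq 1$, pick a minimal gallery $D = D_0, D_1, \dots, D_d$ with $D_d \in \mathcal{A}$. By induction $u(D_1) = D_1$, and hence $u$ fixes the common panel $P = D_0 \cap D_1$. When $P \subset \alpha - \partial \alpha$, the fixation of $\st(P)$ built into the definition of $U_\alpha(H)$ yields $u(D_0) = D_0$ directly. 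When $P$ sits in $\partial \alpha$ or outside $\alpha$, I would re-route the gallery so as to re-enter $\mathcal{A}$ through an interior panel of $\alpha$, exploiting that $u$ already fixes $-\alpha$ pointwise so that the roles of $\alpha$ and $-\alpha$ can be interchanged in the propagation argument, and using the combinatorial convexity of half-apartments in the thick spherical building $\partial_\infty \Delta$.

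The main obstacle is precisely this re-routing step: controlling which panels a minimal gallery crosses and showing that fixation of $u$ propagates from $\mathcal{A}$ to all of $\partial_\infty \Delta$ using only the information that $U_\alpha(H)$ fixes $\st(P)$ for interior panels $P \subset \alpha - \partial \alpha$. This is exactly the combinatorial rigidity packaged in Lemma~7.25(3) of \cite{AB}, and the argument there rests on the convexity of half-apartments in spherical Coxeter complexes together with thickness of $\partial_\infty \Delta$. Once every chamber of $\partial_\infty \Delta$ is fixed pointwise by $u$, standard rigidity of thick affine buildings under type-preserving automorphisms forces $u = \id$ in $\Aut_0(\Delta)$, which completes the argument.
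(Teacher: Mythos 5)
The paper does not actually prove this statement: it is quoted directly from \cite[Lemma 7.25(3)]{AB} with no proof environment, so the only question is whether your argument stands on its own. Your first step does, and it is the standard one: every chamber $C$ of $\alpha$ has a panel in $\alpha-\partial\alpha$ (a chamber has at most one panel on the single wall $\partial\alpha$, and it has at least two panels since the rank is $\geq 2$), so the defining property of $U_{\alpha}(H)$ forces $u$ to fix every chamber of $\alpha$, and a type-preserving automorphism of the thin Coxeter complex $\mathcal{A}$ fixing a chamber is the identity on $\mathcal{A}$. Hence $u$ fixes $\mathcal{A}$, and in particular $-\alpha$, pointwise.

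The gap is the entire second step, which is where the content of the lemma lives. In your induction on $d(D_0,\mathcal{A})$, knowing $u(D_1)=D_1$ only tells you that $u(D_0)$ is \emph{some} chamber of $\st(P)$, $P=D_0\cap D_1$, at the same gallery distance from $\mathcal{A}$; by thickness $\st(P)$ contains at least two candidates besides $D_1$, and the definition of $U_{\alpha}(H)$ controls $\st(P)$ only when $P\subset\alpha-\partial\alpha$. For a chamber $D_0$ far from $\alpha$ \emph{none} of its panels lies in $\alpha$, so no re-routing of the gallery can arrange for the final panel to be interior to $\alpha$: the induction cannot close in this form. At exactly this point you write that the step ``is exactly the combinatorial rigidity packaged in Lemma 7.25(3) of \cite{AB}'', i.e.\ you cite the statement being proved, which makes the argument circular. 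The genuine route is different: choose a chamber $C$ of $\alpha$ \emph{all} of whose panels lie in $\alpha-\partial\alpha$ --- such a chamber exists precisely because $(W,S)$ is irreducible of rank $\geq 2$ (it fails for $A_1\times A_1$, which is why irreducibility is a hypothesis) --- observe that $u$ then fixes pointwise the apartment together with the full star of every panel of $C$, and invoke the rigidity theorem for thick buildings (\cite[Theorem 5.205]{AB}), whose proof is a more delicate induction that first enlarges the family of panels with pointwise-fixed stars before propagating outward. Finally, the passage from ``$u$ acts trivially on $\partial_{\infty}\Delta$'' to ``$u=\id$ in $\Aut_{0}(\Delta)$'' is routine but should also be justified, since $U_{\alpha}(H)$ lives in $\Aut_{0}(\Delta)$ while the rigidity argument takes place entirely at infinity.
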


 By \cite[Chapter 6, Thms. 6.17 and 6.18]{Ro}, \cite[Theorem 1.5, Corollary 2.3]{Cio_M} one gets the following Levi decomposition type. 

\begin{proposition}
\label{prop::normal_U}
Let $\Delta$ be an irreducible, locally finite thick affine building of dimension $\geq 2$ such that $\Aut_{0}(\Delta)$ is strongly transitive on $\Delta$.
Let  $G$ be a closed and strongly transitive subgroup of $\Aut_{0}(\Delta)$. Let $\sigma_+$ be a simplex of $c_+$ of type $I$, with opposite $\sigma_-$ in $\partial_\infty \mathcal{A}$. 

Then $U^{\pm}_I = U^{\pm}_I(G)$ and  $G_{\sigma_\pm} =  U^{\pm}_I \rtimes M(G)$, where $M(G):= G_{\sigma_+} \cap G_{\sigma_-}$.  In particular, $U^{\pm}_I$ is a normal subgroup of $G_{\sigma_\pm}$, $U^{\pm}_I \cap M(G) =\Id$, and $U^{\pm}_I$ acts simple-transitively on $\Opp(\sigma_\pm)$, the set of all simplices of $\partial_\infty \Delta$ opposite $\sigma_\pm$.
\end{proposition}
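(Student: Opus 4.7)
First I would establish the identity $U^{\pm}_I = U^{\pm}_I(G)$. Theorem \ref{main_thm2} applied to $G$ gives that $U_\alpha(G)$ acts transitively on $\mathcal{A}_{\partial_\infty \Delta}(\alpha)$ for every root $\alpha$ of $\partial_\infty \Delta$, and applied to $\Aut_0(\Delta)$ (which is strongly transitive by hypothesis) yields the analogous statement for $U_\alpha$. Theorem \ref{cor::simply_tran} upgrades both actions to simple-transitive ones. Since $U_\alpha(G) \leq U_\alpha$ and both groups act simply transitively on the same set $\mathcal{A}_{\partial_\infty \Delta}(\alpha)$, they must coincide: given $u \in U_\alpha$, pick the unique $v \in U_\alpha(G)$ with $v \cdot \mathcal{A} = u \cdot \mathcal{A}$; then $v^{-1}u \in U_\alpha$ fixes $\mathcal{A}$, so simple-transitivity forces $v^{-1}u = \Id$. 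Passing to the generated subgroups yields $U^{\pm}_I(G) = U^{\pm}_I$.

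Second, the structural content---the semidirect decomposition together with simple-transitivity on opposite simplices---is exactly what \cite[Ch.~6, Thms.~6.17 and 6.18]{Ro} combined with \cite[Theorem 1.5, Corollary 2.3]{Cio_M} provide for a Moufang spherical building. Specialised to $\Aut_0(\Delta)$ acting on $\partial_\infty \Delta$, these references yield the decomposition $\Aut_0(\Delta)_{\sigma_\pm} = U^{\pm}_I \rtimes M_0$, where $M_0 := \Aut_0(\Delta)_{\sigma_+} \cap \Aut_0(\Delta)_{\sigma_-}$, with $U^{\pm}_I$ normal, and they assert that $U^{\pm}_I$ acts simply transitively on $\Opp(\sigma_\pm)$.

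Third, I would intersect this decomposition with $G$. Since $U^{\pm}_I \leq G$ by step one, every $g \in G_{\sigma_\pm} = G \cap \Aut_0(\Delta)_{\sigma_\pm}$ factors uniquely as $g = um$ with $u \in U^{\pm}_I \leq G$ and $m \in M_0$, hence $m = u^{-1}g \in G$ too, placing $m \in G \cap M_0 = M(G)$. This gives the semidirect decomposition $G_{\sigma_\pm} = U^{\pm}_I \rtimes M(G)$; the triviality $U^{\pm}_I \cap M(G) = \Id$ is inherited from the ambient decomposition, normality of $U^{\pm}_I$ in $G_{\sigma_\pm}$ is inherited from normality in $\Aut_0(\Delta)_{\sigma_\pm}$, and simple-transitivity of $U^{\pm}_I$ on $\Opp(\sigma_\pm)$ is untouched by restriction to $G$.

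The main obstacle---and the reason for step one---is that the Ronan--Moufang structure a priori speaks about $\Aut_0(\Delta)$, whereas the statement concerns the possibly smaller closed subgroup $G$. The identification $U_\alpha(G) = U_\alpha$ is what allows the structure to descend, and it hinges crucially on the strong transitivity of $G$ via Theorem \ref{main_thm2}. Once this bridge is built, the remainder of the proposition is bookkeeping on subgroup intersections.
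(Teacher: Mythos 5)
Your proposal is correct and follows essentially the same route as the paper's proof: identify $U_\alpha(G)=U_\alpha$ via the Moufang property (Theorem \ref{main_thm2}) and simple-transitivity (Theorem \ref{cor::simply_tran}), invoke Ronan's Theorems 6.17--6.18 for the ambient group $\Aut_0(\Delta)$, and then restrict the decomposition to $G$. Your write-up is in fact slightly more explicit than the paper's at two points — the simple-transitivity argument forcing $v^{-1}u=\Id$, and the factorization $g=um$ showing $m\in M(G)$ — but the ideas and references are identical.
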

\begin{proof}
By Theorem \ref{main_thm2} applied to the group $\Aut_{0}(\Delta)$ we get that $\Aut_{0}(\Delta)$ is Moufang. So by Theorem \ref{cor::simply_tran}, every root group $U_{\alpha}$ acts simple-transitively on $\mathcal{A}_{\partial_{\infty} \Delta}(\alpha)$. Since $G$ is also Moufang, again by Theorem \ref{main_thm2} applied to $G$, we get that $U_{\alpha}= U_{\alpha}(G)$, in particular $U^{\pm}_I = U^{\pm}_I(G)$.

Now by \cite[Chapter 6, Thms. 6.17 and 6.18]{Ro}, the conclusion of the proposition is true for $G= \Aut_{0}(\Delta)$. Since $G_{\sigma_\pm} \leq  \Aut_{0}(\Delta)_{\sigma_\pm}$, and $U^{\pm}_I$ normal in $\Aut_{0}(\Delta)_{\sigma_\pm}$, thus in $G_{\sigma_\pm}$ as well, and $U^{\pm}_I$ acts simple-transitively on $\Opp(\sigma_\pm)$, we obtain the conclusion for $G$ as well.

For the simple-transitively of $U^{\pm}_I$ on $\Opp(\sigma_\pm)$ one can also look at \cite[Corollary 7.67]{AB}.
\end{proof}

\begin{remark}
One can ask what the relation is between the unipotent radicals $U_I^{\pm}$ of type $I$ and the contraction subgroup $U_\gamma^{\pm}$, given that we have the equality $P^{\pm}_{\gamma} = G_{\sigma_\pm}$ and also the very similar associated Levi decompositions. For general closed groups $G \leq \Aut_{0}(\Delta)$, even for those that are Moufang, it is not clear when $U_I^{\pm}$ is a subgroup of $U_\gamma^{\pm}$, and in principal, $U_\gamma^{\pm}$ can be larger than $U_I^{\pm}$. But when $G$ is the group of $k$-rational points of a simple algebraic group over a local
field $k$, the contraction groups equal the unipotent radical, i.e. $U_\gamma^{\pm}=U_I^{\pm}$ ( see \cite[Lemma 2.4]{Pra}, or \cite[Prop. 7.4.33]{BrTi_72}, or \cite[Corollaries 4.28, 4.29]{Cio}).
\end{remark}

\subsection{Short on Chabauty topology}
For a locally compact topological space $X$, the set $\mathcal{F}(X)$ of all closed subsets of $X$ is a compact topological space with respect to the Chabauty topology (\cite[Proposition~1.7, p.~58]{CoPau}). Given a family $\mathcal{T} \subset \mathcal{F}(X)$ it is natural to ask what is the closure  $\overline{\mathcal{T}}^{Ch}$ of $\mathcal{T}$ with respect to the Chabauty topology, and whether or not elements of $\overline {\mathcal{T}}^{Ch} \setminus \mathcal{T} $ satisfy the same properties as the ones of  $\mathcal{T}$.  We call elements of $\overline{\mathcal{T}}^{Ch}$ the \textbf{Chabauty limits of $\mathcal{T}$}, and  $\overline{\mathcal{T}}^{Ch}$ the  \textbf{Chabauty compactification} of $\mathcal{T}$ (see \cite{Ch}). 

For a locally compact group $G$ we denote by $\mathcal{S}(G)$ the set of all closed subgroups of $G$. By \cite[Proposition~1.7, p.~58]{CoPau} the space $\mathcal{S}(G)$ is closed in $\mathcal{F}(G)$, with respect to the Chabauty topology, and thus compact.  

\begin{proposition}(\cite[Proposition~1.8, p.~60]{CoPau}, \cite[Proposition I.3.1.3]{CEM})
\label{prop::chabauty_conv}
 Suppose $X$ is a locally compact metric space.
A sequence of closed subsets $\{F_n\}_{n \in \NN} \subset \mathcal{F}(X)$  converges to $F \in \mathcal{F}(X)$ if and only if the following two conditions are satisfied:
\begin{itemize} 
\item[1)] For every $f \in F$ there is a sequence $\{f_n \in F_n\}_{n \in \NN}$ converging to $f$;
\item[2)] For every sequence $\{f_n \in F_n\}_{n \in \NN}$, if there is a strictly increasing subsequence $\{n_k\}_{k \in \NN}$ such that $\{f_{n_k} \in F_{n_k}\}_{k \in \NN}$ converges to $f$, then $f \in F$.
\end{itemize}
\end{proposition}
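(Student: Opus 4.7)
The plan is to work directly from the standard sub-basis of the Chabauty topology on $\mathcal{F}(X)$: for each compact $K \subset X$ the set $\mathcal{U}_K := \{F' \in \mathcal{F}(X) \; \vert \; F' \cap K = \emptyset\}$, and for each open $V \subset X$ the set $\mathcal{V}_V := \{F' \in \mathcal{F}(X) \; \vert \; F' \cap V \neq \emptyset\}$. Since $X$ is locally compact and metric, the space $\mathcal{F}(X)$ with the topology generated by these sub-basic sets is itself metrizable (as may be cited from \cite{CoPau}), so Chabauty convergence coincides with sequential convergence and it suffices to test membership in sub-basic neighborhoods.

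For the forward direction, assume $F_n \to F$ in the Chabauty topology. To verify (1), fix $f \in F$ and consider the decreasing sequence of open balls $B(f, 1/m)$. Each $\mathcal{V}_{B(f,1/m)}$ is a Chabauty open neighborhood of $F$, so for each $m$ there is $N_m$ with $F_n \cap B(f,1/m) \neq \emptyset$ whenever $n \geq N_m$; a diagonal extraction produces $f_n \in F_n$ with $f_n \to f$. To verify (2), suppose $f_{n_k} \in F_{n_k}$ with $f_{n_k} \to f$ but $f \notin F$. By local compactness, one can shrink an open neighborhood of $f$ disjoint from the closed set $F$ to a compact $K$ still disjoint from $F$; then $\mathcal{U}_K$ is a Chabauty neighborhood of $F$, so eventually $F_n \cap K = \emptyset$, contradicting $f_{n_k} \in K$ for $k$ large.

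For the backward direction, assume (1) and (2) hold. It is enough to show $F_n$ eventually belongs to every sub-basic neighborhood of $F$. If $\mathcal{V} = \mathcal{V}_V$ with $V$ open and $F \cap V \neq \emptyset$, pick $f \in F \cap V$; condition (1) provides $f_n \in F_n$ with $f_n \to f$, so $f_n \in V$ for $n$ large, whence $F_n \in \mathcal{V}_V$. If $\mathcal{U} = \mathcal{U}_K$ with $K$ compact and $K \cap F = \emptyset$, suppose for contradiction that $F_{n_k} \cap K \neq \emptyset$ along a subsequence, and choose $f_{n_k} \in F_{n_k} \cap K$. Compactness of $K$ yields a further subsequence converging to some $f \in K$; condition (2) then forces $f \in F$, contradicting $F \cap K = \emptyset$.

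The care required lies essentially in the step translating the sequential statement (2) into the topological language: without local compactness one could not replace an open neighborhood of $f \notin F$ by a compact one still disjoint from $F$, and the contradiction in the forward direction would collapse. The metric hypothesis likewise serves a clean purpose, namely guaranteeing that sequential convergence suffices to characterize the topology of $\mathcal{F}(X)$; beyond these two points the argument is a bookkeeping exercise in passing back and forth between sub-basic neighborhoods and sequences.
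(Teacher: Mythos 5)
Your argument is correct: it is the standard direct verification of the Kuratowski-type convergence criteria from the sub-basic open sets $\{F' : F'\cap K=\emptyset\}$ ($K$ compact) and $\{F' : F'\cap V\neq\emptyset\}$ ($V$ open) generating the Chabauty topology, with local compactness used exactly where you say it is. The paper itself offers no proof of this proposition --- it is quoted from the cited references of Courtois--Dal'Bo--Paulin and Canary--Epstein--Marden, whose proofs follow the same route --- so there is nothing divergent to compare; the only cosmetic point is that in the backward direction the points $f_{n_k}\in F_{n_k}\cap K$ should be extended to a full sequence $\{f_n\in F_n\}$ before invoking condition (2) as literally stated, which is immediate once one notes the relevant $F_{n_k}$ are nonempty.
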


Proposition~\ref{prop::chabauty_conv} can be applied to a sequence of closed subgroups $\{H_n\}_{n \in \NN} \subset \mathcal{S}(G)$ converging to $H \in \mathcal{S}(G)$, obtaining a similar characterisation of convergence in $\mathcal{S}(G)$, when $G$ is a locally compact group that is endowed with a metric inducing its topology.


\section{On the cone topology on $\partial_\infty \Delta$ and dynamical properties}
\label{sec::cone_top}

\subsection{Apartments and stars of ideal simplices}
\label{subsec::apartments_stars}

In this subsection we define and give some properties of the `stars' and the opposites of ideal simplices in the ideal boundary $\partial_\infty\Delta$ of a locally finite thick Euclidean building $\Delta$. We show that an apartment of $\Delta$ determines uniquely, respectively, is uniquely determined by, the stars of two opposite ideal simplices.

Let $\alpha:= \{a_n\}_{n \geq 1}$ be a sequence in $G$ that escapes every compact subset of $G$. Then, as the space $\Delta \cup \partial_{\infty} \Delta$ is compact with respect to the cone topology, the sequence $\{a_n\}_{n \geq 1}$ admits a subsequence $\beta:=\{a_{n_k}\}_{k\geq 1}$ with the property that $\{a_{n_k}(x_0)\}_{k\geq 1}$ converges to an ideal point $\xi \in \partial_{\infty} \Delta$. Notice, there is a unique minimal ideal simplex $\sigma \subset \partial_{\infty} \Delta$ such that $\xi$ is in the interior of $\sigma$. One can have anything in-between  $\sigma$ being an ideal chamber or $\sigma$ being an ideal vertex of the spherical building $\partial_{\infty} \Delta$.

\begin{definition}[Stars in apartments and $\sigma$-cones]
\label{def::st_cone_N}
Let $\sigma$ be an ideal simplex in $\partial_{\infty} \Delta$ and choose an apartment $\mathcal{B}$ of $\Delta$  such that $\sigma \subset \partial_{\infty} \mathcal{B}$. Let $x \in \mathcal{B}$ be a point. We set 
$$\st(\sigma,\partial_{\infty} \mathcal{B}):= \{c \in \Ch(\partial_{\infty} \mathcal{B}) \; \vert \; \sigma \text{ in the closure of } c\}, \text{ resp.,}$$
$$ \st(x, \mathcal{B}):= \{c \in \Ch(\mathcal{B}) \; \vert \; x \text{ in the closure of } c\},$$
 and call them the \textbf{$\partial_{\infty} \mathcal{B}$-star of $\sigma$}, respectively, the \textbf{$\mathcal{B}$-star of $x$}.  As well,  the associated \textbf{$\sigma$-cone in $\mathcal{B}$ with base-point $x \in \mathcal{B}$} is defined by $Q(x, \sigma, \mathcal{B}):= \bigcup\limits_{c \in \st(\sigma,\partial_{\infty} \mathcal{B})} Q(x, c)$, where $Q(x, c) \subset \mathcal{B}$ is the open Weyl sector emanating from the point $x$ and having $c$ as ideal chamber.  A  \textbf{$\sigma$-cone in $\mathcal{B}$} is  $Q(x, \sigma,\mathcal{B})$ for some $x \in \mathcal{B}$.
\end{definition}

\begin{remark}
\begin{enumerate}
\item
 Notice, $\st(\sigma,\partial_{\infty} \mathcal{B})$, respectively, $\st(x, \mathcal{B})$, are both a finite set of ideal chambers, respectively, chambers.
\item
 As well, the intersection of two $\sigma$-cones in $\mathcal{B}$ contains a $\sigma$-cone in $\mathcal{B}$.
\item
 Moreover, if we take $\sigma_-$ to be the unique simplex in $\partial \mathcal{B}$ opposite $\sigma$, then $\st(\sigma,\partial_{\infty} \mathcal{B})$ is opposite $\st(\sigma_-,\partial_{\infty} \mathcal{B})$ in the sense that any ideal chamber $c \in \st(\sigma,\partial_{\infty} \mathcal{B})$ has a unique opposite in  $\st(\sigma_-,\partial_{\infty} \mathcal{B})$, and vice-versa.
\end{enumerate}
\end{remark}

The following remark recalls some facts related to residues and projections (see \cite[Section 5.3]{AB}).
\begin{remark}
\begin{enumerate}
\item
Given an ideal simplex $\sigma_+$ in $\partial_\infty \Delta$ of type $I$, the star $\st(\sigma_+, \partial_\infty \Delta)$ of $\sigma$ in $\partial_\infty \Delta$ is a residue of type $I$ in $\partial_\infty \Delta$. Also, if $\sigma_-$ is an ideal simplex opposite $\sigma_+$, then  $\st(\sigma_-, \partial_\infty \Delta)$ is again a residue of some type $J \subset S$ in $\partial_\infty \Delta$. 
\item
Given an ideal chamber $c$ and a residue $\mathcal{R}$ in $\partial_\infty \Delta$, the projection of $c$ onto $\mathcal{R}$, denoted by $\proj_{\mathcal{R}}(c)$, is the unique chamber in $\mathcal{R}$ at minimal distance from $c$. 
\end{enumerate}
\end{remark}


For the following lemma we will consider the star of a simplex as a set of chambers.

\begin{lemma}
\label{lem::lem_36}
Let $\Delta$ be a locally finite thick Euclidean building, $\sigma_{-}$ and $\sigma_{+}$ two opposite ideal simplices of $\partial_\infty \Delta$, and $\mathcal{B}$ an apartment of $\Delta$ such that $\sigma_{-}, \sigma_{+} \subset \partial_\infty \mathcal{B}$. Then among the $\partial_{\infty} \mathcal{B}'$-stars of $\sigma_-$, i.e. $\st(\sigma_-,\partial_{\infty} \mathcal{B}') \subset \st(\sigma_-, \partial_\infty \Delta)$ with $\mathcal{B}'$ an apartment of $\Delta$,  $\st(\sigma_-,\partial_{\infty} \mathcal{B})$ is the unique one with this property that $\st(\sigma_+,\partial_{\infty} \mathcal{B})$ and $\st(\sigma_-,\partial_{\infty} \mathcal{B'})$ can be part of the same ideal apartment in $\partial_\infty \Delta$. In particular, $\mathcal{B}$ is uniquely determined by, respectively, determines uniquely, $\st(\sigma_+,\partial_{\infty} \mathcal{B})$ and $\st(\sigma_-,\partial_{\infty} \mathcal{B})$.
\end{lemma}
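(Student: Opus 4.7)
My plan is to exploit the fact that two opposite chambers of $\partial_\infty \Delta$ determine a unique apartment, together with the opposition involution inside a spherical apartment containing both $\sigma_+$ and $\sigma_-$. The key input is the standard building-theoretic fact that if $\sigma_+$ and $\sigma_-$ are opposite simplices of a spherical building $Y$, then for every chamber $c_+ \in \st(\sigma_+, Y)$ there exists a \emph{unique} chamber $c_- \in \st(\sigma_-, Y)$ opposite $c_+$ in $Y$ (for instance, $c_- = \proj_{\st(\sigma_-, Y)}(c_+)$). This pairs the two residues bijectively, and this uniqueness is the engine behind the lemma.

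Next, suppose $\mathcal{B}'$ is an apartment of $\Delta$ with $\sigma_- \subset \partial_\infty \mathcal{B}'$, and some ideal apartment $\mathcal{C} \subset \partial_\infty \Delta$ contains both $\st(\sigma_+, \partial_\infty \mathcal{B})$ and $\st(\sigma_-, \partial_\infty \mathcal{B}')$. Since $\mathcal{C}$ is a spherical apartment containing $\sigma_+$ and $\sigma_-$, these two simplices are opposite inside $\mathcal{C}$ (opposition in the building is equivalent to opposition in any apartment containing them). A cardinality argument, using that the number of chambers of any spherical apartment containing a fixed simplex of type $I$ equals $|W_I|$, yields the equalities $\st(\sigma_+, \partial_\infty \mathcal{B}) = \st(\sigma_+, \mathcal{C})$ and $\st(\sigma_-, \partial_\infty \mathcal{B}') = \st(\sigma_-, \mathcal{C})$. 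The opposition involution in the spherical apartment $\mathcal{C}$ induces a bijection $c_+ \mapsto c_-^{(\mathcal{C})}$ from $\st(\sigma_+, \mathcal{C})$ onto $\st(\sigma_-, \mathcal{C})$. Running the same opposition argument inside the apartment $\partial_\infty \mathcal{B}$ (which by hypothesis contains both $\sigma_+$ and $\sigma_-$) gives a bijection $c_+ \mapsto c_-^{(\partial_\infty \mathcal{B})}$ from $\st(\sigma_+, \partial_\infty \mathcal{B})$ onto $\st(\sigma_-, \partial_\infty \mathcal{B})$. By the uniqueness recalled in the first paragraph, these two opposite-chambers coincide for every $c_+$, so
\[
\st(\sigma_-, \partial_\infty \mathcal{B}') \;=\; \st(\sigma_-, \mathcal{C}) \;=\; \st(\sigma_-, \partial_\infty \mathcal{B}),
\]
which is the uniqueness assertion.

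For the ``in particular'' part, I would pick any $c_+ \in \st(\sigma_+, \partial_\infty \mathcal{B})$ and its unique opposite $c_- \in \st(\sigma_-, \partial_\infty \mathcal{B})$; being opposite in $\partial_\infty \Delta$, they belong to a unique ideal apartment, which must then equal $\partial_\infty \mathcal{B}$. The one-to-one correspondence between apartments of $\Delta$ and apartments of $\partial_\infty \Delta$ recorded in \cite[Theorem 11.79]{AB} then recovers $\mathcal{B}$ from $\partial_\infty \mathcal{B}$. The step requiring the most care is the chamber-by-chamber identification of the opposition bijections performed in $\mathcal{C}$ and in $\partial_\infty \mathcal{B}$; this is exactly where the uniqueness of the opposite chamber containing a prescribed opposite face does the essential work, and where the lemma would fail if we dropped either of the stars from the hypothesis.
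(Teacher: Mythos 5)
The engine of your argument is the claim that for every chamber $c_+ \in \st(\sigma_+, Y)$ there is a \emph{unique} chamber of $\st(\sigma_-, Y)$ opposite $c_+$ in the whole building $Y$, and that this chamber is $\proj_{\st(\sigma_-, Y)}(c_+)$. Both halves of this claim are false in a thick building. Take $Y$ to be the flag complex of a thick projective plane, $\sigma_+ = p$ a point, $\sigma_- = \ell$ a line with $p \notin \ell$ (these are opposite simplices), and $c_+ = (p,m)$ a flag through $p$. The chambers of $\st(\ell, Y)$ opposite $(p,m)$ are exactly the flags $(q,\ell)$ with $q \in \ell \setminus \{\ell \cap m\}$, and thickness gives at least two of them; moreover $\proj_{\st(\ell, Y)}((p,m))$ is the gate $(\ell \cap m, \ell)$, i.e.\ the chamber of the residue at \emph{minimal} Weyl distance from $c_+$, which is precisely the one that is \emph{not} opposite $c_+$. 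What is true is only the apartment-level statement you also invoke: inside a fixed apartment containing $\sigma_{\pm}$, each $c_+$ in the star of $\sigma_+$ has a unique opposite in that apartment, and it lies in the star of $\sigma_-$. But that chamber genuinely depends on the apartment: in the example, the apartments containing the flag $(p,m)$ and the line $\ell$ are parametrized by the choice of $q \in \ell \setminus \{\ell\cap m\}$, and each choice yields a different opposite chamber $(q,\ell)$.

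This breaks the decisive step ``by the uniqueness recalled in the first paragraph, these two opposite-chambers coincide for every $c_+$'': the opposition bijections computed in $\mathcal{C}$ and in $\partial_\infty \mathcal{B}$ both land in $\st(\sigma_-, \partial_\infty \Delta)$ but need not agree, so you cannot conclude $\st(\sigma_-, \mathcal{C}) = \st(\sigma_-, \partial_\infty \mathcal{B})$ this way. (Your cardinality identifications of the stars inside $\mathcal{C}$ with those inside $\partial_\infty\mathcal{B}$ and $\partial_\infty\mathcal{B}'$, and the ``in particular'' part, are fine.) The repair is to replace the opposition map by the projection map, which is what the paper does: $d := \proj_{\st(\sigma_-, \partial_\infty \Delta)}(c)$ is canonically attached to $c$ and $\sigma_-$, lies in every apartment containing $c$ and $\sigma_-$ (hence in both $\partial_\infty\mathcal{B}$ and $\mathcal{C}$), and, because opposite residues are parallel, restricts to a bijection from $\st(\sigma_+, \partial_\infty\mathcal{B})$ onto $\st(\sigma_-, \partial_\infty\mathcal{B})$; this forces $\st(\sigma_-, \partial_\infty\mathcal{B}) \subseteq \st(\sigma_-, \mathcal{C}) = \st(\sigma_-, \partial_\infty\mathcal{B}')$, and equality then follows from your cardinality count.
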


\begin{proof}
Let $c \in \st(\sigma_+,\partial_{\infty} \mathcal{B})$ and $\mathcal{B}'$ be an apartment of $\Delta$ such that $c$ and $\sigma_{-}$ both belong to $\partial_\infty \mathcal{B}'$. Then there is a unique ideal chamber $c'_-$ of $\partial_\infty \mathcal{B}'$ which contains $\sigma_{-}$ in its closure, and it is opposite $c$. It is a well know fact that any two opposite ideal chambers uniquely determine, respectively, are uniquely determined by, an apartment of $\Delta$. 

Let $\mathcal{B}'$ be an apartment of $\Delta$ with $\sigma_{-} \subset \partial_\infty \mathcal{B}'$. Suppose $\st(\sigma_-,\partial_{\infty} \mathcal{B'})$ and $\st(\sigma_+,\partial_{\infty} \mathcal{B})$ are part of the same ideal apartment $\partial_{\infty} \mathcal{B''}$, for some apartment $\mathcal{B}''$ of $\Delta$. We claim that $\mathcal{B''}=\mathcal{B}$. Indeed, let $c \in \st(\sigma_+,\partial_{\infty} \mathcal{B})$. Consider the projection $d:=\proj_{\st(\sigma_-, \partial_\infty \Delta)}(c)$. Then since $c, \sigma_+,\sigma_-$ are all in $\partial_\infty \mathcal{B}$ and $\partial_\infty \mathcal{B}''$, we have that $\proj_{\st(\sigma_-, \partial_\infty \Delta)}(c)$ is in both $\partial_\infty \mathcal{B}$ and $\partial_\infty \mathcal{B}''$; apply  \cite[Lemma 5.36]{AB} and the fact that in each of the apartments $\partial_{\infty} \mathcal{B}$ and $ \partial_{\infty} \mathcal{B''}$  there is a unique chamber at distance $$w_1:= \min\{\delta(\st(\sigma_+,\partial_{\infty} \mathcal{B}), \st(\sigma_-,\partial_{\infty} \mathcal{B}'))\} \in W \text{ from } c.$$
 Moreover, by the same \cite[Lemma 5.36]{AB} we also have that $\proj_{\st(\sigma_+, \partial_\infty \Delta)}(d)=c$. Thus, for every $c \in \st(\sigma_+,\partial_{\infty} \mathcal{B})$ the projection $\proj_{\st(\sigma_-, \partial_\infty \Delta)}(c)$ is in both $\partial_\infty \mathcal{B}$ and $\partial_\infty \mathcal{B}''$. From here one can easily conclude that $\st(\sigma_-,\partial_{\infty} \mathcal{B'})$ is in both $\partial_\infty \mathcal{B}$ and $\partial_\infty \mathcal{B}''$, so $\st(\sigma_-,\partial_{\infty} \mathcal{B'})= \st(\sigma_-,\partial_{\infty} \mathcal{B})$, and $\mathcal{B''}=\mathcal{B}$.
\end{proof}

\subsection{The cone topology on $I_{\sigma}(\partial_\infty \Delta)$}
\label{subsec::cone_top_type_I}

 For each ideal simplex $\sigma$ we choose an ideal point $\xi_\sigma \in \sigma$, called its \textbf{barycenter}. We impose that any type-preserving automorphism $g$ between ideal apartments in $\partial_\infty \Delta$ permutes these barycenters: $g(\xi_\sigma)=\xi_{g(\sigma)}$.
Moreover, for any two opposite ideal simplices $\sigma_+$ and $\sigma_{-}$, we impose that the corresponding barycenters $\xi_{\sigma_+}$ and $\xi_{\sigma_{-}}$ are also opposite. Actually, we just need to choose $\xi_\sigma$ for $\sigma$ in the fundamental ideal chamber of $\partial_\infty \Delta$.  Since we consider that $\Aut_{0}(\Delta)$ acts strongly transitively on $\partial_\infty \Delta$ then $\xi_{\sigma_1}$ will be uniquely and well defined for any simplex $\sigma_1 \in \partial_\infty \Delta$ by the above conditions.

We recall the following definition regarding the cone topology on $\Delta \cup \partial_\infty \Delta$, (see \cite[Part II, Chapter 8]{BH99} or \cite{CMRH} for the general case of masure). For the notation the reader can go back to Section \ref{sec::bg_material}.

\begin{definition}
\label{def::standard_open_neigh_cone_top}
Let  $x \in \Delta$ be a point, $\sigma \subset \bd_\infty \Delta$ be an ideal simplex, and $\sigma' \in I_{\sigma}(\partial_\infty \Delta)$. Let $\xi_{\sigma'}$ be the barycenter of $\sigma'$. It is a well-known fact that there exists an apartment $\mathcal{A} \subset \Delta$ such that $x \in \mathcal{A}$ and $\sigma' \subset \partial_\infty \mathcal{A}$.  Consider the
ray $[x, \xi_{\sigma'}) \subset \mathcal{A}$ issuing from $x$ and corresponding to the ideal point $\xi_{\sigma'}$ of $\sigma'$. Denote by $Q(x,\sigma')$ the cone in $\mathcal{A}$ with base point $x$ and ideal boundary $\sigma'$.  We have that $Q(x,\sigma')$ is an $I_{\sigma}$-sector with base point $x$. Notice $Q(x,\sigma')$ is different than the $\sigma'$-cone $Q(x,\sigma',\mathcal{A})$ in $\mathcal{A}$. Let $r\in [x, \xi_{\sigma'})$; the following subset of $\partial_\infty \Delta$ is defined:
$$
U_{x, r, {\sigma'}}:= \{ \sigma'' \in I_{\sigma}(\partial_\infty \Delta) \; \vert \; [x, r]
\subset [x, \xi_{\sigma'}) \cap [x, \xi_{\sigma''})\}.
$$
The subset $U_{x, r, \sigma'}$ is called a \textbf{standard open neighborhood} in $I_{\sigma}(\partial_\infty \Delta)$ of the (open) simplex $\sigma' \in I_{\sigma}(\partial_\infty \Delta)$ with base point $x$ and gate $r$.
\end{definition}

Notice the cone $Q(x,\sigma')$ defined in Definition \ref{def::standard_open_neigh_cone_top} is a convex set in the apartment $\mathcal{A}$, and $[x, \xi_{\sigma'})$ is a ray in the interior of $Q(x,\sigma')$.

\begin{definition}
\label{def::cone_top_on_chambers}
Let $\Delta$ be a locally finite thick Euclidean building, $x$ a point of $\Delta$, and $\sigma \subset \partial_\infty \Delta$ an ideal simplex. The \textbf{cone topology} $\Top_{x}(I_{\sigma}(\partial_\infty \Delta))$ on $I_{\sigma}(\partial_\infty \Delta)$, with base point $x$, is the topology generated by the standard open neighborhoods $U_{x, r, \sigma'}$ with $\sigma' \in I_{\sigma}(\partial_\infty \Delta)$ and $r\in [x, \xi_{\sigma'})$.
\end{definition}

\begin{lemma}\label{lem:3.3} Let $\Delta$ be a locally finite thick Euclidean building, $x$ a point of $\Delta$, and $\sigma \subset \partial_\infty \Delta$ an ideal simplex. The cone topology $\Top_{x}( I_{\sigma}(\partial_\infty \Delta))$ does not depend on the choice of the barycenters.
\end{lemma}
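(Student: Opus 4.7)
The plan is to show the two cone topologies agree by comparing their bases directly. Let $\{\xi_\sigma\}$ and $\{\xi'_\sigma\}$ be two equivariant barycenter systems and write $\Top_x$, $\Top'_x$ for the resulting cone topologies on $I_\sigma(\partial_\infty \Delta)$. By the evident symmetry in the roles of the two systems, it suffices to prove the inclusion $\Top_x \subseteq \Top'_x$, i.e., that every basic open set $U_{x,r,\sigma'}$ is open in $\Top'_x$. Concretely, for each $\sigma'' \in U_{x,r,\sigma'}$, so that $[x,r] \subset [x,\xi_{\sigma'}) \cap [x,\xi_{\sigma''})$, the task is to produce a gate $r'' \in [x, \xi'_{\sigma''})$ such that the basic open $U'_{x, r'', \sigma''}$ (built from $\{\xi'_\sigma\}$) is contained in $U_{x,r,\sigma'}$.

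The key geometric observation is that the $I_\sigma$-sector $Q(x, \sigma'')$, viewed as a subset of any apartment $\mathcal{A}''$ through $x$ and $\sigma''$, is independent of the choice of barycenter, and both rays $[x, \xi_{\sigma''})$ and $[x, \xi'_{\sigma''})$ lie in its open interior. In particular, the point $r \in [x, \xi_{\sigma''})$ is interior to $Q(x, \sigma'')$. I would then pick $r''$ on $[x, \xi'_{\sigma''})$ sufficiently far from $x$ that, within $\mathcal{A}''$, every apartment $\mathcal{A}_\tau$ of $\Delta$ containing $x$ and the segment $[x, r'']$ — for any $\tau \in I_\sigma(\partial_\infty \Delta)$ with $[x, r''] \subset [x, \xi'_\tau)$ — must also contain $r$ in its intersection with $\mathcal{A}''$. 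This is possible because $r$ and $r''$ both lie deep in the interior of $Q(x, \sigma'')$, $\Delta$ branches only along codimension-one walls, and by local finiteness only finitely many apartments meet a compact piece of $\mathcal{A}''$.

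For the verification, take $\tau \in U'_{x, r'', \sigma''}$: by definition $[x, r''] \subset [x, \xi'_\tau)$, hence $r''$ is interior to $Q(x, \tau)$. Pick an apartment $\mathcal{A}_\tau$ containing $x$, $\tau$ and $[x, r'']$; the choice of $r''$ guarantees $r \in \mathcal{A}'' \cap \mathcal{A}_\tau$. The canonical type-preserving simplicial isomorphism $\phi : \mathcal{A}'' \to \mathcal{A}_\tau$ fixing the convex subcomplex $\mathcal{A}'' \cap \mathcal{A}_\tau$ pointwise must carry $\sigma''$ to $\tau$, because in each apartment there is a unique $I_\sigma$-sector based at $x$ with $r''$ in its interior. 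By the equivariance of $\{\xi_\sigma\}$ under type-preserving automorphisms, $\phi(\xi_{\sigma''}) = \xi_\tau$, and so $\phi([x, \xi_{\sigma''})) = [x, \xi_\tau)$. Since $\phi$ fixes $r$ and $r \in [x, \xi_{\sigma''})$, we conclude $r \in [x, \xi_\tau)$. Together with $[x, r] \subset [x, \xi_{\sigma'})$ (given), this yields $\tau \in U_{x, r, \sigma'}$.

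The main obstacle lies in the second paragraph: making the phrase ``$r''$ far enough out'' genuinely force $r \in \mathcal{A}_\tau$ for every relevant $\mathcal{A}_\tau$. This is a convexity-plus-branching argument in $\Delta$: the deeper $r''$ sits in the interior of $Q(x, \sigma'')$, the larger the common region that any apartment through $[x, r'']$ shares with $\mathcal{A}''$, and eventually this region must engulf $r$. A cleaner alternative, which I would explore as a back-up, is to characterize convergence in $\Top_x(I_\sigma(\partial_\infty \Delta))$ intrinsically in terms of compact convergence of the sectors $Q(x, \sigma_n)$ to $Q(x, \sigma')$; this description is manifestly barycenter-independent, and identifying it with the neighborhood-base definition immediately gives the lemma.
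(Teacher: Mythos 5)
Your proposal is correct and follows essentially the same route as the paper: both arguments compare basic neighborhoods for the two barycenter systems, invoke the building axiom to get a type-preserving isomorphism of apartments (sectors) fixing the common convex overlap, and use the imposed equivariance of barycenters under such isomorphisms to transfer the ray-membership condition, concluding by symmetry. The "deep enough" overlap step you flag as the main obstacle is handled at the same level of informality in the paper (which simply asserts the intersection of the two sectors is a "large" convex portion containing a far-out point of the other ray), so your version is no less complete.
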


\begin{proof} Let us consider another choice $\eta$ for the family of barycenters.
Then, considering the cone topology with respect to the family of baricenters $\xi$, if  $\sigma'' \in U_{x, r, \sigma'}^{\xi}$, for some $r$ large enough, then $Q(x,\sigma'')\cap Q(x,\sigma')$ is a convex set, it is a ``large'' portion of $Q(x,\sigma')$, and contains $[x,r]$. 
In particular, $Q(x,\sigma'')\cap Q(x,\sigma')$ contains some large $r'\in [x,\eta_{\sigma'})$.
By the last axiom of a building, there is a Weyl isomorphism of apartments $\psi$ that sends $Q(x,\sigma')$ to $ Q(x,\sigma'')$ and fixes $[x,r]$.
Then $\psi$ sends $\eta_{\sigma'}$ to $\eta_{\sigma''}$ and $[x,\eta_{\sigma'})$ to $[x,\eta_{\sigma''})$.
So $r'\in [x,\eta_{\sigma''})$ and $\sigma'' \in  U_{x, r', \sigma'}^\eta$.
We have proved that the cone topology associated with the family $\xi$ is finer than the one associated with $\eta$. Now, by symmetry, the result follows.
\end{proof}

The following is well known for \cat spaces, thus also for affine buildings (see \cite[Part II, Chapter 8]{BH99}).

\begin{proposition}
\label{prop::independence_base_point}
Let $\Delta$ be a locally finite thick Euclidean building, and $\sigma \subset \partial_\infty \Delta$ an ideal simplex. Let $x,y \in \Delta$ be two different points. Then the cone topologies $\Top_{x}(I_{\sigma}(\partial_\infty \Delta))$ and $\Top_{y}( I_{\sigma}(\partial_\infty \Delta))$ are the same.
 \end{proposition}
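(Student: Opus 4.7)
The plan is to establish that the topologies $\Top_x( I_\sigma(\partial_\infty \Delta))$ and $\Top_y( I_\sigma(\partial_\infty \Delta))$ mutually refine one another, which is a standard fact for cone topologies on \cat spaces (and $\Delta$ is \cat, being a locally finite thick Euclidean building). By the symmetry between $x$ and $y$, I would only verify that $\Top_x$ is finer than $\Top_y$: every standard basic open $U_{y, s, \sigma'}$ is also open in $\Top_x$. Concretely, given $\sigma'' \in U_{y, s, \sigma'}$, the aim is to produce $r \in [x, \xi_{\sigma''})$ such that $U_{x, r, \sigma''} \subseteq U_{y, s, \sigma'}$.

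The key geometric input is that the rays $[x, \xi_{\sigma''})$ and $[y, \xi_{\sigma''})$ are asymptotic in the \cat sense, so their pairwise distance is a non-increasing convex function bounded above by $d(x, y)$; in a locally finite thick Euclidean building they eventually fellow-travel inside a common apartment $\mathcal{A}$, where they appear as parallel Euclidean rays. I would choose $r \in [x, \xi_{\sigma''})$ far enough that $r$ lies in this $\mathcal{A}$ and that the corresponding parallel point on $[y, \xi_{\sigma''})$ lies past $s$. Now for any $\sigma''' \in U_{x, r, \sigma''}$, the last axiom of a building provides a Weyl isomorphism of apartments (as in the proof of Lemma~\ref{lem:3.3}) sending $Q(x, \sigma'')$ to $Q(x, \sigma''')$ and fixing $[x, r]$ pointwise, hence in particular fixing a non-trivial initial sub-ray of $[r, \xi_{\sigma''})$ inside $\mathcal{A}$. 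Transferring this fixed initial data through the Euclidean parallelism inside $\mathcal{A}$ to the ray emanating from $y$ shows that $[y, \xi_{\sigma''})$ and $[y, \xi_{\sigma'''})$ coincide on an initial segment reaching past $s$; combined with the hypothesis $\sigma'' \in U_{y, s, \sigma'}$, this gives $\sigma''' \in U_{y, s, \sigma'}$, as desired.

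The main obstacle will be cleanly transferring the discrete, simplicial coincidence condition on initial segments of rays from the base point $x$ to the base point $y$, since the two base points sit a priori in potentially different apartments. The Euclidean parallelism inside the common apartment $\mathcal{A}$ serves as the bridge, but one must carefully verify that the Weyl isomorphism exhibited on the $x$-side transports, via this parallelism, to an analogous Weyl isomorphism fixing $[y, s]$ on the $y$-side. This reduces to checking that apartments, their simplicial structure at infinity, and Euclidean parallelism of sub-rays inside $\mathcal{A}$ interact compatibly in the manner required by Definition~\ref{def::standard_open_neigh_cone_top}.
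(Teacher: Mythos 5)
The paper does not actually prove this proposition; it invokes it as a standard fact about the cone topology on \cat spaces with a pointer to \cite{BH99}*{Part II, Chapter 8}, so your attempt is being measured against the standard argument rather than against anything in the text. Your overall architecture is the right one: mutual refinement by symmetry, the reduction of $\sigma''' \in U_{y,s,\sigma'}$ to the single condition $[y,s]\subset [y,\xi_{\sigma'''})$, and the use of asymptoticity and convexity of the metric. However, the step you yourself flag as the main obstacle is a genuine gap, and the mechanism you propose for it does not close it. The apartment $\mathcal{A}$ is chosen using only $\sigma''$: it contains tails of $[x,\xi_{\sigma''})$ and $[y,\xi_{\sigma''})$. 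The Weyl isomorphism sending $Q(x,\sigma'')$ to $Q(x,\sigma''')$ and fixing $[x,r]$ tells you nothing about the behaviour of the ray $[y,\xi_{\sigma'''})$ near $y$: that ray need not meet $\mathcal{A}$ at all in the range $[0,d(y,s)]$, and the hypothesis $\sigma'''\in U_{x,r,\sigma''}$ only constrains geometry far from $y$ (along $[x,r]$). ``Euclidean parallelism inside $\mathcal{A}$'' transports data between the two $\sigma''$-rays, not from the pair $(\sigma'',\sigma''')$ at basepoint $x$ to the pair $(\sigma'',\sigma''')$ at basepoint $y$.

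The two missing ingredients are metric, not combinatorial. First, the \cat convexity estimates: for any ideal point $\eta$ one has $d\bigl([x,\eta)(t),[y,\eta)(t)\bigr)\le d(x,y)$ for all $t$ (convex, bounded, hence non-increasing), and for two rays $c,c'$ issuing from the same point with $c(t)=c'(t)$ for $t\le R$ one gets $d\bigl([y,\xi_{\sigma''})(t),[y,\xi_{\sigma'''})(t)\bigr)\le 2d(x,y)\,t/R$ for $t\le R$ by convexity and vanishing at $t=0$. Second, a uniform angle gap: two distinct rays from a common point $b$ towards barycenters of distinct type-$I_\sigma$ simplices branch at $b$ with Alexandrov angle at least some $\theta_0>0$ depending only on $(W,S)$ and the chosen barycenter (place the two germs at $b$ in a common apartment and use that the barycentric directions in an apartment form a finite set), so beyond the branch point they diverge at linear rate at least $2\sin(\theta_0/2)$. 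Combining the two: if $[y,\xi_{\sigma''})$ and $[y,\xi_{\sigma'''})$ branched before parameter $d(y,s)$, they would be at distance at least $2\bigl(R-d(y,s)\bigr)\sin(\theta_0/2)$ at parameter $R$, contradicting the upper bound $2d(x,y)$ obtained through the coinciding $x$-rays once $R=d(x,r)$ is large. Without the angle gap, mere $\epsilon$-closeness of the $y$-rays cannot be upgraded to the exact coincidence of segments demanded by Definition~\ref{def::standard_open_neigh_cone_top}, so this discreteness input is essential and cannot be replaced by the apartment-isomorphism argument of Lemma~\ref{lem:3.3}.
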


Another useful lemma is the following.

\begin{lemma}
\label{lem::open_open_I}
Let $\Delta$ be a locally finite thick Euclidean building, $(W,S)$ the associated irreducible spherical Coxeter system, and $I \subsetneq S$. Let $c$ be an ideal chamber in $\partial_\infty \Delta$, and $\sigma$ a simplex of $c$ of type $I$. If $V_c$ is an open neighborhood of $c$ with respect to the cone topology on $\Ch(\partial_\infty \Delta)$, then $V_\sigma:=\{ \sigma' \subset V_c \; \vert \; \sigma' \text{ of type I} \}$ is an open neighborhood of $\sigma$ with respect to the cone topology on $I(\partial_\infty \Delta)$.
\end{lemma}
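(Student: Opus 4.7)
The plan is to interpret $V_\sigma$ via the natural ``type-$I$ face'' map, showing that $\sigma \in V_\sigma$ is immediate and then producing, for every $\sigma' \in V_\sigma$, a standard basic neighborhood of $\sigma'$ contained in $V_\sigma$. The condition $\sigma'\subset V_c$ is read as ``$\sigma'$ is a face of some chamber of $V_c$'', so $\sigma \in V_\sigma$ follows from $c\in V_c$. By Proposition~\ref{prop::independence_base_point} one may fix a base point $x \in \Delta$ lying in an apartment $\mathcal{A}$ with $c,\sigma\subset\partial_\infty \mathcal{A}$; I would argue openness at $\sigma$, the case of a general $\sigma'\in V_\sigma$ being analogous after replacing $c$ by a chamber $c'\in V_c$ having $\sigma'$ as a face.

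Since $V_c$ is open, first choose a basic neighborhood $U_{x,r_c,c}\subset V_c$ with $r_c\in[x,\xi_c)$. For a candidate $\sigma''\in U_{x,r_\sigma,\sigma}$ (where $r_\sigma\in[x,\xi_\sigma)$ is to be chosen), the rays $[x,\xi_\sigma)$ and $[x,\xi_{\sigma''})$ share the initial segment $[x,r_\sigma]$, and it suffices to produce an ideal chamber $c''$ with $\sigma''\subset\bar{c''}$ and $c''\in U_{x,r_c,c}$. The construction goes via the strong apartment axiom: using that $\Delta$ carries its complete apartment system, pick an apartment $\mathcal{A}^*$ of $\Delta$ containing both the segment $[x,r_c]$ and the ideal simplex $\sigma''$. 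Then $\mathcal{A}^*$ also contains the geodesic ray $[x,\xi_{\sigma''})$, hence $[x,r_\sigma]$, so the convex intersection $\mathcal{A}\cap\mathcal{A}^*$ contains the closed triangle $T$ with vertices $x,r_c,r_\sigma$. By the apartment axioms there is an affine Coxeter isomorphism $\phi:\mathcal{A}\to\mathcal{A}^*$ fixing $\mathcal{A}\cap\mathcal{A}^*$ pointwise; being a Euclidean isometry, it must send $[x,\xi_c)$ to the ray of $\mathcal{A}^*$ through $r_c$, defining an ideal endpoint $\xi_{c''}\in\partial_\infty \mathcal{A}^*$, and $[x,\xi_\sigma)$ to the ray of $\mathcal{A}^*$ through $r_\sigma$, which is exactly $[x,\xi_{\sigma''})$. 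Consequently $\phi(\sigma)=\sigma''$, the chamber $c'':=\phi(c)$ has $\sigma''$ as a face, and $\phi(r_c)=r_c$ yields $[x,r_c]\subset [x,\xi_c)\cap[x,\xi_{c''})$, so $c''\in U_{x,r_c,c}\cap\st(\sigma'',\partial_\infty \Delta)$ and $\sigma''\in V_\sigma$.

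The main technical obstacle is the form in which the apartment axiom is invoked: one must produce an apartment of $\Delta$ containing both the affine segment $[x,r_c]$ and the ideal simplex $\sigma''$. This is a standard refinement of the apartment axioms for affine buildings with complete apartment systems, but requires care when $[x,r_c]$ meets several chambers of $\Delta$ (one may need to replace $r_c$ by a slightly closer point so that $[x,r_c]$ lies in a single closed chamber, or to invoke a convexity/limit argument over containing chambers). The second delicate point is verifying that $\phi$ is forced to carry $\xi_\sigma$ to $\xi_{\sigma''}$: this follows because $\mathcal{A}\cap\mathcal{A}^*$ contains $T$, so the restriction of $\phi$ to $[x,r_\sigma]$ admits a unique isometric extension inside $\mathcal{A}^*$, which is the geodesic ray $[x,\xi_{\sigma''})$; the same rigidity pins down the image of $\xi_c$ once $[x,r_c]$ is fixed.
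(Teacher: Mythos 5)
Your transport mechanism is fine: once an apartment $\mathcal{A}^*$ containing both the segment $[x,r_c]$ and the ideal simplex $\sigma''$ is available, the isomorphism $\phi\colon\mathcal{A}\to\mathcal{A}^*$ fixing the convex intersection does carry $c$ to a chamber $c''\in U_{x,r_c,c}$ with $\sigma''=\phi(\sigma)$ as a face, and the rigidity arguments pinning down $\phi(\xi_\sigma)$ and $\phi(\xi_c)$ are correct. The genuine gap is the step you dismiss as ``a standard refinement of the apartment axioms'': the existence of an apartment containing a prescribed \emph{segment} $[x,r_c]$ together with a prescribed ideal simplex $\sigma''$ is \emph{not} a general fact about buildings with complete apartment systems, and it is false for the $\sigma''$ you quantify over. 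Notice that your argument nowhere uses the choice of $r_\sigma$: if $\mathcal{A}^*$ existed for every $\sigma''\in U_{x,r_\sigma,\sigma}$ with $r_\sigma$ arbitrary, you would conclude $U_{x,r_\sigma,\sigma}\subseteq V_\sigma$ for \emph{every} $r_\sigma>x$, i.e.\ that $V_\sigma$ contains every type-$I$ simplex whose ray toward $x$ merely germinates along $[x,\xi_\sigma)$. This is false: already in type $\tilde A_2$, if $V_c=U_{x,r_c,c}$ with $r_c$ far from $x$, then for any $c''\in V_c$ the intersection of the two apartments carrying $Q(x,c)$ and $Q(x,c'')$ contains the combinatorial convex hull of the germ chamber and the chamber of $r_c$, which forces the type-$I$ boundary rays $[x,\xi_\sigma)$ and $[x,\xi_{\sigma''})$ to agree up to a distance proportional to $d(x,r_c)$; so a $\sigma''$ branching off $[x,\xi_\sigma)$ very early cannot lie in $V_\sigma$, and correspondingly no apartment contains both $[x,r_c]$ and that $\sigma''$. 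The quantitative heart of the lemma is precisely the choice of $r_\sigma$ as a function of $r_c$, and your proposal never makes it.

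Your suggested repairs do not close this. Shrinking $r_c$ until $[x,r_c]$ lies in one closed chamber does make the common apartment exist (a chamber and an ideal chamber always share an apartment), but it replaces $U_{x,r_c,c}$ by a strictly larger neighborhood that need no longer sit inside $V_c$, so the containment $c''\in V_c$ is lost. For comparison, the paper's proof avoids the apartment-existence issue altogether: it takes $x$ to be a special vertex, observes that every $c'\in U_{x,r,c}$ has $\overline{Q(x,c')}$ containing the fixed affine chamber $C$ of $Q(x,c)$ at $x$, and then reads off that all the $I$-sectors $Q(x,\sigma')$ share the corresponding face of $C$, so that $V_\sigma$ is controlled by a single standard neighborhood $U_{x,r',\sigma}$. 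If you want to keep your route, you must prove: for $r_\sigma$ far enough out (depending on $r_c$), every $\sigma''$ with $[x,r_\sigma]\subset[x,\xi_{\sigma''})$ admits an apartment through $[x,r_c]$ and $\sigma''$ --- for instance by first finding an apartment containing the chamber of $\Delta$ at $r_\sigma$ and an ideal chamber $d\supset\sigma''$, and then arguing that its intersection with $\mathcal{A}$ is large enough to swallow $[x,r_c]$. That argument is the actual content of the lemma and cannot be cited away.
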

\begin{proof}
By the definition of the cone topology, it is enough to consider $V_c$ as a standard open neighborhood of $c$ in  $\Ch(\partial_\infty \Delta)$ with respect to the cone topology. Then $V_c = U_{x,r,c}$ for some point $x \in \Delta$ and $r >0$. Moreover, it is also enough to assume that $x$ is a special vertex in $\Delta$. Then for each of the chambers $c' \subset V_c=U_{x,r,c}$, consider the Weyl cone $Q(x,c') \subset \Delta$ with base point $x$ and ideal chamber $c'$. Since $r >0$, we are dealing with a simplicial complex, and $V_c=U_{x,r,c}$ is open, then all of the cones $Q(x,c')$ must have in common at least the same chamber $C$ of $ Q(x,c) \subset \Delta$ having $x$ as a special vertex. 

Consider now the $\sigma'$-cone $Q(x, \sigma') \subset Q(x,c') \subset \Delta$ with base point $x$ and ideal simplex $\sigma' \subset c'$ of type $I$, for each of the ideal chambers $c' \in U_{x,r,c}$. Then by the above, the $\sigma'$-cones $Q(x,\sigma')$ will also have in common at least a face of $C$ having $x$ as a vertex. This shows that $V_\sigma$ is indeed $U(x,r', \sigma)$, for some $0< r' \leq r$, and thus $V_\sigma$ is a standard open neighborhood of $\sigma$ in $I(\partial_\infty \Delta)$.
\end{proof}

\subsection{Strongly $I$--regular hyperbolic elements}
\label{subsec::I_regular_hyp_elem}

In this section we generalize the notions of regular and strongly regular hyperbolic elements used in \cite[Section 2.1]{CaCi} (and the references therein) when the \cat space is a  Euclidean building $X$.

\begin{definition}
\label{def::regular}
Let $X$ be a Euclidean building of dimension $n \geq 1$, $\gamma$ a hyperbolic automorphism of $X$, $(W,S)$ the associated irreducible finite Coxeter system, and $I \subsetneq S$. Let 
$$\Min(\gamma):=\{x \in X \; | \; d(x, \gamma(x))=|\gamma| \},$$ 
where $|\gamma|$ denotes the translation length of $\gamma$, i.e. $|\gamma|:= \inf\{ d(x, \gamma(x))\; \vert \; x \in X\}$. Since $X$ is a simplicial complex the infimum is always attained. We say $\gamma$ is an \textbf{$I$-regular hyperbolic} if $\Min(\gamma)$ contains a flat of dimension $n-\vert I\vert$. We call $\gamma$ \textbf{strongly $I$-regular hyperbolic} if moreover the two endpoints of some (and hence all) of its translation axes lie, respectively, in the interior of two opposite ideal simplices of type $I$ of the spherical building at infinity $\partial_\infty X$. Here we abuse the notation, and use the type $I$ with respect to the walls in the affine Weyl group $(W^{\text{\tiny{aff}}},S^{\text{\tiny{aff}}})$ of $X$.
\end{definition}

Notice, when $I =\{\emptyset\}$, a flat of dimension $n-\vert \emptyset \vert$ is an apartment of $X$. Still, for $I =\{\emptyset\}$, our Definition \ref{def::regular} of $\emptyset$-regular does not imply regular in the sense of \cite[Section 2.1, Definition 2.1]{CaCi}. This is because in the latter definition, the set $\Min(\gamma)$ contains a unique apartment. Yet, the set $\Min(\gamma)$ from Definition \ref{def::regular} might contain more apartments.  In any case, \cite[Section 2.1, Definition 2.1]{CaCi} implies Definition \ref{def::regular}. Thus, strongly regular hyperbolic in the sense of \cite[Section 2.1]{CaCi} is strongly $\emptyset$-regular hyperbolic in the sense of  Definition \ref{def::regular}.

\begin{definition}
Let $X$ be a  Euclidean building, $(W,S)$ the associated irreducible spherical Coxeter system, and $I \subsetneq S$. A geodesic line $\ell$ is called \textbf{strongly $I$-regular} if its endpoints lie, respectively, in the interior of two opposite ideal simplices of type $I$ of the spherical building at infinity $\partial_\infty X$. (Here we abuse the notation and by the type $I$ we mean the walls corresponding to the affine Weyl group $(W^{\text{\tiny{aff}}},S^{\text{\tiny{aff}}})$.)
\end{definition}

\begin{remark}
By \cite[Part II, Prop. 6.2(3)]{BH99} we know that $\Min(\gamma)$ is a closed and convex subset of $X$ (i.e. convex means that if two points are in $\Min(\gamma)$ then also is the geodesic between them). Moreover, $\Min(\gamma)$ is $\gamma$-invariant, i.e. $\gamma(\Min(\gamma))=\Min(\gamma)$. The structure of $\Min(\gamma)$, for $\gamma$ hyperbolic, is given in \cite[Part II, Prop. 6.8]{BH99}.
\end{remark}

\begin{definition}
Let $X$ be a Euclidean building of dimension $n \geq 1$, and $\gamma$ a hyperbolic automorphism of $X$. We define the ideal boundary of the set $\Min(\gamma)$, that is denoted by $\partial_\infty \Min(\gamma)$, to be the set of all equivalence classes of geodesic rays that are contained in $\Min(\gamma)$. Two geodesic rays are in the same equivalent class (i.e. they determine the same point in $\partial_\infty \Min(\gamma)$) if they are asymptotic (see \cite[Part II, Def. 8.1]{BH99}). 
\end{definition}

Notice that $\partial_\infty \Min(\gamma)$ is a subset of $\partial_\infty X$. Moreover, from $\Min(\gamma)$ being convex we have that for any fixed point $p \in \Min(\gamma)$, there is a geodesic ray with basepoint $p$ and endpoint $\xi$ that is entirely contained in $\Min(\gamma)$, for every $\xi \in \partial_\infty \Min(\gamma)$ (see \cite[Part II, Prop. 8.2, Lem. 8.3]{BH99}). Using just the definitions, and the fact that $\gamma$ acts like a translation, thus sending the geodesic rays in $ \Min(\gamma)$ to parallel (i.e. asymptotic) geodesic rays, one can deduce that $\gamma$ fixes pointwise the ideal boundary $\partial_\infty \Min(\gamma)$.
 
\begin{lemma}
Let $X$ be a Euclidean building of dimension $n \geq 1$, and $\gamma$ a hyperbolic automorphism of $X$. Then for every $\xi \in \partial_\infty \Min(\gamma)$ we have that $\gamma(\xi)=\xi$.
\end{lemma}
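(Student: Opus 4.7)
My plan is to exploit the fact that $\Min(\gamma)$ is $\gamma$-invariant and that $\gamma$ acts as an isometry by translation on it. Fix $\xi \in \partial_\infty \Min(\gamma)$ and choose a basepoint $p \in \Min(\gamma)$. By the paragraph preceding the lemma (invoking \cite[Part II, Prop. 8.2, Lem. 8.3]{BH99}), there is a geodesic ray $\rho \colon [0,\infty) \to \Min(\gamma)$ with $\rho(0) = p$ that represents $\xi$, i.e., is entirely contained in $\Min(\gamma)$.

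Since $\Min(\gamma)$ is $\gamma$-invariant and $\gamma$ is an isometry of $X$, the composition $\gamma \circ \rho \colon [0,\infty) \to \Min(\gamma)$ is another geodesic ray, this one based at $\gamma(p)$. Because $\gamma$ extends continuously to $\partial_\infty X$ with respect to the cone topology (so the endpoint is preserved under the extension), the ray $\gamma \circ \rho$ represents the ideal point $\gamma(\xi) \in \partial_\infty X$. So to conclude $\gamma(\xi) = \xi$ it suffices to show that $\rho$ and $\gamma \circ \rho$ are asymptotic, which by \cite[Part II, Def. 8.1]{BH99} just means that $t \mapsto d(\rho(t), \gamma(\rho(t)))$ is bounded on $[0,\infty)$.

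The key point is that this distance function is in fact constant. Indeed, every point $\rho(t)$ lies in $\Min(\gamma)$ by construction, so by the definition of $\Min(\gamma)$ we have
\[
d(\rho(t), \gamma(\rho(t))) \;=\; |\gamma|
\]
for every $t \geq 0$. Hence $\rho$ and $\gamma \circ \rho$ are two geodesic rays at constant distance $|\gamma|$ from one another, therefore asymptotic. This forces the two ideal endpoints to coincide, yielding $\gamma(\xi) = \xi$.

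There is no real obstacle here: the only subtle step is checking that $\gamma \circ \rho$ represents the cone-topology image $\gamma(\xi)$ of $\xi$, which is immediate from the fact that the action of $\Aut(\Delta)$ on $\Delta \cup \partial_\infty \Delta$ is continuous in the cone topology (recalled at the beginning of Section \ref{sec::bg_material}). One could alternatively invoke the product decomposition $\Min(\gamma) \cong Y \times \mathbb{R}$ from \cite[Part II, Prop. 6.8]{BH99} on which $\gamma$ acts as a translation of the $\mathbb{R}$-factor; the argument above avoids having to name that decomposition explicitly.
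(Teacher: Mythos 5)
Your argument is correct and is essentially the same as the paper's own proof: both take the geodesic ray $[p,\xi)$ inside $\Min(\gamma)$, apply $\gamma$, observe that the two rays are at constant distance $|\gamma|$ (hence asymptotic), and conclude that the ideal endpoints coincide. The only cosmetic difference is that you spell out why $\gamma\circ\rho$ represents $\gamma(\xi)$ via continuity of the action in the cone topology, which the paper leaves implicit.
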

\begin{proof}
First notice that the translation length $\vert \gamma  \vert$ of $\gamma$ is strictly positive, $\vert \gamma  \vert >0$. 
Fix a point $p \in \Min(\gamma)$. Take $\xi \in \partial_\infty \Min(\gamma)$ and consider the geodesic ray $\sigma:[0,\infty) \to \Min(\gamma)$ from $p$ to $\xi$, that is also denoted by $[p,\xi)$. This exists by the remark made just above. Then, since $\gamma$ leaves invariant $\Min(\gamma)$, we have that $\gamma([p,\xi))= [\gamma(p), \gamma(\xi))$ is still a geodesic ray in $\Min(\gamma)$, this is exaclty $\gamma\circ \sigma : [0,\infty) \to \Min(\gamma)$. But the two geodesic rays $\sigma$ and $\gamma\circ \sigma$ are asymptotic, as $d(\sigma(t), \gamma(\sigma(t)))= \vert \gamma\vert$ for every $t \in [0,\infty)$. The conclusion follows since $\xi=\gamma(\xi)$.
\end{proof}

As in \cite[Section 2.1]{CaCi} we also record in the next lemmas some easy description of strongly $I$-regular hyperbolic automorphisms. 

\begin{lemma}(see \cite[Lemma 2.3]{CaCi})
\label{lem:CharSRH}
Let $X$ be a Euclidean building of dimension $n\geq1$, $\gamma \in \Aut(X)$ a type-preserving hyperbolic automorphism, and $I \subsetneq S$. If the two endpoints of some (and hence all) of the translation axes of $\gamma$ lie, respectively, in the interior of two opposite ideal simplices of type $I$ of the spherical building at infinity $\partial_\infty X$, then $\gamma$ is strongly $I$-regular hyperbolic. Moreover, if $\gamma$ is strongly $\emptyset$-regular hyperbolic, then $\Min(\gamma)$ is an apartment of $X$.
\end{lemma}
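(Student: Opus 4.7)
The plan is to establish strong $I$-regularity by exhibiting, for any translation axis $\ell$ of $\gamma$ with endpoints $\xi_\pm \in \sigma_\pm$, an explicit flat of dimension $n-|I|$ sitting inside $\Min(\gamma)$; the ``moreover'' part will then follow from the uniqueness of apartments in $X$ determined by two opposite ideal chambers.

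First I would observe that, since $\gamma$ is type-preserving and fixes $\xi_+$ lying in the interior of $\sigma_+$, it must preserve $\sigma_+$ as a simplex (it is the unique open simplex containing $\xi_+$); type-preservation then forces $\gamma$ to fix each vertex of $\sigma_+$ individually, because the vertices of a chamber carry pairwise distinct types, and consequently $\gamma$ fixes $\sigma_+$ pointwise. Symmetrically, $\gamma$ fixes $\sigma_-$ pointwise. Next I would pick an apartment $\mathcal{A}$ of $X$ containing the axis $\ell$; by \cite[Theorem 11.79]{AB}, $\partial_\infty \mathcal{A}$ is a spherical apartment, and it contains the simplices $\sigma_+$ and $\sigma_-$ because it contains their interior points $\xi_\pm$.

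Inside the Euclidean apartment $\mathcal{A}$, each ideal point $\eta \in \sigma_+$ has a unique opposite $\eta' \in \sigma_-$ inside $\partial_\infty \mathcal{A}$, and the corresponding geodesic line $\ell_\eta \subset \mathcal{A}$ is parallel to $\ell$. The union $F := \bigcup_{\eta \in \sigma_+} \ell_\eta$ is a flat affine subspace of $\mathcal{A}$ swept out by an $(n-1-|I|)$-dimensional simplex of parallel lines, so $\dim F = (n-1-|I|)+1 = n-|I|$. The key claim is that $F \subset \Min(\gamma)$. Fixing $\eta$ and a point $p \in \ell_\eta$: since $\gamma$ fixes the endpoints $\eta, \eta'$ of $\ell_\eta$, the image $\gamma(\ell_\eta)$ is a geodesic line in $X$ sharing these two endpoints at infinity with $\ell_\eta$, and by the \cat flat strip theorem the two lines cobound a flat Euclidean strip. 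Combined with the fact that $\gamma$ translates along $\ell$ by $|\gamma|$ and that $\ell, \ell_\eta$ sit in the common apartment $\mathcal{A}$ at constant distance, a flat-strip computation, modeled on the chamber case of \cite[Lemma 2.3]{CaCi}, yields $d(p, \gamma(p)) = |\gamma|$. Hence every $\ell_\eta$ lies in $\Min(\gamma)$ and $F \subset \Min(\gamma)$; together with the hypothesis on the endpoints, this verifies Definition \ref{def::regular}.

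For the ``moreover'' statement, assume $\gamma$ is strongly $\emptyset$-regular hyperbolic, so $\sigma_\pm$ are opposite ideal chambers $c_\pm$. Then the flat $F$ built above has dimension $n$ and must coincide with the apartment $\mathcal{A}$, so $\mathcal{A} \subset \Min(\gamma)$. For the reverse inclusion I would invoke \cite[Part II, Prop. 6.8]{BH99}: $\Min(\gamma)$ splits as $Y \times \RR$, so every $q \in \Min(\gamma)$ lies on a geodesic line parallel to $\ell$, whose two endpoints at infinity necessarily sit in $c_+$ and $c_-$ respectively. But two opposite ideal chambers determine a unique apartment of $X$, forcing that line, and therefore $q$, to lie in $\mathcal{A}$. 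Hence $\Min(\gamma) = \mathcal{A}$. The main obstacle will be the flat-strip computation that lifts the axial translation length of $\gamma$ from $\ell$ to every parallel line $\ell_\eta$: one knows immediately that $d(p, \gamma(p)) \geq |\gamma|$, but the reverse estimate requires carefully exploiting both that $\gamma$ fixes all of $\sigma_+ \cup \sigma_-$ pointwise and that $\ell$ is a translation axis, so as to use the flat strip between $\ell$ and $\ell_\eta$ inside $\mathcal{A}$ to transfer the translation length. Everything else is a straightforward unpacking of definitions or an appeal to standard \cat structure theorems.
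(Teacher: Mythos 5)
Your overall architecture matches the paper's (show $\gamma$ fixes $\sigma_\pm$ pointwise, build a flat $F$ of dimension $n-\vert I\vert$ inside an apartment containing the axis, show $F\subset\Min(\gamma)$, and for the ``moreover'' part use that every axis lies in the unique apartment determined by $c_\pm$), and your treatment of the $\emptyset$-regular case is correct. But the heart of the proof --- the claim $F\subset\Min(\gamma)$ --- is not actually established, and the route you sketch for it would fail as stated. First, a smaller problem: for $\eta\in\sigma_+$ with $\eta\neq\xi_+$, the line $\ell_\eta$ from $\eta'$ to $\eta$ is \emph{not} parallel to $\ell$ (parallel lines in a \cat space are at bounded Hausdorff distance and hence share their endpoints at infinity), and it is not unique until you fix a base point; once you do, $\bigcup_{\eta\in\overline{\sigma_+}}\ell_\eta$ is a double cone, not an affine subspace. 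This is repairable (take the affine span, or use convexity of $\Min(\gamma)$), but it signals that the geometric picture behind your key computation is off: in particular there is no ``flat strip between $\ell$ and $\ell_\eta$'' to exploit, since non-parallel lines bound no flat strip.

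Second, and decisively, the flat strip theorem applied to the genuinely parallel pair $\ell_\eta$ and $\gamma(\ell_\eta)$ does not by itself give $d(p,\gamma(p))=\vert\gamma\vert$: writing $\gamma(p_0+tu_\eta)=\gamma(p_0)+(t+s)u_\eta$ for the induced map between the two parallel lines in $\mathcal{A}$, one gets $d(p,\gamma(p))^2=\vert\gamma\vert^2+2s\vert\gamma\vert\cos\theta+s^2$, so you must rule out a nonzero slide $s$. That requires an extra input you never supply --- e.g.\ that an isometry fixing the ideal point $\eta$ shifts the Busemann function $b_\eta$ by a constant, which forces $s=0$. You correctly flag this as ``the main obstacle,'' but a proposal that defers its central estimate to an unspecified computation, sketched via a flat strip that does not exist, has a genuine gap. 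The paper sidesteps the computation entirely: the flat $F$ is \emph{uniquely determined} by $\ell$ and $\sigma_\pm$ (strong $I$-regularity of the axis), hence $\gamma$-invariant; since $\sigma_\pm$ are open subsets of full dimension in the sphere $\partial_\infty F$ and $\gamma$ is type-preserving and fixes them pointwise, $\gamma$ acts trivially on $\partial_\infty F$, so $\gamma\vert_F$ is a Euclidean translation, necessarily by the vector of length $\vert\gamma\vert$ along $\ell$, giving $F\subseteq\Min(\gamma)$ at once. I recommend either adopting that invariance argument or writing out the Busemann-function step explicitly.
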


When $\gamma$ is strongly $\emptyset$-regular hyperbolic,  the apartment $\Min(\gamma)$ will henceforth be called the \textbf{translation apartment} of $\gamma$; this apartment is uniquely determined. 

\begin{proof}[Proof of Lemma~\ref{lem:CharSRH}]
Since $\gamma$ is hyperbolic, and since the endpoints of some $\gamma$-axis, say $\ell$, lie respectively in the interior of two opposite simplices $\sigma_+, \sigma_- \subset \partial_\infty X$, then the same holds for all $\gamma$-axes, because of asymptoticity. As $\gamma$ is type-preserving, we get that $\gamma$ fixes pointwise both simplices $\sigma_+, \sigma_-$ (see \cite[Part II, Prop. 6.8]{BH99}). 

By considering a point in $\ell$, and say $\sigma_+$, from the axioms of a building, one can construct a flat $F$ of dimension $n - \vert I\vert$ of $X$ containing $\ell$, $\sigma_+$ and $\sigma_-$ in its boundary. The flat $F$ is part of an apartment of $X$.  Now, because $\gamma$ is an automorphism of an affine building, and the translation axis $\ell$ is strongly $I$-regular, the flat $F$ is uniquely determined by $\ell$ and $\sigma_{\pm}$.  Thus, $F$ must be invariant under $\gamma$. In particular, $\gamma$ preserves $\partial_\infty F$, but fixes pointwise $\sigma_+,\sigma_-$ in $\partial_\infty F$. Therefore, $\gamma$ must act trivially on $\partial_\infty F$ (since $\gamma$ is type-preserving); in other words $\gamma$ acts by translations on $F$. This proves that $F \subseteq \Min(\gamma)$, and so $\gamma$ is $I$-regular hyperbolic, and by hypothesis, $\gamma$ is strongly $I$-regular. 

Suppose now $\gamma$ is moreover strongly $\emptyset$-regular. Then $F$ is an apartment $A$ of $X$. Since any translation axis $\ell'$ of $\gamma$ has the same endpoints as $\ell$, and those lie in the interior of the $c_+,c_-$, respectively, then $\ell'$ must be entirely contained in $A$. Thus $A = \Min(\gamma)$. 
\end{proof}

\begin{remark}
Given a locally finite thick Euclidean building $X$, notice that any of the type-preserving hyperbolic elements of $\Aut(X)$ is strongly $I$-regular, for some $I \subsetneq S$, where $(W,S)$ is the associated irreducible spherical Coxeter system with $X$. 
\end{remark}

\begin{lemma}(see \cite[Lemma 2.5]{CaCi})\label{lem:LocalCrit:SRL}
Let $X$ be a Euclidean building, $I \subsetneq S$, and $\ell$ be a bi-infinite geodesic line in $X$. If $\ell$ contains two special vertices that are contained in the respective interiors of two and opposite $I$-sectors  in a common apartment of $X$, then $\ell$ is strongly $I$-regular. Conversely, if $\ell$ is strongly $I$-regular and contains three special vertices, then at least two of them must be contained in the interiors of two opposite $I$-sectors of a common apartment. 
\end{lemma}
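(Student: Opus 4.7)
The proof splits into two implications, which I would handle separately. For the forward direction, since $x_1, x_2 \in \mathcal{A}$ and apartments are convex, the segment $[x_1, x_2]$ lies entirely in $\mathcal{A}$, so the bi-infinite geodesic extension $\ell$ of this segment coincides with the Euclidean line through $x_1$ and $x_2$ inside $\mathcal{A}$. Let $C \subset \mathcal{A}$ denote the open convex cone of directions pointing to the interior of $\sigma_1$ at infinity, so that $Q_1 = y_1 + C$ and $Q_2 = y_2 - C$ for appropriate base points $y_1, y_2$, and write $x_1 = y_1 + v_1$, $x_2 = y_2 - v_2$ with $v_1, v_2$ in the interior of $C$. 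The hypothesis that $Q_1$ and $Q_2$ are opposite $I$-sectors situated in the single apartment $\mathcal{A}$ forces the translation $y_2 - y_1$ to sit suitably between the two cones, from which $x_2 - x_1 \in -C$ follows. Since $-C$ is the open cone of directions pointing to the interior of $\sigma_2$, the Euclidean ray continuing $\ell$ past $x_2$ terminates in the interior of $\sigma_2$, and symmetrically the ray continuing past $x_1$ terminates in the interior of $\sigma_1$; this is precisely the definition of $\ell$ being strongly $I$-regular.

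For the converse, assume $\ell$ is strongly $I$-regular with endpoints $\xi_-, \xi_+$ in the interiors of opposite type-$I$ ideal simplices $\sigma_-, \sigma_+$. Invoking the standard building axiom that any two opposite ideal simplices lie in a common apartment, fix an apartment $\mathcal{A}$ containing both $\sigma_{\pm}$; the line $\ell$, being the unique bi-infinite geodesic joining $\xi_-$ and $\xi_+$, must lie in $\mathcal{A}$. Label the three special vertices of $\ell$ as $v_1, v_2, v_3$ in the order they occur along $\ell$ from the $\xi_-$-side to the $\xi_+$-side, and select the outer pair $v_1, v_3$. Inside $\mathcal{A}$ consider the two $I$-sectors $Q(v_3, \sigma_-)$ and $Q(v_1, \sigma_+)$; these are opposite $I$-sectors in the common apartment $\mathcal{A}$. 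Because $v_1 - v_3$ is a positive multiple of the direction pointing toward $\xi_-$, which lies in the interior of $\sigma_-$, the vertex $v_1$ lies in the interior of $Q(v_3, \sigma_-)$, and by symmetry $v_3$ lies in the interior of $Q(v_1, \sigma_+)$. The assumption that $\ell$ contains three special vertices is used to ensure that one can always pick such a pair whose vertex-types are compatible with being apices of an $I$-sector having ideal simplex of type $I$, so that the two sectors $Q(v_j, \sigma_{\pm})$ are legitimately defined in the building.

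The main obstacle is the forward implication, specifically the step $x_2 - x_1 \in -C$. This relies on correctly interpreting what it means for two $I$-sectors to be ``opposite'' within the single apartment $\mathcal{A}$, namely that the base points $y_1, y_2$ are configured so that a positive combination of the cones $C$ and $-C$ covers the translation between the two apices. Once this positional property of opposite $I$-sectors is pinned down, the convex-cone arithmetic needed to conclude the direction of $\ell$ at infinity is routine, and the converse direction then follows by the direct construction of the two sectors inside the apartment containing $\sigma_-$ and $\sigma_+$.
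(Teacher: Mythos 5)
The forward direction has a genuine gap at its very first step: you assert that, because $[x_1,x_2]\subset\mathcal{A}$, ``the bi-infinite geodesic extension $\ell$ of this segment coincides with the Euclidean line through $x_1$ and $x_2$ inside $\mathcal{A}$.'' In a Euclidean building a geodesic segment does \emph{not} extend uniquely to a bi-infinite geodesic: the line $\ell$ is given in advance and may branch away from $\mathcal{A}$ at a wall beyond $x_1$ or $x_2$. Your cone arithmetic therefore only shows that the extension $\ell'$ of $[x_1,x_2]$ \emph{inside} $\mathcal{A}$ is strongly $I$-regular, not that $\ell$ is. The paper closes exactly this gap with a second step that your argument is missing: take an apartment $\mathcal{B}$ containing $\ell$; by the building axiom there is an isomorphism $\mathcal{A}\to\mathcal{B}$ fixing $x_1$ and $x_2$, hence the segment $[x_1,x_2]$; being an isometry of Euclidean apartments it carries $\ell'$ to the unique extension of $[x_1,x_2]$ within $\mathcal{B}$, which is $\ell$, and since it preserves the wall structure, $\ell$ inherits strong $I$-regularity from $\ell'$.

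Two further points. Your computation $x_2-x_1\in -C$ is only valid when the two opposite $I$-sectors share a base point ($y_1=y_2$), which is the intended reading and is what the paper's proof assumes explicitly; with unrelated base points the implication itself fails (in $\mathbb{R}^2$ with $C$ the open first quadrant, $y_1=(0,0)$, $x_1=(1,1)$, $y_2=(12,2)$, $x_2=(11,1)$ put the $x_i$ in the interiors of the two sectors, yet $x_2-x_1=(10,0)$ lies on a wall). So the ``main obstacle'' you identify is resolved by fixing the definition, whereas the genuinely missing idea is the transfer from $\ell'$ to $\ell$ above. In the converse, your two sectors $Q(v_3,\sigma_-)$ and $Q(v_1,\sigma_+)$ have different apices and hence are not opposite sectors in the required sense --- base both at the middle vertex $v_2$ instead (which is the role of the third vertex) --- and $\ell$ need not lie in an a priori chosen apartment $\mathcal{A}$ containing $\sigma_\pm$, since geodesics joining two antipodal ideal points are not unique in non-regular directions; start from an apartment containing $\ell$ itself.
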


\begin{proof}
Let $v, v' \in \ell$ be special vertices, respectively contained in the interiors of two opposite $I$-sectors $s,s'$ with the same base point, and which are contained in some apartment $\mathcal{A}$ of $X$. Then $\mathcal{A}$ contains the geodesic segment $[v, v']$, which cannot be parallel to any of the boundary-walls of the closure of the $I$-sectors $s,s'$. Extend this geodesic segment $[v,v']$ to a bi-infinite geodesic line $\ell'$ in $\mathcal{A}$. Then $\ell'$ is contained in the union of the open sectors $s,s'$, and $\ell'$ is strongly $I$-regular. Moreover, $[v,v']\subset \ell \cap \ell'$.

Let $\mathcal{B}$ be an apartment of $X$ containing $\ell$. Then both $\mathcal{A}$ and $\mathcal{B}$ contain the special vertices $v,v'$, that are in particular simplices. Thus, by the last axiom of the definition of a building, there exists an isomorphism between $\mathcal{A}$ and $\mathcal{B}$ which fixes $v,v'$ pointwise, thus also the unique geodesic $[v,v']$ between them. In particular, $\ell'$ is sent to $\ell$, which makes $\ell$ strongly $I$-regular as well.

The converse statement is straightforward. 
\end{proof}

\subsection{Existence of strongly $I$-regular elements}
\label{subsec::existence_strongly_regular}
In this section we generelize the existence of strongly regular hyperbolic elements from \cite[Section 2.2]{CaCi} to the strongly $I$-regular hyperbolic ones. But first we need some preparation. Although the results and the proofs  in this section about strongly $I$-regular elements are very similar with the ones in \cite{CaCi} for strongly regular elements, we rewrite them for the reader convenience.  The convergence in $\partial_\infty \Delta$ is to be understood with respect to the cone topology, which turns $\Delta \cup \partial_\infty \Delta$ into a compact space (see~\cite[Chap.II.8]{BH99}).

\begin{lemma}(See also \cite[Lemma 2.6]{CaCi})
\label{existance_reg_element}
Let $(W_{aff},S_{aff})$ be an irreducible Euclidean Coxeter system, $\mathcal{A}$ be the associated Euclidean Coxeter complex, $(W,S)$ the associated irreducible spherical Coxeter system, and $I \subsetneq S$. Then $W_{aff}$ contains strongly $I$-regular hyperbolic elements.
\end{lemma}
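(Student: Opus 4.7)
The plan is to exhibit the desired element as an explicit translation inside the maximal abelian normal subgroup $T \trianglelefteq W_{aff}$ introduced in the background section, which is a rank-$n$ lattice in the translation vector space $V$ of $\mathcal{A}$ (with $n = \dim \mathcal{A}$) on which the spherical Weyl group $W \subset W_{aff}$ acts linearly. A nonzero element $t \in T$ acts as a nontrivial Euclidean translation on $\mathcal{A}$, so it is automatically hyperbolic with constant displacement $|v_t| > 0$, and every line parallel to its translation vector $v_t$ is a translation axis. By Lemma \ref{lem:CharSRH}, it therefore suffices to produce $t \in T$ whose translation vector lies in the relative interior of the face
$$F_I := \{x \in V : \langle \alpha, x\rangle = 0 \text{ for every simple root } \alpha \text{ with } \partial \alpha \in I, \; \langle \alpha, x\rangle > 0 \text{ otherwise}\}$$
of the fundamental Weyl chamber $C \subset V$ associated to $c_+$; the two endpoints of the axis of $t$ will then lie in the interiors of the opposite ideal simplices of type $I$ cut out by $F_I$ and $-F_I$ in $\partial_\infty \mathcal{A}$.

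To construct such a $t$, I would first pick any lattice vector $u \in T \cap C$; such $u$ exists because $T$ is a full-rank lattice in $V$ and $C$ is a nonempty open cone, so $T \cap C$ is infinite. Then, averaging $u$ over the finite parabolic subgroup $W_I \subseteq W$, I set
$$t := \sum_{w \in W_I} w(u).$$
Since $W_I$ normalizes $T$, we have $t \in T$; by construction $t$ is $W_I$-invariant, so $\langle \alpha, t\rangle = 0$ for every simple root $\alpha$ with $\partial \alpha \in I$, which places $t$ on all the walls indexed by $I$.

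The remaining step is to show that $t$ lies strictly on the positive side of each of the other simple walls, i.e. that $\langle \alpha, t\rangle > 0$ whenever $\partial \alpha \in S \setminus I$. Rewriting
$$\langle \alpha, t\rangle = \sum_{w \in W_I} \langle w^{-1}(\alpha), u\rangle$$
and invoking the standard fact that $W_I$ permutes the positive roots lying outside the subroot system generated by $\{\beta : \partial\beta \in I\}$ among themselves, every $w^{-1}(\alpha)$ is a positive root and each pairing $\langle w^{-1}(\alpha), u\rangle$ is strictly positive because $u \in C$. Hence $t \in T \cap F_I$, translation by $t$ is hyperbolic with axis endpoints in the interiors of two opposite ideal simplices of type $I$, and Lemma \ref{lem:CharSRH} identifies $t$ as the desired strongly $I$-regular hyperbolic element of $W_{aff}$. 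The main subtlety, which this averaging argument is precisely designed to overcome, is that when $|I| = |S| - 1$ the relative interior of $F_I$ degenerates to a single ray, so mere density of $T$ in $V$ is insufficient; it is the $W_I$-symmetry that forces the averaged lattice vector to land inside this lower-dimensional face of the chamber.
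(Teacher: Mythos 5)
Your proof is correct, and it takes a genuinely different route from the paper. The paper argues geometrically: it fixes two opposite $I$-sectors with common base point $x$, asserts (from the structure of the Euclidean Coxeter complex) that each contains a special vertex of the same type in its relative interior, and takes the unique translation carrying one such vertex to the other; Lemma \ref{lem:LocalCrit:SRL} then certifies the axis as strongly $I$-regular and Lemma \ref{lem:CharSRH} finishes. You instead work root-theoretically inside the translation lattice $T$: starting from any lattice vector in the open chamber and averaging over $W_I$, you produce an explicit $t\in T$ with $\langle\alpha,t\rangle=0$ for $\alpha\in I$ and $\langle\alpha,t\rangle>0$ otherwise (the positivity resting on the standard fact that $W_I$ stabilizes $\Phi^+\setminus\Phi_I$), so $t$ lands in the relative interior of the type-$I$ face and Lemma \ref{lem:CharSRH} applies directly. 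The trade-off: the paper's argument stays at the level of the building and avoids root-system input, but leaves implicit the nontrivial point that special vertices of a fixed type actually occur in the relative interior of the lower-dimensional cone $F_I$; your averaging construction addresses exactly that point explicitly (and you correctly flag that this is where density alone fails when $|I|=|S|-1$), at the cost of invoking the crystallographic structure, which is of course available for an irreducible Euclidean Coxeter system. One cosmetic remark: you should note explicitly that $t\neq 0$, which follows since $I\subsetneq S$ guarantees at least one simple root $\alpha$ with $\langle\alpha,t\rangle>0$.
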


\begin{proof}
Recall that $W_{aff}$ acts transitively on the set of special vertices of $(W_{aff},S_{aff})$ that are in the same equivalence class (i.e. of the same type). In particular, the translation group of $W_{aff}$ acts simple-transitively on the above-mentioned set of special vertices. Now let a special vertex $x$ in $\mathcal{A}$ and consider two opposite $I$-sectors of $\mathcal{A}$ with the same base point $x$. By the construction of the corresponding Coxeter complex associated with $(W_{aff},S_{aff})$, there are two special vertices  $v$ and $v'$ in the same equivalence class which are contained in the interiors of the two  chosen opposite $I$-sectors, respectively. Then the unique geodesic line $\ell$ through $v$ and $v'$ is contained in the flat generated by the two chosen opposite $I$-sectors, and so $\ell$ is strongly $I$-regular. Then the unique translation $t$ mapping $v$ to $v'$ is an element of $W_{aff}$ having $\ell$ as a translation axis. Thus $t$ is strongly $I$-regular hyperbolic by Lemma~\ref{lem:CharSRH}. 
\end{proof} 

The following basic construction is a key step in proving the existence of strongly $I$-regular elements in groups acting on Euclidean buildings. 

\begin{lemma}(See also \cite[Lemma 2.7]{CaCi})
\label{lem:basic-const}
Let $X$ be a locally finite thick Euclidean building, $(W,S)$ the associated irreducible spherical Coxeter system, $I \subsetneq S$, and $G$ a type-preserving subgroup of $\Aut(X)$, with cocompact action on $X$. Let $\rho \colon \RR \to X$ be a geodesic map such that $\ell: = \rho(\RR)$ is a strongly $I$-regular geodesic line, and $C >0$ be such that $\rho(nC)$ is a special vertex of a given type, for all $n \in \NN$. 

Then there exist a sequence $\{g_{n}\}_{n \geq 0} $ in $ G$ and  an increasing sequence $f(n)$ of positive integers such that for every $m \geq 0$ and every $r \in [-m, m]$, the sequence $n \mapsto g_n \circ \rho({f(n)C} + r)$ is constant on $\{n \; | \; n \geq m\}$. Moreover, the map 
$$\rho' \colon \RR \to X :  r \mapsto  \lim\limits_{n \to \infty} g_n \circ \rho({f(n)}C +r)$$
is a geodesic line, and its image $\rho'(\RR)$ is a strongly $I$-regular geodesic line. 
\end{lemma}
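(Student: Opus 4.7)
The approach is a cocompactness plus local-finiteness pigeonhole, followed by a Cantor-diagonal extraction, concluded by an appeal to Lemma~\ref{lem:LocalCrit:SRL}. First, pick a compact $K\subset X$ with $GK=X$, and for each $n\in\NN$ choose $h_n\in G$ with $h_n\rho(nC)\in K$. Because $G$ is type-preserving and $\rho(nC)$ is a special vertex of a fixed type, each $h_n\rho(nC)$ is a special vertex of that type lying in $K$. Local finiteness forces the set of such vertices inside $K$ to be finite, so after extracting a subsequence one may assume $h_n\rho(nC)=v_0$ for every $n$, where $v_0$ is a single special vertex of $X$.

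Set $\tilde\rho_n(r):=h_n\circ\rho(nC+r)$; these are geodesic lines through $v_0$ at $r=0$. For every $m\geq 0$, the segment $\tilde\rho_n|_{[-m,m]}$ lies in the closed ball $\bar B(v_0,m)$, which, by local finiteness, is a finite subcomplex of $X$; there are therefore only finitely many isometric parameterized geodesic segments of length $2m$ through $v_0$. A nested extraction (one refinement per $m$) followed by a diagonal selection then produces a strictly increasing $f\colon\NN\to\NN$ and elements $g_n:=h_{f(n)}\in G$ such that for every $m$ and every $r\in[-m,m]$ the sequence $n\mapsto g_n\circ\rho(f(n)C+r)=\tilde\rho_{f(n)}(r)$ is constant on $\{n\geq m\}$. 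Setting $\rho'(r):=\lim_{n\to\infty}g_n\circ\rho(f(n)C+r)$ (an eventually constant limit at each $r$), one gets $d(\rho'(r_1),\rho'(r_2))=|r_1-r_2|$ by choosing $n$ large enough, so $\rho'$ is an isometric embedding $\RR\to X$, hence a geodesic line.

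For strong $I$-regularity of $\rho'(\RR)$ I plan to invoke the forward implication of Lemma~\ref{lem:LocalCrit:SRL}: it is enough to exhibit two special vertices on $\rho'(\RR)$ lying in the interiors of two opposite $I$-sectors of a common apartment of $X$. Fix integers $j_1<j_2$ and take $n$ large enough that $\max(|j_1|,|j_2|)C\leq n$ and $f(n)+j_i\geq 0$; then $\rho'(j_iC)=g_n\circ\rho((f(n)+j_i)C)$ is a special vertex of the given type, since $g_n$ is type-preserving. Fix an apartment $\mathcal{A}_0$ of $X$ containing $\ell=\rho(\RR)$: the endpoints $\xi_\pm$ of $\ell$ lie in the interiors of two opposite type-$I$ simplices $\sigma_\pm\subset\partial_\infty\mathcal{A}_0$. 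Pick $z\in\ell$ strictly between $\rho((f(n)+j_1)C)$ and $\rho((f(n)+j_2)C)$: the two $I$-sectors in $\mathcal{A}_0$ based at $z$ with boundaries at infinity $\sigma_+$ and $\sigma_-$ are opposite, and their interiors contain $\rho((f(n)+j_2)C)$ and $\rho((f(n)+j_1)C)$ respectively, because on each side of $z$ the open ray $[z,\xi_\pm)$ lies inside the interior of the corresponding $I$-sector. Transporting this data by the automorphism $g_n$ produces two opposite $I$-sectors inside the apartment $g_n(\mathcal{A}_0)$ whose interiors contain $\rho'(j_1C)$ and $\rho'(j_2C)$, and Lemma~\ref{lem:LocalCrit:SRL} concludes.

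The main obstacle I anticipate is not a single clever idea but the careful bookkeeping of the nested-diagonal construction, so that simultaneous stabilization of $\tilde\rho_{f(n)}|_{[-m,m]}$ at every scale $m$ is achieved together with a strictly increasing $f$; once this is in place, type-preservation together with local finiteness carries the rest of the argument, including the transport of the opposite-sector configuration from $\mathcal{A}_0$ to $g_n(\mathcal{A}_0)$ in the final step.
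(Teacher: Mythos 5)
Your proposal follows the same route as the paper: use cocompactness, type-preservation and local finiteness to bring all the special vertices $\rho(nC)$ to a single vertex $v_0$, then a nested/diagonal extraction to stabilize the translated segments on every $[-m,m]$, observe that the eventually-constant limit is an isometric embedding, and finish with Lemma~\ref{lem:LocalCrit:SRL}; your last step even supplies the opposite-sector configuration (transported by $g_n$ from an apartment containing $\ell$) that the paper leaves implicit.

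The one place where your justification does not stand as written is the finiteness claim driving the extraction: it is false that a finite subcomplex contains only finitely many parameterized geodesic segments of length $2m$ through $v_0$ (already in a single apartment there are uncountably many directions through a point). The pigeonhole must instead be applied to the data that \emph{is} finite, namely the tuples of special vertices $\bigl(h_n\rho((n+j)C)\bigr)_{j}$ for $|jC|\le m+C$, which all lie in the finite set of special vertices of a fixed ball around $v_0$; two of your segments that agree on these vertices agree on all of $[-m,m]$ by uniqueness of geodesics in the $\mathrm{CAT}(0)$ space $X$, since $[-m,m]$ is covered by the subsegments joining consecutive special vertices. This is exactly the count the paper uses ("the segment of length $2m$ contains $\lfloor 2m/C\rfloor$ special vertices of the same type, including $v_0$"), and with that substitution your nested-diagonal argument and the rest of the proof go through.
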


\begin{proof}
%
Set $t_n = nC$ and $x_n = \rho(nC)$. 
Since $G$ is type-preserving and with cocompact action on $X$, up to replacing $\{x_{n}\}_{n \geq 0}$ by a subsequence, we can find a sequence $\{g_{n}\}_{n \geq 0} \subset G$ such that for every $n \geq 0$, $g_{n}(x_{n})$ is the special vertex $v_0$ in a fixed fundamental domain for the $G$-action on  $X$. 

Since $X$ is locally finite, any ball around $v_0$ contains finitely many special vertices. On the other hand, for any $n \geq m \geq 0$, the geodesic segment $g_n \circ \rho([t_n - m, t_n +m])$, which is of length $2m$, contains $\lfloor \frac{2m} C \rfloor$ special vertices, of the same type, including $v_0$. Therefore, we may extract subsequences to ensure that for every $ m \geq 0$ and every $r \in [-m, m]$, the sequence $n \mapsto  g_n \circ \rho (t_n +r)$  is constant on $\{n \; | \; n \geq m\}$. By construction, the map $\rho' \colon r \mapsto \lim\limits_{n\to \infty}   g_n \circ \rho (t_n +r)$ is well defined. Moreover it is a limit of a sequence of geodesic maps converging uniformly on compact sets, and thus $\rho'$ is a geodesic. That $\ell' = \rho'(\RR)$ is strongly $I$-regular follows  from Lemma~\ref{lem:LocalCrit:SRL}.
\end{proof}

Lemma 2.8 from \cite{CaCi} remains the same.

The next proposition is again a technical version of Theorem~\ref{thm:ExistenceStronglyReg_1}, and it is stated separately to ease future references. 

\begin{proposition}(See also \cite[Prop. 2.9]{CaCi})
\label{prop:TechSRH}
Let $X$ be a locally finite thick Euclidean building, $(W,S)$ the associated irreducible spherical Coxeter system, $I \subsetneq S$, and $G$ a type-preserving subgroup of $\Aut(X)$, with cocompact action on $X$. Let $\rho \colon \RR \to X$ be a geodesic map such that $\ell := \rho(\RR)$ is a strongly $I$-regular geodesic line, and let $C >0$ be such that $\rho(nC)$ is a special vertex of a given type for all $n \in \NN$. 

Then there is an increasing sequence $f(n)$ of positive integers such that, for any $n> m > 0$, there is a strongly $I$-regular hyperbolic element $h_{m, n} \in G$ which has a translation axis containing the geodesic segment $[\rho(f(m)C), \rho(f(n)C)]$. 
\end{proposition}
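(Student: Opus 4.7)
The plan is to apply Lemma~\ref{lem:basic-const} and then read off $h_{m,n}$ as a product of the resulting elements $g_m$ and $g_n$. First I apply Lemma~\ref{lem:basic-const} to obtain a sequence $\{g_n\}_{n \geq 0} \subset G$ and an increasing sequence $f(n)$ of positive integers satisfying the stated constancy property, and for each pair $n > m > 0$ I set
$$h_{m,n} := g_n^{-1} g_m \in G.$$
Unwinding the constancy property, which gives $g_n \circ \rho(f(n)C + r) = g_m \circ \rho(f(m)C + r)$ for every $r \in [-m, m]$, one obtains
$$h_{m,n} \circ \rho(f(m)C + r) = \rho(f(n)C + r) \quad \text{for every } r \in [-m, m].$$
In particular $h_{m,n}$ maps the segment $[\rho(f(m)C - m), \rho(f(m)C + m)] \subset \ell$ isometrically onto $[\rho(f(n)C - m), \rho(f(n)C + m)] \subset \ell$ via translation by $T := (f(n) - f(m))C > 0$ in the direction of $\xi_+$ along $\ell$, and taking $r = 0$ gives $h_{m,n}(\rho(f(m)C)) = \rho(f(n)C)$, so the required segment is already visible along $\ell$.

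The main step is to upgrade this local translation to a global one, i.e.\ to prove that $h_{m,n}$ stabilizes $\ell$ setwise and acts on it as translation by $T$. Since $h_{m,n}$ is a type-preserving isometry of $X$ and $\ell$ is strongly $I$-regular, the image $h_{m,n}(\ell)$ is again a strongly $I$-regular bi-infinite geodesic; and it shares with $\ell$ the non-trivial segment $[\rho(f(n)C - m), \rho(f(n)C + m)]$ exhibited above. Invoking the rigidity of strongly $I$-regular geodesic lines supplied by Lemma~2.8 of~\cite{CaCi}, two such lines sharing a non-trivial segment must coincide, so $h_{m,n}(\ell) = \ell$. The restriction $h_{m,n}|_\ell$ is then an orientation-preserving isometry of $\ell \cong \RR$ that agrees with translation by $T$ on a non-trivial subinterval, so it is translation by $T$ on the whole of $\ell$. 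This is the crux of the argument: without the strong $I$-regularity of $\ell$ one really could have two bi-infinite geodesics in $X$ sharing a proper segment and then branching apart at its endpoints, and Lemma~2.8 of~\cite{CaCi} is precisely what rules this out.

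To finish, the relation $h_{m,n}|_\ell = $ translation by $T$ yields $\ell \subset \Min(h_{m,n})$, and applying the standard identity $|h_{m,n}| = \lim\limits_{k \to \infty} d(y, h_{m,n}^{k}(y))/k$ to any $y \in \ell$ gives $|h_{m,n}| = T > 0$. Hence $h_{m,n}$ is a hyperbolic element of $G$ with $\ell$ as a translation axis and translation length $T$. Since $\ell$ is strongly $I$-regular by hypothesis, Lemma~\ref{lem:CharSRH} shows that $h_{m,n}$ is a strongly $I$-regular hyperbolic element of $G$, and by construction its axis $\ell$ contains the segment $[\rho(f(m)C), \rho(f(n)C)]$, as required.
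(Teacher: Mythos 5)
Your overall skeleton matches the paper's: invoke Lemma~\ref{lem:basic-const}, set $h_{m,n}$ equal to a ratio of the resulting $g$'s (your $g_n^{-1}g_m$ versus the paper's $g_m^{-1}g_n$ is immaterial), extract the partial-translation identity on $\rho(f(\cdot)C+r)$ for $r\in[-m,m]$, and finish with Lemma~\ref{lem:LocalCrit:SRL} and Lemma~\ref{lem:CharSRH}. The problem is the step you yourself identify as the crux. You claim that $h_{m,n}(\ell)=\ell$ because ``two strongly $I$-regular geodesic lines sharing a non-trivial segment must coincide,'' and you attribute this rigidity to \cite[Lemma~2.8]{CaCi}. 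That statement is false, and it is not what that lemma says. Geodesics in buildings branch: already in a locally finite thick tree (dimension $1$, where every bi-infinite geodesic is strongly $\emptyset$-regular, since every end lies in the interior of a chamber of the rank-one building at infinity), two distinct lines can share an arbitrarily long segment and then diverge at a vertex. In higher rank the same happens when a regular geodesic crosses a panel transversally: by thickness it admits several geodesic continuations into different apartments, each of which can again be strongly $I$-regular. A strongly $I$-regular line is determined by its two \emph{endpoints at infinity}, not by a bounded subsegment. Consequently there is no reason that $h_{m,n}$ should preserve $\ell$; the known identity only controls $h_{m,n}$ on the finite segment $\rho([f(m)C-m,\,f(m)C+m])$, and $h_{m,n}(\ell)$ may peel off from $\ell$ at the endpoints of its image. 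Note that the proposition itself is careful to assert only that \emph{some} translation axis of $h_{m,n}$ contains $[\rho(f(m)C),\rho(f(n)C)]$ --- not that $\ell$ is that axis --- which is a hint that your stronger intermediate claim is not available.

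What \cite[Lemma~2.8]{CaCi} actually supplies (and what the paper uses, quoting it verbatim as the whole justification) is a $\mathrm{CAT}(0)$ criterion for hyperbolicity: an isometry that carries a non-degenerate subsegment of a geodesic line onto another subsegment of the \emph{same} line, by translation along that line, is hyperbolic, with a translation axis passing through the relevant points; this is proved by convexity of the displacement function and the structure of $\Min$, without ever asserting that the isometry preserves the ambient line. So the fix is not to argue that $h_{m,n}|_\ell$ is a global translation of $\ell$, but to cite (or reprove) that $\mathrm{CAT}(0)$ lemma to get hyperbolicity and an axis through $\rho(f(m)C)$ and $\rho(f(n)C)$, and only then apply Lemma~\ref{lem:LocalCrit:SRL} to conclude that this axis (whatever it is) is strongly $I$-regular because it contains two special vertices sitting in the interiors of opposite $I$-sectors. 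Your closing steps (the limit formula for the translation length, the appeal to Lemma~\ref{lem:CharSRH}) are fine once an axis is in hand, but as written the argument has a genuine gap at the decisive point.
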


\begin{proof}
Let $\{g_n\}_{n\geq 0} \subset G$ and $f(n) \in \NN$ be the sequences afforded by  Lemma~\ref{lem:basic-const}. Fix $m > 0$. Then by Lemma~\ref{lem:basic-const}, we know that for every $r \in [-m, m]$, the map $n \mapsto g_n \circ \rho({f(n)C} + r)$ is constant on $\{n \; \vert \; n \geq m\}$. In particular, for every $n>m$, the element $h_{m, n} = g_m^{-1} g_n \in G$ has the property that 
$$h_{m, n} \circ \rho(f(n)C +r) = \rho(f(m)C+r)$$
for any $r \in [-m, m]$. Therefore \cite[Lemma 2.8]{CaCi} ensures that $h_{m, n}$ is hyperbolic and has a translation axis passing through $\rho(f(m)C)$ and $ \rho(f(n)C)$. This axis must therefore be strongly $I$-regular by Lemma~\ref{lem:LocalCrit:SRL}, and we conclude that $h_{m, n}$ is strongly  $I$-regular hyperbolic in view of Lemma~\ref{lem:CharSRH}. 
\end{proof}


\begin{proof}[Proof of Theorem \ref{thm:ExistenceStronglyReg_1}]
Since $X$ is thick, the subgroup of $\Aut(X)$ consisting of type-preserving automorphisms is of finite index. In particular, $G$ has a finite index subgroup acting by type-preserving automorphisms, and whose action remains cocompact. There is thus no loss of generality in assuming that the $G$-action is type-preserving.

Let $\mathcal{A}$ be an apartment in $X$. By Lemma~\ref{existance_reg_element}, the affine Weyl group of $X$ acting on $\mathcal{A}$ contains strongly $I$-regular hyperbolic elements. Any point, and so also any special vertex, of the apartment $\mathcal{A}$ belongs to a translation axis of such a fixed hyperbolic element. It follows there exist a geodesic map $\rho \colon \RR \to \mathcal{A}$ and some $C>0$ such that $\ell = \rho(\RR)$ is strongly $I$-regular and that $\rho(nC)$ is a special vertex for all $n \in \ZZ$. The desired conclusion now follows from Proposition~\ref{prop:TechSRH}.
\end{proof}

\subsection{Dynamics of strongly $I$-regular elements}
\label{subsect:dynamics-reg}

Since the visual boundary of $\Min(g)$ of a strongly $I$-regular element $g$ of $\Aut(\Delta)$  is generally not reduced to the visual boundary of a flat, or an affine apartment, the dynamics of such element $g$ on the ideal points outside $\partial_\infty \Min(g)$ is not very clear, and therefore, the same result as \cite[Prop. 2.10]{CaCi} cannot be expected. Still, if we consider points in $\partial_\infty \Delta$ with specific properties, then \cite[Prop. 2.10]{CaCi} will become true.

We recall the following proposition which is a key ingredient in this section.

\begin{proposition}(See \cite[ Prop. 4.106]{AB})
Let $c$ and $d$ be chambers in a spherical building. If $c$ and $d$ are not opposite, then there is a panel (i.e. facet) $p< c$ such that $c = \proj_p(d)$.
\end{proposition}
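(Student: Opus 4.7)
The plan is to invoke the $W$-metric calculus on the spherical building, together with the standard characterization of the longest element of the associated finite Coxeter system $(W,S)$. Write $w := \delta(c, d) \in W$ for the Weyl distance from $c$ to $d$, and recall that $c$ and $d$ are opposite precisely when $w = w_0$, the longest element of $W$. So the hypothesis translates into $w \neq w_0$.

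The next step is to produce a well-chosen simple reflection $s \in S$. The longest element is uniquely characterized among elements of $W$ by the property that $\ell(sw_0) < \ell(w_0)$ for every $s \in S$; since $w \neq w_0$, this property must fail at $w$, so one can pick $s \in S$ with $\ell(sw) = \ell(w) + 1$. Let $p$ denote the panel of $c$ of type $s$ (which exists and is unique because $c$ has one panel of each type).

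Finally, one must verify that $c = \proj_p(d)$. For any chamber $c' \in p$ distinct from $c$, axiom (WD2) of $W$-metric spaces (see \cite[Ch.~5]{AB}) constrains $\delta(c', d) \in \{w, sw\}$, and forces $\delta(c', d) = sw$ whenever $\ell(sw) > \ell(w)$. By the previous step the latter holds, hence $d_{\mathrm{gal}}(c', d) = \ell(sw) = \ell(w) + 1 > \ell(w) = d_{\mathrm{gal}}(c, d)$. Therefore $c$ is the unique chamber of $p$ at minimum gallery distance from $d$, that is, $\proj_p(d) = c$.

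There is no genuine obstacle here: the argument is a direct application of the $W$-distance axioms combined with the minimality characterization of $w_0$, and the conclusion holds in any (thick or thin) spherical building without further hypothesis. The only subtlety worth flagging is that the equality $\ell(sw) = \ell(w) + 1$ is needed in the strict form so that axiom (WD2) rules out the possibility $\delta(c',d) = w$; the weaker statement $\ell(sw) \neq \ell(w)$ or $\ell(sw) \geq \ell(w) + 1$ already suffices, and is exactly what the characterization of $w_0$ delivers.
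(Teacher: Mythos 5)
Your proof is correct. Note that the paper offers no proof of this statement at all — it is quoted verbatim from \cite{AB} (Prop.\ 4.106) — so there is nothing to compare line by line; your argument via the $W$-metric is a complete and standard derivation of the cited fact. The two ingredients you use are exactly the right ones: the characterization of the longest element $w_0$ of a finite Coxeter system as the unique $w$ with $\ell(sw)<\ell(w)$ for all $s\in S$, and the second building axiom forcing $\delta(c',d)=s\delta(c,d)$ for every chamber $c'\neq c$ in the panel residue once $\ell(s\delta(c,d))=\ell(\delta(c,d))+1$, which makes $c$ the gate of that residue with respect to $d$. (The proof in \cite{AB} itself is phrased simplicially, by extending a minimal gallery from $d$ to $c$ by one more chamber inside an apartment, but this is the same mechanism in different clothing.) The only stylistic nit is the notation $c'\in p$, which should be read as $c'$ ranging over the chambers of the residue of the panel $p$.
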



%

Recall that a residue is a set of chambers of a spherical/affine building with specific properties (see \cite[Section 5.3.1.]{AB}). In this section the residues are seen from the perspective of the finite Weyl group $(W,S)$, and not with respect to the walls of the affine Weyl group. Thus, two opposite simplices in $\partial_\infty \Delta$ might have different types. 

\begin{remark}
\label{rem::residues_proj}
Let $X$ be an irreducible spherical building of associated irreducible spherical Coxeter system $(W,S)$. Let $I,J \subsetneq S$, and $\mathcal{R}$, respectively $\mathcal{S}$, residue of type $I$, respectively $J$, in $X$. Then, by \cite[Lemma 5.29]{AB}, the set $\delta(\mathcal{R}, \mathcal{S})$ is a double coset $W_I w W_J$, for some $w \in W$. Also, by \cite[Prop. 5.36(2)]{AB},  there is a unique element $w_1$ in $W$ such that $w_1 = \min(\delta(\mathcal{R}, \mathcal{S}))$. Moreover, $w_1$ has minimal length in the double coset $W_I w W_J$ (see \cite[Section 5.3.2.]{AB}). Then $\proj_{\mathcal{R}}(\mathcal{S})$ is a residue of type $I \cap w_1Jw_1^{-1}$, where $I \cap w_1Jw_1^{-1}$ should be seen as the intersection of the associated reflections from $W$ in the corresponding Euclidean space associated with the finite Coxeter system $(W,S)$.  Notice the following cases:
\begin{enumerate}
\item 
If $I \cap w_1Jw_1^{-1}=I$, then $\proj_{\mathcal{R}}(\mathcal{S}) = \mathcal{R}$.
\item
If $I \cap w_1Jw_1^{-1} \neq \emptyset$, then $\proj_{\mathcal{R}}(\mathcal{S})$ will be a residue that consists of more than one chamber.
\item
If $I \cap w_1Jw_1^{-1} = \emptyset$, then $\proj_{\mathcal{R}}(\mathcal{S})$ will just be a chamber.
\end{enumerate}
\end{remark}

\begin{lemma}
\label{lem::separating_walls}
Let $X$ be an irreducible spherical building of associated irreducible spherical Coxeter system $(W,S)$. Let $I,J \subsetneq S$, and $\mathcal{R}$, respectively $\mathcal{S}$, residue of type $I$, respectively $J$, in $X$ such that $\mathcal{R} \cap \mathcal{S} = \emptyset$, seen as the intersection of two sets of chambers. Then for any chamber $c \in \mathcal{S}$, the set of all walls of the chamber $\proj_{\mathcal{R}}(c) \in \mathcal{R}$ sperating it from $c$ does not contain any wall in $\mathcal{R}$ corresponding to the elements of $W_I$.
\end{lemma}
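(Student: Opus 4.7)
The plan is to argue by contradiction using the defining minimality of the projection $p := \proj_{\mathcal{R}}(c)$, which by the standard description (as recalled in Remark~\ref{rem::residues_proj}) is the unique chamber of $\mathcal{R}$ at minimal gallery distance from $c$. As a preliminary step, I would make explicit the identification suggested by the phrasing of the lemma: the ``walls in $\mathcal{R}$ corresponding to elements of $W_I$'' are precisely those walls passing through a panel of type $i \in I$ of some chamber of $\mathcal{R}$, equivalently, those walls whose reflection preserves $\mathcal{R}$. This follows from the description of a type-$I$ residue as the set of chambers reachable from a fixed one by galleries whose types are in $I$, so that crossing an $i$-panel with $i \in I$ keeps one inside $\mathcal{R}$.

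Then, assuming for contradiction that some wall $\alpha$ of $p$ both separates $p$ from $c$ and is of this form, let $\pi$ be the associated panel of $p$, of type $i \in I$, and let $p'$ be the unique chamber $i$-adjacent to $p$ across $\pi$. By the identification above, $p' \in \mathcal{R}$. By the standard fact about gallery distances in spherical buildings, a wall of $p$ separates $p$ from $c$ exactly when it is the first wall crossed by some minimal gallery from $p$ to $c$; crossing such a wall shortens the distance by one, so $d(p', c) = d(p, c) - 1$. This contradicts the minimality of $d(p, c)$ over chambers of $\mathcal{R}$, and the lemma follows. The hypothesis $\mathcal{R} \cap \mathcal{S} = \emptyset$ is only used to rule out the degenerate case $p = c$, for which the statement is vacuous.

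I do not expect a substantive obstacle. The entire argument is a direct application of the definition of $\proj_{\mathcal{R}}$ combined with the well-known length-reduction property for walls crossed by minimal galleries. The only care needed is in matching up the two descriptions of the relevant walls, namely ``walls in $\mathcal{R}$ corresponding to elements of $W_I$'' versus ``walls of chambers of $\mathcal{R}$ at panels of type $i \in I$'', which amounts to unpacking the definition of a type-$I$ residue.
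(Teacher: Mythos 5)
Your proof is correct, but it takes a genuinely more direct route than the paper's. You argue locally at $p=\proj_{\mathcal{R}}(c)$: a separating wall of $p$ through a panel $\pi$ of type $i\in I$ would produce an $i$-adjacent chamber of $\mathcal{R}$ strictly closer to $c$, contradicting the gate property of the projection. One small touch-up: in a thick building the chamber $i$-adjacent to $p$ across $\pi$ is not unique, and only one of them realizes $d(p',c)=d(p,c)-1$; you should take $p'=\proj_{\pi}(c)$ (equivalently, the chamber adjacent to $p$ across $\pi$ inside an apartment containing both $p$ and $c$), which still lies in $\mathcal{R}$ because $i\in I$. The paper instead introduces $c':=\proj_{\mathcal{S}}(f)$, uses the gate identities $d(c,f)=d(f,c')+d(c',c)$ and $\proj_{\mathcal{R}}(c')=f$ to identify $\delta(f,c')$ with the minimal-length representative $w_1$ of the double coset $W_I w W_J=\delta(\mathcal{R},\mathcal{S})$, and then deduces the claim from the fact that $w_1$ is reduced with respect to $(W_I,W_J)$. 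Your argument in effect re-proves just the one-sided half of this, namely that $\delta(\proj_{\mathcal{R}}(c),c)$ has minimal length in its left coset $W_I\,\delta(\proj_{\mathcal{R}}(c),c)$, which is all the statement requires; it is more elementary, bypasses the residue $\mathcal{S}$ and the double-coset machinery entirely, and actually establishes the slightly stronger statement for an arbitrary chamber $c\notin\mathcal{R}$. You are also right that the hypothesis $\mathcal{R}\cap\mathcal{S}=\emptyset$ only serves to exclude the vacuous case where $\proj_{\mathcal{R}}(c)=c$ and there are no separating walls at all.
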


\begin{proof}
Take some $c  \in \mathcal{S}$, and denote $f:=\proj_{\mathcal{R}}(c) \in \mathcal{R}$. Consider $c':=\proj_{\mathcal{S}}(f) \in \mathcal{S}$, and by \cite[Prop. 5.34]{AB} we have $d(c,f)= d(f,c')+d(c',c)$. Moreover, by a simple computation using the same \cite[Prop. 5.34]{AB}, one has that $\proj_{\mathcal{R}}(c')= f \in \mathcal{R}$, as otherwise $f' := \proj_{\mathcal{R}}(c') \neq f$ would have the property that $d(c,f') < d(c,f)$, giving a contradiction. Then, by \cite[Prop. 5.36]{AB} we know that 
\begin{equation}
\label{equ::proj_residue_dis}
\proj_{\mathcal{R}}(\mathcal{S}) = \{f \in \mathcal{R} \; \vert \; \text{there is } c\in \mathcal{S} \text{ with } d(f,c) = d(\mathcal{R}, \mathcal{S})\},
\end{equation}
where by definition \cite[Def. 5.35]{AB} $\proj_{\mathcal{R}}(\mathcal{S}):=\{\proj_{\mathcal{R}}(c) \; \vert \; c \in\mathcal{S}\}$.
Since $\proj_{\mathcal{R}}(c')= f$ and $c':=\proj_{\mathcal{S}}(f)$, equality (\ref{equ::proj_residue_dis}) guarantees that $d(f,c')=d(\mathcal{R}, \mathcal{S})$. Moreover, there is a unique element $w_1 \in W$ such that $w_1 = \min(\delta(\mathcal{R}, \mathcal{S}))$, and moreover $w_1$ has minimal length in the double coset $W_I w W_J = \delta(\mathcal{R}, \mathcal{S})$.  In particular, since  $\mathcal{R} \cap \mathcal{S} = \emptyset$, we have $w_1$ is not the trivial element  of $W$. This means that the set of all walls separating $f$ from $c'$, and thus from $c$, cannot contain any wall in $\mathcal{R}$ corresponding to the elements of $W_I$, as otherwise $w_1=\delta(f,c')$ would not be minimal in $W_I w_1 W_J$. 
 \end{proof}

In the next proposition, we use $\rho_{\mathcal{A},c_-}$ to denote the retraction from the ideal chamber $c_- \in \Ch(\partial_\infty \mathcal{A})$ to the apartment $\mathcal{A}$. We consider residues as sets of chambers.

\begin{proposition}
\label{prop::dynamics_I_regular}
Let $\Delta$ be a Euclidean building,  $(W,S)$ the associated irreducible spherical Coxeter system, $I \subsetneq S$, and $\gamma \in \Aut(\Delta)$ a type-preserving, strongly $I$-regular hyperbolic element.  Denote by $\sigma_+,\sigma_-$ the minimal ideal simplices containing the attracting and repelling  endpoints of $\gamma$ in their interior, respectively. Let $\mathcal{R}_{-}$ be the $I$-residue corresponding to $\sigma_-$.   Let $\xi \in \partial_\infty \Delta \setminus \partial_\infty \Min(\gamma)$, $\beta$ the minimal ideal simplex containing $\xi$ in its interior whose type is denoted by $J$, and $\mathcal{S}$ the $J$-residue corresponding to $\beta$. 

Then, under the assumption that $\proj_{\mathcal{R}_{-}}(\mathcal{S}) \cap \partial_\infty \Min(\gamma)$ contains at least one ideal chamber,  the limit $\lim\limits_{n\to \infty} \gamma^{n}(\xi)$ exists (in the cone topology), and coincides with $\rho_{\mathcal{A}',c_-}(\xi)$, for  any $c_- \in \proj_{\mathcal{R}_{-}}(\mathcal{S}) \cap \partial_\infty \Min(\gamma)$ and any apartment $\mathcal{A}'$ with $c_-, \sigma_+ \subset \partial_\infty \mathcal{A}'$.

\end{proposition}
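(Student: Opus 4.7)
The overall plan is to reduce the convergence of $\gamma^{n}(\xi)$ to a geometric comparison of two rays inside $\mathcal{A}'$, using that $\gamma^{n}(\mathcal{B})$ shares an ever-growing sub-sector with $\mathcal{A}'$ pointing at $c_-$. First, fix a chamber $c\in \mathcal{S}$ realising $c_- = \proj_{\mathcal{R}_-}(c)$ (so $\xi \in \overline{c}$), and choose an apartment $\mathcal{B}$ of $\Delta$ with $c,c_-\subset \partial_\infty \mathcal{B}$. The apartment $\mathcal{A}'$ is automatically $\gamma$-stable: $\gamma$ fixes $c_-$ pointwise (because $c_- \subset \partial_\infty \Min(\gamma)$) and fixes $\sigma_+$ pointwise, and since $\gamma$ is type-preserving and hyperbolic, $\gamma|_{\mathcal{A}'}$ is a Euclidean translation in the $\sigma_+$-direction, hence acts trivially on $\partial_\infty \mathcal{A}'$. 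Because $\mathcal{A}'$ and $\mathcal{B}$ share $c_-$ at infinity, their intersection contains a closed sector $\overline{Q(b_-,c_-)}$, and the canonical apartment-isomorphism $\psi\colon \mathcal{B}\to \mathcal{A}'$ fixing this sector pointwise satisfies $\eta := \rho_{\mathcal{A}',c_-}(\xi) = \psi(\xi)$.

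Next, translate everything by $\gamma^{n}$. Set $\overline{S_n} := \gamma^{n}(\overline{Q(b_-,c_-)}) \subseteq \mathcal{A}' \cap \gamma^{n}(\mathcal{B})$ and $\phi_n := \gamma^{n} \circ \psi^{-1} \circ \gamma^{-n}\colon \mathcal{A}' \to \gamma^{n}(\mathcal{B})$; then $\phi_n$ is the canonical isomorphism fixing $\overline{S_n}$ pointwise, and $\phi_n(\eta) = \gamma^{n}(\xi)$ since $\gamma$ acts trivially on $\partial_\infty \mathcal{A}'$. Taking the base point $x_0 := b_-$ on the translation axis, the rays $[x_0,\eta)\subset \mathcal{A}'$ and $[x_0,\gamma^{n}(\xi))\subset \gamma^{n}(\mathcal{B})$ both emanate from $x_0$ and are $\phi_n$-images of one another, so they coincide on the initial segment that $[x_0,\eta)$ spends inside $\overline{S_n}$. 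Convergence in the cone topology based at $x_0$ will follow once we verify that the length $T_n$ of this initial segment tends to infinity.

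The main obstacle, and the point where the projection hypothesis genuinely enters, is the wall-analysis establishing $T_n\to\infty$. Writing $\overline{S_n} = \gamma^{n}(b_-) + \overline{C}$ where $\overline{C}$ is the Weyl cone of $c_-$, and letting $u$ (resp.\ $v_\eta$) denote the unit direction of the axis toward $\sigma_+$ (resp.\ of $[x_0,\eta)$), the condition $x_0+tv_\eta \in \overline{S_n}$ reads $t\langle v_\eta,\alpha\rangle \geq n|\gamma|\langle u,\alpha\rangle$ for every wall $\alpha$ of $c_-$. For walls $\alpha$ of type $S\setminus I$ (those not through $\sigma_-$) one has $\langle u,\alpha\rangle < 0$ strictly, so the constraint either is automatic or yields a threshold $T_{n,\alpha} = n|\gamma|\langle u,\alpha\rangle/\langle v_\eta,\alpha\rangle$ growing linearly in $n$. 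The only potentially obstructive walls are those of type $I$ (through $\sigma_-$), where $\langle u,\alpha\rangle = 0$ and the constraint collapses to $\langle v_\eta,\alpha\rangle \geq 0$. This is exactly where Lemma \ref{lem::separating_walls} is needed: via $\psi$, $c_-$ remains the projection of $\psi(c)$ onto $\mathcal{R}_-$ in $\partial_\infty \mathcal{A}'$, and a wall $\alpha$ of type $I$ with $\langle v_\eta,\alpha\rangle<0$ would separate $\overline{\psi(c)}\ni \eta$ from $c_-$, whence $r_\alpha(c_-)\in \mathcal{R}_-$ would be strictly closer to $\psi(c)$ than $c_-$, contradicting the projection identity. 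Combining all wall-constraints yields $T_n\to\infty$, and therefore $\gamma^{n}(\xi)\to \eta$ in the cone topology, as required.
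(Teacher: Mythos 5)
Your overall architecture (retraction $\rho_{\mathcal{A}',c_-}$ realised by apartment isomorphisms fixing a common $c_-$-sector, growing overlap $\overline{S_n}$, and a wall-by-wall analysis in which Lemma~\ref{lem::separating_walls} rules out obstruction from the type-$I$ walls) is the same as the paper's, and your quantitative treatment of the thresholds $T_{n,\alpha}$ is in fact a cleaner, unified version of the paper's Case~1/Case~2 split. However, there is one genuinely false step: the claim that $\mathcal{A}'$ is ``automatically $\gamma$-stable'' because $\gamma$ fixes $c_-$ and $\sigma_+$ pointwise. Since $\sigma_+$ is not a chamber, the pair $(c_-,\sigma_+)$ does not determine $\mathcal{A}'$ (the proposition itself quantifies over \emph{any} such apartment), and $\gamma$ may permute the various apartments containing $c_-$ and $\sigma_+$ at infinity; the paper explicitly warns that ``it might be the case that $\gamma$ does not leave invariant $\mathcal{A}$, maybe just a convex part of it,'' and for that reason works only inside the $\gamma$-invariant hull $\Gamma(c_-,c,x)\subset\Min(\gamma)$. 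Your argument leans on stability twice: the map $\phi_n=\gamma^{n}\circ\psi^{-1}\circ\gamma^{-n}$ has domain $\gamma^{n}(\mathcal{A}')$, not $\mathcal{A}'$, so it is ill-defined as written, and the identity $\phi_n(\eta)=\gamma^{n}(\xi)$ is justified by ``$\gamma$ acts trivially on $\partial_\infty\mathcal{A}'$,'' which again presupposes stability.

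The damage is repairable, because the two facts you actually use survive without stability. First, $\overline{S_n}\subseteq\mathcal{A}'$ holds provided the common sector is based at a point $b_-\in\Min(\gamma)\cap\mathcal{A}'\cap\mathcal{B}$ (two $c_-$-sectors in $\mathcal{A}'$ always share a sub-sector, so such a $b_-$ exists, and the axis through it lies in $\mathcal{A}'$); then $\gamma^{n}(b_-)\in\mathcal{A}'$ and $\overline{S_n}=b_-+n|\gamma|u+\overline{C}$ is a sector of $\mathcal{A}'$ toward $c_-\subset\partial_\infty\mathcal{A}'$. Second, instead of $\phi_n$ use the canonical isomorphism $\chi_n\colon\gamma^{n}(\mathcal{B})\to\mathcal{A}'$ fixing $\mathcal{A}'\cap\gamma^{n}(\mathcal{B})\supseteq\overline{S_n}$ pointwise; since $\chi_n\circ\gamma^{n}|_{\mathcal{B}}$ and $\tau_{n|\gamma|u}\circ\psi$ are isomorphisms $\mathcal{B}\to\mathcal{A}'$ agreeing on the sector $\overline{Q(b_-,c_-)}$, they coincide, whence $\chi_n(\gamma^{n}\xi)=\eta$ and the two rays $[b_-,\eta)$ and $[b_-,\gamma^{n}\xi)$ agree on $[b_-,\eta)\cap\overline{S_n}$. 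With that substitution your wall analysis goes through and the proof closes; as written, though, the stability claim is a gap you must fill.
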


\begin{proof}
Let $\xi \in \partial_\infty \Delta \setminus \partial_\infty \Min(\gamma)$. We are under the assumption that $\proj_{\mathcal{R}_{-}}(\mathcal{S}) \cap \partial_\infty \Min(\gamma)$ contains at least one ideal chamber. Let $\mathcal{R}_{+}$ be the $I$-residue corresponding to $\sigma_+$, thus $\mathcal{R}_{+}$ and $\mathcal{R}_{-}$ are opposite.

We have two cases to treat, when $\proj_{\mathcal{R}_{-}}(\mathcal{S})$ is just a chamber, and when $\proj_{\mathcal{R}_{-}}(\mathcal{S})$ is a residue with more than one chamber. The latter case is possible only when $I \cap w_1J w_1^{-1}\neq \emptyset$.

We also make the following remark. Let $f \in \mathcal{R}_{-} \cap \partial_\infty \Min(\gamma)$, and take $c:=\proj_{\mathcal{R}_{+}}(f)$, which is uniquely determined by $f$. Since $\gamma$ fixes pointwise $f$, by uniqueness of $c$, the same is true for $c$, i.e. $\gamma(c)=c$ pointwise. Consider now the combinatorial convex hull $\Gamma(f,c)$ in the spherical building $\partial_\infty \Delta$.  $\Gamma(f,c)$ is also unique, contains every minimal galery between $c$ and $f$, and is contained in any apartment of  $\partial_\infty \Delta$ having both $c,f$ as chambers (see \cite[Section 4.11.1]{AB}). Then again by uniqueness, we have that $\gamma(\Gamma(f,c))=\Gamma(f,c)$ pointwise. 

\textbf{Case 1: $\proj_{\mathcal{R}_{-}}(\mathcal{S})$ is just a chamber.} Take $c_-:= \proj_{\mathcal{R}_{-}}(\mathcal{S}) \subset \partial_\infty \Min(\gamma)$, and consider $c:=\proj_{\mathcal{R}_{+}}(c_-)$. Notice that $\sigma_+ \subset c$, but $c$ is not opposite $c_-$. Fix for what follows an apartment $\mathcal{A}$ of $\Delta$ such that $c_-,c \in \Ch(\partial_\infty \mathcal{A})$, and such that $\Min(\gamma) \cap \mathcal{A} \neq \emptyset$ (which is always true). Then $\Gamma(c_-,c) \subset \partial_\infty \mathcal{A}$, but it might be the case that $\gamma$ does not leave invariant $\mathcal{A}$, maybe just a convex part of it. 

Consider an apartment $\mathcal{B}$ of $\Delta$ whose boundary contains $\xi$ and $c_-$. Since $\xi \notin  \partial_\infty \Min(\gamma)$ we have $\xi \not \in c_-$. Moreover, notice that $\mathcal{B}$ contains an ideal chamber $b$ in its boundary such that $\xi \in b$ and $d(c_-,b)=d(\mathcal{R},\mathcal{S})$. 

Let $Q$ be a closed sector in $\mathcal{A} \cap \mathcal{B}$ with base point $x \in \Min(\gamma)$ and pointing towards $c_-$. Consider the combinatorial  convex hull $\Gamma(c_-,c,x)$ in $\Delta$, that in entirely contained in $\mathcal{A}$. In particular, $\Gamma(c_-,c) \subseteq \partial_\infty \Gamma(c_-,c,x)$.

Since $\Min(\gamma)$ is convex and $c_- \subset \partial_\infty \Min(\gamma)$, we have that $Q \subset \Gamma(c_-,c,x) \subset \Min(\gamma)$. Let moreover $p \in Q$ be a base point contained in the interior of $Q$. Since $\mathcal{B}$ is convex, the geodesic ray $[p, \xi)$ is entirely contained in $\mathcal{B}$. In particular, the ray $[p, \xi)$ is not entirely contained in $Q$ and hence there is some $q \in Q$ such that $[p, \xi) \cap Q = [p, q]$. Moreover, by Lemma \ref{lem::separating_walls}, the set of all walls of $c_-$ separating it from $b$ does not contain any of the walls correspondig to the elements in $W_I$. So the point $q$ lies in at least one of the walls of $Q$ that are different from the ones corresponding to elements of $W_I$. But $q$ is not contained in any of the walls corresponding to elements of $W_I$. 

Now, because $p$ lies in the interior of $Q$, it follows that the segment $[p, q]$ is of positive length. Therefore, in the apartment $\mathcal{A}$, the geodesic segment $[p, q]$ can be extended uniquely to a geodesic ray emanating from $p$; in other words, there is a unique boundary point $\eta \in \partial_\infty \mathcal{A}$ such that the segment $[p, q]$ is contained in the ray $[p, \eta)$. 

\medskip
We claim that $\lim\limits_{n \to \infty} \gamma^n(\xi) = \eta$, and $\eta = \rho_{\mathcal{A}, c_-}(\xi)$, and that $\eta \in \Gamma(c_-,c) \subset \partial_\infty \Min(\gamma)$.

Indeed, that $\eta \in \Gamma(c_-,c) \subset \partial_\infty \Min(\gamma)$ follows easily from the fact that 
$$[p,q] \subset Q \subset \Gamma(c_-,c,x) \subset \Min(\gamma),$$ 
convexity of $\Gamma(c_-,c,x) \subset \mathcal{A}$, and the fact that $q$ lies in at least one of the walls of $Q$ that are different from the ones corresponding to elements of $W_I$. 

For the rest of the claim we proceed as follows. Since $\gamma$ is strongly $I$-regular with repelling simplex $\sigma_-$, and $ \Gamma(c_-,c,x) \subset \mathcal{A} \cap \Min(\gamma)$, we get that $\gamma(\Gamma(c_-,c,x))=\Gamma(c_-,c,x)$ setwise. Then $ Q \subset \gamma^n(Q) \subset \Gamma(c_-,c,x)$ for all $n >0$. Moreover $[\gamma^n(p), \gamma^n(q)] = [\gamma^n(p), \gamma^n(\xi)) \cap \gamma^n(Q)$. Because $\gamma^n$ acts on $\Gamma(c_-,c,x)$ as a Euclidean translation, it follows that $[\gamma^n(p), \gamma^n(q)] = \gamma^n([p, q]) $ is parallel to $[p, q]$ (in the Euclidean sense), or even more, it can be the case that they both lie on a  translation axis of $\gamma$ in $\Gamma(c_-,c,x)$. On the other hand, the apartment $\gamma^n(\mathcal{B})$ contains both the ray $[p, \gamma^n(\xi))$ and the ray $[\gamma^n(p), \gamma^n(\xi))$, which are parallel since they have the same endpoint $\gamma^n(\xi)$. It follows that $[p, \gamma^n(\xi))$ contains $[p, q]$. 

Our next important observation is the following. Recall that $\gamma$ is strongly $I$-regular and $q$ does not lie in any of the walls of $Q$ parallel with the ones corresponding to the elements of  $W_I$. Then, for any boundary wall $M$ of $Q$ that is not parallel with same wall from $W_I$, the distance between $M$ and $\gamma^n(M)$ grows linearly with $n$. This implies that the length of the geodesic segment $[p, \gamma^n(\xi)) \cap \gamma^n(Q)$ tends to infinity with $n$. In particular, so does the length of $[p, \gamma^n(\xi)) \cap \mathcal{A}$.  Notice that if $q$ had lied in one of the walls corresponding to elements in $W_I$, then the segment $[p, \gamma^n(\xi)) \cap \gamma^n(Q)$ would have had constant distance. 

We have thus shown that the geodesic ray $[p, \gamma^n(\xi))$ contains $[p, q]$ for all $n \geq 0$ and also a subsegment of $\mathcal{A}$ whose length tends to infinity with $n$. By the definition of $\eta$, this implies that  $[p, \gamma^n(\xi)) \cap [p, \eta) $ is a segment whose length also tends to infinity with $n$. In particular we have $\lim\limits_{n \to \infty} \gamma^n(\xi) = \eta$.  Also, by the definition of the retraction $\rho_{\mathcal{A}, c_-}$ and by the construction above we have $\rho_{A, c_-}(\xi)=\eta$. The claim stands proven. 

To finish the proof of the proposition in the considered case, we notice that the choice of the apartment $\mathcal{A}$ does not matter, because for any other apartment $\mathcal{A}'$ of $\Delta$ such that $c_-,c \in \Ch(\partial_\infty \mathcal{A}')$, we have that $\mathcal{A}' \cap \Min(\gamma) \neq \emptyset$ and $\Gamma(c_-,c) \subset \partial_\infty \mathcal{A}'$. 

\medskip
\textbf{Case 2: $\proj_{\mathcal{R}_{-}}(\mathcal{S})$ is a residue with more than one chamber.}  By Remark \ref{rem::residues_proj}, this will be the case only when $I \cap w_1Jw_1^{-1} \neq \emptyset$.

Take $c_- \in \proj_{\mathcal{R}_{-}}(\mathcal{S}) \cap \partial_\infty \Min(\gamma)$, and consider $c:=\proj_{\mathcal{R}_{+}}(c_-)$. Proceed as in Case 1, for $c_-, c$ and any apartment $\mathcal{A}$ of $\Delta$ such that $c_-,c \in \Ch(\partial_\infty \mathcal{A})$. As $I \cap w_1Jw_1^{-1} \neq \emptyset$ represent reflections with respect to the walls of the affine building $\Delta$, we have that $\mathcal{S}$ contains a sub-residue laying in the same set of walls in $\Delta$ as a sub-residue of $\mathcal{R}_{-}$. Then we get that the segment $[p,q]$ is parallel with at least one of the walls corresponding to the elements of $W_I$. Still, $q$ does not lie in any of the walls of $Q$ corresponding to the elements of $W_I$.  Then at the end of the day, one can notice that $\eta$ is a precise point in the closure of the combinatorial  convex hull $\Gamma(c_-,c)$ in $\partial_\infty \Delta$, and $\eta$ does not depend on the choice of $c_-$ in $\proj_{\mathcal{R}_{-}}(\mathcal{S})$ or of the apartment $\mathcal{A}$. 

If one takes $c_- \in \proj_{\mathcal{R}_{-}}(\mathcal{S})$ such that $c_- \notin \partial_\infty \Min(\gamma)$, then for $c:=\proj_{\mathcal{R}_{+}}(c_-)$ we have that $\{\gamma^n(c)\; \vert \; n \in \NN\}$ is still a set of chambers in $\mathcal{R}_{+}$, but the their dynamics is chaotic with no clear convergent pattern.
\end{proof}

\begin{remark}
For a $\xi \in \partial_\infty \Delta \setminus \partial_\infty \Min(\gamma)$ that does not satisfy the conditions of Proposition \ref{prop::dynamics_I_regular}, i.e. $\proj_{\mathcal{R}_{-}}(\mathcal{S}) \cap \Ch(\partial_\infty \Min(\gamma)) = \emptyset$, it might be the case that the set $\{\gamma^{n}(\xi) \; \vert \; n\in \NN\}$ will rotate around either the repelling endpoint of $\gamma$, or some translation axis of $\gamma$, without a clear convergent pattern.
\end{remark}

\begin{remark} Since two disjoint, opposite open sectors (not necessarily with a common base point) that are translated along an axis that is parallel to some of the boundary walls of both of the sectors might still have an empty intersection after applying that translation, the proof of \cite[Prop. 2.11]{CaCi} cannot work for strongly $I$-regular hyperbolic elements.
\end{remark}

\begin{remark}
For split semi-simple algebraic groups $G$ over non-Archimedean local fields, if one considers strongly $I$-regular hyperbolic elements $\gamma$ from a maximal split torus of $G$, the dynamical behaviour of such $\gamma$, in the sense of Propositon \ref{prop::dynamics_I_regular}, is much nicer as both of the residues $\mathcal{R}_{-}, \mathcal{R}_{+}$ will be contained in $\partial_\infty \Min(\gamma)$.
\end{remark}

\section{Applications to strongly $I$-regular elements}
\label{sec::applications}
\subsection{Some intermediate results}

Let $\Delta$ be a locally finite Euclidean building,  $(W,S)$ the associated irreducible spherical Coxeter system, $I \subsetneq S$, and $\gamma \in \Aut(\Delta)$ a type-preserving, strongly $I$-regular hyperbolic element. Consider the attracting and repelling endpoints $\xi_+,\xi_-$ of $\gamma$, and the minimal ideal simplices $\sigma_+,\sigma_- \subset \partial_\infty \Delta$ that contain those in their interior, respectively. Notice $\partial_\infty \Min(\gamma)$ is contained in the combinatorial  convex hull  $Conv_{\partial_\infty  \Delta} (\st_{\partial_\infty  \Delta}(\sigma_+),\st_{\partial_\infty  \Delta}(\sigma_-) )$. In general, that inclusion might be strict.

Fix a geodesic line $\ell$ in $\mathcal{A}$ such that $\ell$ is a translation axis of $\gamma$. Thus, the endpoints  $\xi_+,\xi_-$ of $\ell$ are in the interior of $\sigma_+$, $\sigma_-$, repsectively. We can write $\ell=[\xi_-,\xi_+]$. We want to understand the topology on 
$$\Opp(\sigma_+) :=\{\sigma \subset \partial_\infty \Delta \; \vert \; \sigma \text{ opposite } \sigma_+ \}.$$
Notice that $\sigma_- \in \Opp(\sigma_+)$, also $\Opp(\sigma_+)$ is in natural bijection with $\Opp(\xi_+)$, and $\xi_- \in \Opp(\xi_+)$.

\begin{remark}
We claim $\Opp(\sigma_+)\setminus \sigma_-$ is not a subset of $\partial_\infty \Min(\gamma)$.  Recall the convexity properties of $\Min(\gamma)$ from \cite[Chapter II, Prop. 6.2, 6.8]{BH99}: $\Min(\gamma)$ is a closed convex set in $\Delta$, and $\Min(\gamma)$ is the union of all translation axes of $\gamma$. From those properties one can see that $\partial_\infty \Min(\gamma)$ is contained in $Conv_{\partial_\infty  \Delta} (\st_{\partial_\infty  \Delta}(\sigma_+),\st_{\partial_\infty  \Delta}(\sigma_-) )$. In general, that inclusion might be strict. Thus, if there is a geodesic line $\ell$ in $\Min(\gamma)$ with one endpoint $\xi_+$ and which intersects, or it is parallel to, a translation axis of $\gamma$ in a geodesic ray with endpoint $\xi_+$, then the other endpoint of the geodesic line $\ell$ can only be $\xi_-$. Using that, one can deduce that appart from $\sigma_-$ no element from $\Opp(\sigma_+)\setminus \sigma_-$ will be fixed by $\gamma$, and so $\Opp(\sigma_+)\setminus \sigma_- \cap \partial_\infty \Min(\gamma) =\emptyset$.
\end{remark}

\begin{remark}
Because the boundary $\partial_\infty \Delta$ is a compact space with respect to the cone topology on $\Delta \cup \partial_\infty \Delta$, and since the Grassmannian $G/P_{I_{\sigma_+}}$ is also compact, one can deduce that $I_{\sigma_+}(\partial_\infty \Delta)$, which is isomorphic to $G/P_{I_{\sigma_+}}$, is also compact, and thus closed subset of $\partial_\infty \Delta$. 
\end{remark}

\begin{lemma}
\label{lemma_10}
Let $\sigma$ be an ideal simplex in $\partial_\infty \Delta$ and consider two apartments $\mathcal{B}_1,\mathcal{B}_2$ of $\Delta$ such that $\partial_\infty \mathcal{B}_1 \cap \partial_\infty \mathcal{B}_2$ contains $\st(\sigma,\partial_\infty \mathcal{B}_1)$. Then there is a common $\sigma$-cone in both $\mathcal{B}_1$ and $\mathcal{B}_2$.
\end{lemma}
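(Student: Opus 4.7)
The plan is to combine the classical sector lemma with a recession-cone argument in the Euclidean apartment $\mathcal{B}_1$. I begin by observing that $\st(\sigma,\partial_{\infty}\mathcal{B}_1)=\st(\sigma,\partial_{\infty}\mathcal{B}_2)$: the hypothesis gives $\st(\sigma,\partial_{\infty}\mathcal{B}_1)\subset\partial_{\infty}\mathcal{B}_2$, so each chamber in the left-hand side is a chamber of $\partial_{\infty}\mathcal{B}_2$ containing $\sigma$ and hence lies in $\st(\sigma,\partial_{\infty}\mathcal{B}_2)$; equality of cardinalities (the number of chambers of a spherical apartment containing $\sigma$ depends only on the type of $\sigma$) then forces equality. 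Call the chambers in this common set $c_1,\ldots,c_k$. For each $c_i$, the classical sector lemma---that any two apartments of a Euclidean building sharing an ideal chamber $c$ also share a sub-sector pointing to $c$---yields a point $x_i\in\mathcal{B}_1\cap\mathcal{B}_2$ with $Q(x_i,c_i)\subset\mathcal{B}_1\cap\mathcal{B}_2$.

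The heart of the argument is to align these a priori distinct base points into a common apex for a full $\sigma$-cone. Fix an origin in $\mathcal{B}_1$ and identify $\mathcal{B}_1$ with a Euclidean vector space. Since each apartment is geodesically convex and closed in the \cat space $\Delta$, the intersection $\mathcal{B}_1\cap\mathcal{B}_2$ is a closed convex subset of $\mathcal{B}_1$, hence admits a well-defined recession cone
$$
\mathrm{rec}(\mathcal{B}_1\cap\mathcal{B}_2):=\{v\in\mathcal{B}_1 \; \vert \; p+tv\in\mathcal{B}_1\cap\mathcal{B}_2 \text{ for some } p\in\mathcal{B}_1\cap\mathcal{B}_2 \text{ and all } t\geq 0\}.
$$
From $Q(x_i,c_i)\subset\mathcal{B}_1\cap\mathcal{B}_2$ and the fact that $\overline{Q(x_i,c_i)}=x_i+\overline{Q(0,c_i)}$ has recession cone equal to the closed linear Weyl cone $\overline{Q(0,c_i)}$, monotonicity of $\mathrm{rec}$ under inclusion of closed convex sets gives $\overline{Q(0,c_i)}\subset\mathrm{rec}(\mathcal{B}_1\cap\mathcal{B}_2)$ for every $i$. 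Taking the union over $i$ and using that a finite union commutes with closure yields
$$
\overline{Q(0,\sigma,\mathcal{B}_1)}=\bigcup_{i=1}^{k}\overline{Q(0,c_i)}\subset\mathrm{rec}(\mathcal{B}_1\cap\mathcal{B}_2).
$$

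Picking any $y\in\mathcal{B}_1\cap\mathcal{B}_2$, for instance $y=x_1$, the recession-cone property gives $y+\overline{Q(0,\sigma,\mathcal{B}_1)}\subset\mathcal{B}_1\cap\mathcal{B}_2$, and in particular the open $\sigma$-cone $Q(y,\sigma,\mathcal{B}_1)=y+Q(0,\sigma,\mathcal{B}_1)$ is contained in $\mathcal{B}_1\cap\mathcal{B}_2$. To see that this is simultaneously a $\sigma$-cone in $\mathcal{B}_2$, note that each Weyl sector $Q(y,c_i)\subset\mathcal{B}_1$ is the union of the unique \cat geodesic rays of $\Delta$ issuing from $y$ and ending at points of $\overline{c_i}$; since $y$ and $c_i$ both belong to $\mathcal{B}_2$, this same set also equals the Weyl sector $Q(y,c_i)$ computed inside $\mathcal{B}_2$. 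Hence $Q(y,\sigma,\mathcal{B}_1)=Q(y,\sigma,\mathcal{B}_2)$ is the desired common $\sigma$-cone.

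The main obstacle is precisely this alignment step: the sector lemma on its own only produces sub-sectors with \emph{individually} chosen base points $x_i$, with a priori no common apex. The recession-cone formulation packages the geometric content of ``all the chambers $c_i$ share the face $\sigma$'' into the single statement that translating by any vector in $\overline{Q(0,\sigma,\mathcal{B}_1)}$ preserves $\mathcal{B}_1\cap\mathcal{B}_2$, which is exactly what is needed to produce the common $\sigma$-cone.
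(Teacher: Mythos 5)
Your proof is correct and follows essentially the same route as the paper: invoke the sector lemma for each chamber of $\st(\sigma,\partial_\infty\mathcal{B}_1)$ and then align the resulting sub-sectors to a common apex inside the Euclidean apartment. The only difference is that where the paper simply asserts that picking such a common base point is ``easy,'' you make that step precise via the recession cone of the closed convex set $\mathcal{B}_1\cap\mathcal{B}_2$, which in fact shows the slightly stronger statement that \emph{every} point of $\mathcal{B}_1\cap\mathcal{B}_2$ serves as the apex of a common $\sigma$-cone.
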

\begin{proof}
For any ideal chamber $c$ in $\st(\sigma,\partial_\infty \mathcal{B}_1)$, the apartments $\mathcal{B}_1, \mathcal{B}_2$ have a sector in common corresponding to $c$; denote it by $Q_{x_c}(c)$, with $x_c$ its base point.  Now, since all $Q_{x_c}(c)$ are in $\mathcal{B}_1 \cap \mathcal{B}_2$, for any $c \in \st(\sigma,\partial_\infty \mathcal{B}_1)$, by working in a Euclidean space, it is then easy to pick a point $x \in \mathcal{B}_1 \cap \mathcal{B}_2$ and for each $c \in \st(\sigma,\partial \mathcal{B}_1)$ to draw a sector with base point $x$ and pointing towards $c$. Then we get our $\sigma$-cone in both $\mathcal{B}_1$ and $\mathcal{B}_2$ having base point $x$. The lemma is proven.
\end{proof}

For the rest of the section, and for simplicity, \textit{we will suppose that the apartment $\mathcal{A}$ is a subset of $\Min(\gamma)$}. This is because for the applications that we have in mind this will be the case, and as mentioned before it is easier to handle the dynamics of such strongly $I$-regular hyperbolic elements $\gamma$.

In Lemma \ref{lem::lem_36} we have proved that an apartment $\mathcal{B}$ of $\Delta$, containing $\sigma_+,\sigma_-$ in its ideal boundary, uniquely determines and is uniquely determined by $\st(\sigma_+, \partial_{\infty}\mathcal{B})$ and $\st(\sigma_-, \partial_{\infty}\mathcal{B})$. Then take $\sigma'_- \in \Opp(\sigma_+) \setminus \sigma_-$. To $\st(\sigma_+, \partial_{\infty}\mathcal{A})$ there is associated a unique $\st(\sigma'_-, \partial_{\infty}\mathcal{A}') \subset \st_{\partial\Delta}(\sigma_-)$ such that $\st(\sigma'_-, \partial_{\infty}\mathcal{A}')$ and $\st(\sigma_+, \partial_{\infty}\mathcal{A})$ are in the same apartment; this apartment is uniquely determined and denote it by $\mathcal{A}_{\sigma_{+},\sigma'_{-}}$. Now we apply Lemma \ref{lemma_10} to the apartments $\mathcal{A}$ and $\mathcal{A}_{\sigma_{+},\sigma'_{-}}$.  And so we deduce that $\ell \cap \mathcal{A}_{\sigma_{+},\sigma'_{-}}$ is a ray, say $[y_{\sigma'_-}, \xi_+)$, and denote by $\ell_{\sigma'_-}$ the geodesic line in $\mathcal{A}_{\sigma_{+},\sigma'_{-}}$ having endpoints $\xi'_{-}, \xi_+$ and containing  $[y_{\sigma'_-}, \xi_+)$. Notice, $\xi'_{-}$ is opposite $\xi_+$ and $\xi'_{-} \in \sigma'_-$. Since $\xi'_{-}$ and $\sigma'_-$ are not in $\partial_\infty \Min(\gamma)$, when we apply $\gamma$ to the line $\ell_{\sigma'_-}$, we get $\xi'_{-} \neq \gamma(\xi'_{-})$ and $\sigma'_- \neq \gamma(\sigma'_-)$.  But still, $ \gamma^n(\sigma'_-) \in \Opp(\sigma_+)$ and $ \gamma^n(y_{\sigma'_-}) \in \ell$ for every $n  \in \ZZ$. 

\begin{remark}
\label{rem::rem3.31}
For $\gamma$ a strongly $I$-regular hyperbolic element such that there is an apartment $\mathcal{A} \subset \Min(\gamma)$ we always have $\gamma(\Opp(\sigma_+))= \Opp(\sigma_+)$. This is also true without the assumption on the existence of the apartment $\mathcal{A}$ in $\Min(\gamma)$.
\end{remark}

Thus, for every $\sigma'_- \in \Opp(\xi_+)$, we can view some of the open neighborhoods of $\xi'_-$ with respect to the cone topology from the points of the fixed $\gamma$-translation axis $\ell$. This is possible because the cone topology does not depend on the chosen base point (see Proposition \ref{prop::independence_base_point}). Pick a point $x \in \ell$, and $-r \in (x,\xi_-)$. Then we take 
$$V_{x,-r}:=\{\xi'_- \in \Opp(\xi_+) \; \vert \; [-r,x] \subset [x,\xi'_-) \cap \ell \}.$$

\begin{lemma}
\label{lem::lem11}
For every $x \in \ell$ and $-r_1\in (x,\xi_-)$ we have $\gamma(V_{x,-r_1})= V_{\gamma(x),\gamma(-r_1)}$, and $V_{x,-r_2} \subset V_{\gamma(x),\gamma(-r_1)}$ for every $-r_2 \in  (\gamma(-r_1),\xi_-) \cap (x,\xi_-)$.
\end{lemma}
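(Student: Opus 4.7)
The plan is to handle the two assertions separately. The first equality follows directly from $\gamma$ being a type-preserving automorphism that translates along $\ell$ (so $\gamma(\ell)=\ell$ and $\gamma$ preserves the $\xi_-$-direction on $\ell$) and that preserves $\Opp(\xi_+)$ by Remark~\ref{rem::rem3.31} (the opposition is preserved because $\gamma(\xi_+)=\xi_+$ and automorphisms carry opposite simplices to opposite simplices). Concretely, I will take $\xi'_- \in V_{x,-r_1}$, so $[-r_1,x] \subset [x,\xi'_-) \cap \ell$, and apply $\gamma$ to this chain of inclusions to get $[\gamma(-r_1),\gamma(x)] \subset [\gamma(x),\gamma(\xi'_-)) \cap \ell$, hence $\gamma(\xi'_-) \in V_{\gamma(x),\gamma(-r_1)}$. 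The reverse inclusion is obtained by running the same argument with $\gamma^{-1}$, which still translates along $\ell$.

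For the second assertion I will take $\xi'_- \in V_{x,-r_2}$ and aim to prove $\xi'_- \in V_{\gamma(x),\gamma(-r_1)}$. The case $\xi'_- = \xi_-$ is trivial because then $[x,\xi_-)$ and $[\gamma(x),\xi_-)$ are both contained in $\ell$, and so $[\gamma(-r_1),\gamma(x)] \subset [\gamma(x),\xi_-) \cap \ell$ automatically. Otherwise $\sigma'_- \neq \sigma_-$, and I will work inside the apartment $\mathcal{A}_{\sigma_+,\sigma'_-}$ together with the line $\ell_{\sigma'_-}$ (with endpoints $\xi_+$ and $\xi'_-$) introduced before the statement: $\ell$ and $\ell_{\sigma'_-}$ share a common ray $[y_{\sigma'_-},\xi_+)$, and the hypothesis $\xi'_- \in V_{x,-r_2}$ forces the ray $[x,\xi'_-)$ to start along $\ell$ in the $\xi_-$-direction, which places $x$ on the common ray $[y_{\sigma'_-},\xi_+)$; since $\gamma(x)$ lies strictly above $x$ on $\ell$, the same is true of $\gamma(x)$. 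Inside $\mathcal{A}_{\sigma_+,\sigma'_-}$ the geodesic from any point of $\ell_{\sigma'_-}$ to $\xi'_-$ is the corresponding sub-ray of $\ell_{\sigma'_-}$; in particular $[x,\xi'_-) \cap \ell = [x, y_{\sigma'_-}]$ and $[\gamma(x),\xi'_-) \cap \ell = [\gamma(x), y_{\sigma'_-}]$.

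The conclusion is then a linear-ordering argument on $\ell$. From $[-r_2,x] \subset [x,y_{\sigma'_-}]$ I deduce that $y_{\sigma'_-}$ lies at or below $-r_2$ (in the $\xi_-$-direction from $x$); combining this with the hypothesis $-r_2 \in (\gamma(-r_1),\xi_-) \cap (x,\xi_-)$ and with $\gamma(-r_1) < \gamma(x)$ (which follows from $-r_1 < x$ and the order-preservation of $\gamma$ on $\ell$) gives $y_{\sigma'_-} \leq -r_2 < \gamma(-r_1) < \gamma(x)$, so $[\gamma(-r_1),\gamma(x)] \subset [\gamma(x), y_{\sigma'_-}] = [\gamma(x),\xi'_-) \cap \ell$, proving $\xi'_- \in V_{\gamma(x),\gamma(-r_1)}$. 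The main technical point to check is the identification of $[\gamma(x),\xi'_-) \cap \ell$ with $[\gamma(x), y_{\sigma'_-}]$, which is exactly where the apartment $\mathcal{A}_{\sigma_+,\sigma'_-}$ and the description of $\ell \cap \ell_{\sigma'_-}$ as the common ray $[y_{\sigma'_-},\xi_+)$ are indispensable; everything else reduces to a 1-dimensional comparison of points on $\ell$.
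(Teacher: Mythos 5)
Your proof is correct and follows essentially the same route as the paper's: the first identity is immediate from $\gamma(\ell)=\ell$ and $\gamma(\xi_+)=\xi_+$, and the second is the same one-dimensional ordering argument on $\ell$, using that the ray $[x,\xi'_-)$ runs along $\ell$ down to the branch point $y_{\sigma'_-}$ (so the hypothesis forces $y_{\sigma'_-}$ to lie at or below $-r_2$, hence below $\gamma(-r_1)$). If anything, your write-up is more detailed than the paper's, which states these steps without justifying the identification $[\gamma(x),\xi'_-)\cap\ell=[\gamma(x),y_{\sigma'_-}]$.
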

\begin{proof}
The first part just follows from the definition.
The second part goes as follows. Since $\gamma(x) \in (x, \xi_+)$ and $\gamma(-r_1) \in (-r_2, \xi_+)$, then the distance between $\gamma(x)$ and $-r_2$ is larger than the distance from $\gamma(x)$ to $\gamma(-r_1)$, and $[-r_2, x] \subset (\xi_-,\gamma(x))$. Thus for any $\xi'_- \in V_{x,-r_2}$,  the geodesic line $\ell_{\xi'_-}$ will contain the ray $(-r_2, \xi_+)$ and thus by the definitions, $\xi'_-$ will be in $V_{\gamma(x),\gamma(-r_1)}$.
\end{proof}

\begin{lemma}
\label{lem::lem12}
Let $x \in \ell$ and $-r_1\in (x,\xi_-)$. Take $\xi'_- \in \Opp(\xi_+)$. Then there is $N >0$ such that $\xi'_-\in \gamma^n(V_{x,-r_1})$ for every $n >N$.
\end{lemma}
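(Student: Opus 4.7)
My strategy is to apply Lemma~\ref{lem::lem11} iteratively, yielding $\gamma^n(V_{x,-r_1}) = V_{\gamma^n(x), \gamma^n(-r_1)}$, and then to verify directly that $\xi'_- \in V_{\gamma^n(x), \gamma^n(-r_1)}$ once $n$ is large enough. The key intuition is that $\gamma$ translates $\ell$ by $|\gamma|$ toward $\xi_+$, so both $\gamma^n(x)$ and $\gamma^n(-r_1)$ move along $\ell$ toward $\xi_+$ at the same rate, eventually escaping any fixed compact subinterval of $\ell$ in the $\xi_-$ direction.

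Fix $\xi'_- \in \Opp(\xi_+)$, and let $\sigma'_-$ be the minimal ideal simplex of $\partial_\infty \Delta$ containing $\xi'_-$ in its interior; by the barycenter convention, $\xi'_- = \xi_-$ if and only if $\sigma'_- = \sigma_-$. If $\xi'_- = \xi_-$ the conclusion holds trivially for all $n$, since then the geodesic ray $[\gamma^n(x), \xi_-)$ lies entirely on $\ell$ and contains $[\gamma^n(x), \gamma^n(-r_1)]$. Otherwise $\sigma'_- \neq \sigma_-$, and I invoke the construction preceding Remark~\ref{rem::rem3.31} (itself built on Lemma~\ref{lem::lem_36} and Lemma~\ref{lemma_10}) to produce the apartment $\mathcal{A}_{\sigma_+, \sigma'_-}$, the point $y_{\sigma'_-} \in \ell$ with $\ell \cap \mathcal{A}_{\sigma_+, \sigma'_-} = [y_{\sigma'_-}, \xi_+)$, and the bi-infinite geodesic line $\ell_{\sigma'_-} \subset \mathcal{A}_{\sigma_+, \sigma'_-}$ that contains this ray and has $\xi'_-$ as its other endpoint.

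For $n$ sufficiently large, both $\gamma^n(x)$ and $\gamma^n(-r_1)$ lie in the shared ray $(y_{\sigma'_-}, \xi_+) \subset \ell \cap \ell_{\sigma'_-}$, with $\gamma^n(-r_1)$ strictly between $y_{\sigma'_-}$ and $\gamma^n(x)$ (the distance $|x-(-r_1)|$ is preserved by the translation $\gamma^n$ along $\ell$). Since $\gamma^n(x) \in \ell_{\sigma'_-}$ and $\xi'_-$ is the other endpoint of $\ell_{\sigma'_-}$, the geodesic ray $[\gamma^n(x), \xi'_-)$ coincides with the half-line of $\ell_{\sigma'_-}$ issuing from $\gamma^n(x)$ in the direction opposite $\xi_+$; by uniqueness of geodesics in the CAT(0) space $\Delta$, this half-line first retraces $\ell$ backward from $\gamma^n(x)$ to $y_{\sigma'_-}$ and then peels off onto the complementary half of $\ell_{\sigma'_-}$ toward $\xi'_-$. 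In particular $[\gamma^n(-r_1), \gamma^n(x)] \subset [\gamma^n(x), \xi'_-) \cap \ell$, which is exactly the condition $\xi'_- \in V_{\gamma^n(x), \gamma^n(-r_1)} = \gamma^n(V_{x,-r_1})$.

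The main obstacle I expect is the geometric step identifying $[\gamma^n(x), \xi'_-) \cap \ell$ with $[\gamma^n(x), y_{\sigma'_-}]$: one must rule out the ray to $\xi'_-$ leaving $\ell$ before $y_{\sigma'_-}$, or returning to $\ell$ after diverging from it. Both are controlled by the fact that $\ell$ and $\ell_{\sigma'_-}$ are distinct geodesic lines sharing precisely the ray $[y_{\sigma'_-}, \xi_+)$ (uniqueness from Lemma~\ref{lem::lem_36}) together with the CAT(0) property that two geodesics in $\Delta$ cannot recoincide after they have diverged.
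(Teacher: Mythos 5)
Your proof is correct and follows essentially the same route as the paper's: apply Lemma~\ref{lem::lem11} to write $\gamma^n(V_{x,-r_1}) = V_{\gamma^n(x),\gamma^n(-r_1)}$, pick the point $y_{\sigma'_-} \in \ell \cap \ell_{\sigma'_-}$ from the construction preceding Remark~\ref{rem::rem3.31}, and observe that $\gamma^n([-r_1,x])$ eventually lies in the common ray $(y_{\sigma'_-},\xi_+)$, so the ray $[\gamma^n(x),\xi'_-)$, being a sub-ray of $\ell_{\sigma'_-}$, contains $[\gamma^n(-r_1),\gamma^n(x)]$. You simply spell out the geometric step (uniqueness of the geodesic ray to $\xi'_-$ and its agreement with $\ell$ up to $y_{\sigma'_-}$) that the paper leaves implicit.
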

\begin{proof}
We use the same notation as just before Remark \ref{rem::rem3.31}.
 Pick some $y \in \ell \cap \ell_{\sigma'_-}$, where $\xi'_- \in \sigma'_-$.
Since  $\gamma^n(V_{x,-r_1})=V_{\gamma^n(x), \gamma^n(-r_1)}$, and $\gamma^n([-r_1, x])$ converges to $\xi_+$ with respect to the cone topology of $\Delta \cup \partial \Delta$  when $n \to \infty$, one concludes there is indeed $N>0$ such that $\gamma^n([-r_1, x]) \subset (y, \xi_+)$, for every $n \geq N$. The conclusion follows from Lemma \ref{lem::lem11}.
\end{proof}

\begin{corollary}
\label{cor::easy_dinamics}
Let $\Delta$ be a locally finite Euclidean building,  $(W,S)$ the associated irreducible spherical Coxeter system, $I \subsetneq S$, and $\gamma \in \Aut(\Delta)$ a type-preserving, strongly $I$-regular hyperbolic element with attracting and repelling endpoints $\xi_+, \xi_-$ contained, respectively, in the interior of the minimal ideal opposite simplices $\sigma_+,\sigma_-$. Suppose there is an apartment $\mathcal{A}$ of $\Delta$ such that $\mathcal{A} \subset \Min(\gamma)$ and with $\sigma_+, \sigma_- \subset \partial_\infty \mathcal{A}$.

If $V_{\sigma_-}\subset \Opp(\sigma_+)$ is an open neighborhood of $\sigma_-$ with respect to the cone topology on $I_{\sigma_-}(\partial_\infty \Delta)$, then $\lim\limits_{n \to \infty} \gamma^{n}(V_{\sigma_-})=\Opp(\sigma_+)$, where $I_{\sigma_-}$ is the type of $\sigma_-$.
\end{corollary}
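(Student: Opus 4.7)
The plan is to deduce the set convergence from the pointwise dynamics already captured by Lemma \ref{lem::lem12}, after translating the abstract cone-topology neighborhood $V_{\sigma_-}$ into one of the explicit basic sets $V_{x,-r}$ supported on the axis $\ell$. The whole argument then reduces to verifying the two Chabauty conditions of Proposition \ref{prop::chabauty_conv}.

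First I would set up the topology conveniently. By Lemma \ref{lem:3.3} the cone topology on $I_{\sigma_-}(\partial_\infty \Delta)$ is independent of the choice of barycenters, and by Proposition \ref{prop::independence_base_point} it is independent of the base point. Hence we may fix as barycenters $\xi_{\sigma_+} := \xi_+$ and $\xi_{\sigma_-} := \xi_-$ (consistent with the opposite-barycenter convention since $\xi_\pm$ are opposite endpoints of $\ell$), and we may take the base point of the cone topology to lie on the axis $\ell \subset \mathcal{A} \subset \Min(\gamma)$. Unpacking Definition \ref{def::standard_open_neigh_cone_top} under these choices and intersecting with $\Opp(\sigma_+)$, the standard open neighborhoods of $\sigma_-$ in $I_{\sigma_-}(\partial_\infty \Delta)$ correspond, under the canonical bijection $\Opp(\sigma_+) \leftrightarrow \Opp(\xi_+)$ (sending each opposite simplex to its unique ideal point opposite $\xi_+$), precisely to the sets $V_{x,-r}$ defined just before Lemma \ref{lem::lem11}. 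Consequently the given open $V_{\sigma_-}$ contains a basic neighborhood $V_{x,-r_1} \subseteq V_{\sigma_-}$ for some $x \in \ell$ and some $-r_1 \in (x, \xi_-)$.

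Next I would verify the two conditions of Proposition \ref{prop::chabauty_conv} for $\gamma^n(V_{\sigma_-}) \to \Opp(\sigma_+)$. Condition (1) is the heart of the argument: for any $\eta \in \Opp(\sigma_+)$, Lemma \ref{lem::lem12} applied to the corresponding point of $\Opp(\xi_+)$ with base point $x$ and gate $-r_1$ produces an integer $N$ such that $\eta \in \gamma^n(V_{x,-r_1}) \subseteq \gamma^n(V_{\sigma_-})$ for all $n > N$. Thus the constant sequence $\eta_n \equiv \eta$ (for $n > N$) witnesses that $\eta$ is a limit of points in $\gamma^n(V_{\sigma_-})$. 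For condition (2), Remark \ref{rem::rem3.31} gives $\gamma^n(V_{\sigma_-}) \subseteq \Opp(\sigma_+)$ for every $n$, so any subsequential limit of elements of $\gamma^{n_k}(V_{\sigma_-})$ automatically lies in the relevant ambient set, yielding the equality $\lim_{n\to\infty} \gamma^n(V_{\sigma_-}) = \Opp(\sigma_+)$.

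The main obstacle is the first step: rigorously matching the description of neighborhoods of $\sigma_-$ in $I_{\sigma_-}(\partial_\infty \Delta)$ (phrased via abstract barycenters of ideal simplices) with the explicit sets $V_{x,-r}$ (phrased via the endpoint $\xi_-$ and the axis $\ell$). Once this identification is in place, the corollary is a direct consequence of Lemma \ref{lem::lem12}, since that lemma already gives the much stronger fact that \emph{every} $\eta \in \Opp(\sigma_+)$ lies \emph{eventually} in $\gamma^n(V_{\sigma_-})$, not merely that it is a cluster point.
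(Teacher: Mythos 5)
Your proof is correct and follows essentially the same route as the paper: the paper likewise reduces (``without loss of generality'') to the case where $V_{\sigma_-}$ is a basic set $V_{x,-r}$ on the axis $\ell$, notes that such sets are cone-topology neighborhoods of $\sigma_-$ in $I_{\sigma_-}(\partial_\infty \Delta)$, and then invokes Lemma~\ref{lem::lem12}. Your version merely makes explicit two points the paper leaves implicit — the identification of standard neighborhoods with the sets $V_{x,-r}$ via the choice of barycenters/base point, and the precise sense of the set-theoretic limit — which is a welcome but not substantively different elaboration.
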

\begin{proof}
Without loss of generality, we can suppose that $V_{\sigma_-}$ is of the form $V_{x,-r}$, for some $x \in \ell$, and $-r \in (x,\xi_-)$. Notice that such $V_{x,-r}$ is an open neighborhood of $\xi_-$, and thus of $\sigma_-$, with respect to the cone topology on $I_{\sigma_-}(\partial_\infty \Delta)$. Then the conclusion follows from Lemma \ref{lem::lem12}.
\end{proof}

\begin{proposition}
\label{prop::transit_sigma}
Let $\Delta$ be a locally finite Euclidean building, $(W,S)$ the associated irreducible spherical Coxeter system, and $I \subsetneq S$. Let $\sigma_+,\sigma_-$ be two opposite ideal simplices with $\sigma_+$ of type $I$. Let $\{\gamma_n\}_{n\in \NN}$ be a sequence of strongly $I$-regular hyperbolic elements of $\Delta$ such that $\sigma_+,\sigma_-$ are the attracting and repelling minimal ideal simplices of $\gamma_n$, for every $n\in \NN$. Suppose there is an apartment $\mathcal{A}$ of $\Delta$ such that $\mathcal{A} \subset \Min(\gamma_n)$, for every $n\in \NN$, with $\sigma_+, \sigma_- \subset \partial_\infty \mathcal{A}$. Assume that $\lim\limits_{n\to \infty}d(x,\gamma_n(x))=\infty$ for some (hence every) point $x \in \mathcal{A}$.

If $V_{\sigma_-}\subset \Opp(\sigma_+)$ is an open neighborhood of $\sigma_-$ with respect to the cone topology on $I_{\sigma_-}(\partial_\infty \Delta)$, then $\lim\limits_{n \to \infty} \gamma_{n}(V_{\sigma_-})=\Opp(\sigma_+)$.
 \end{proposition}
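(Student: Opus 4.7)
The plan is to adapt the proof of Corollary~\ref{cor::easy_dinamics}, replacing the sequence of iterates $\{\gamma^n\}_{n\in\NN}$ of a single hyperbolic element by the sequence $\{\gamma_n\}_{n\in\NN}$. The key observation is that because $\mathcal{A}\subset\Min(\gamma_n)$ with $\sigma_\pm\subset\partial_\infty\mathcal{A}$ for every $n$, each restriction $\gamma_n|_{\mathcal{A}}$ is a Euclidean translation of length $|\gamma_n|=d(x,\gamma_n(x))$ in the $\xi_+$-direction. Hence any geodesic line $\ell\subset\mathcal{A}$ from some $\xi_-\in\sigma_-$ to some $\xi_+\in\sigma_+$ is simultaneously a translation axis for every $\gamma_n$. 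The hypothesis $d(x,\gamma_n(x))\to\infty$ then takes the place of $|\gamma^n|\to\infty$ used in the proof of Lemma~\ref{lem::lem12}.

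As in the proof of Corollary~\ref{cor::easy_dinamics}, I identify $\sigma\in\Opp(\sigma_+)$ with its barycenter $\xi_\sigma\in\Opp(\xi_+)$ and reduce to the case $V_{\sigma_-}=V_{x,-r}$ for some $x\in\ell$ and $-r\in(x,\xi_-)$. Remark~\ref{rem::rem3.31} applied to each $\gamma_n$ gives $\gamma_n(V_{\sigma_-})\subseteq\Opp(\sigma_+)$ for every $n$, which matches the second condition of Proposition~\ref{prop::chabauty_conv} in the same weak sense used in the single-element Corollary~\ref{cor::easy_dinamics}. The substantive task is therefore to show that every $\sigma'\in\Opp(\sigma_+)$ eventually lies in $\gamma_n(V_{x,-r})$.

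Fix $\sigma'\in\Opp(\sigma_+)$ with barycenter $\xi'$, and apply the construction preceding Remark~\ref{rem::rem3.31}: there is a unique apartment $\mathcal{A}_{\sigma_+,\sigma'}$ containing $\st(\sigma_+,\partial_\infty\mathcal{A})$ together with the appropriate $\partial_\infty$-star of $\sigma'$, and inside it the geodesic line $\ell_{\sigma'}$ from $\xi'$ to $\xi_+$ meets $\ell$ in a ray $[y,\xi_+)$ (invoking Lemma~\ref{lemma_10}). Lemma~\ref{lem::lem11} applied to each $\gamma_n$ yields $\gamma_n(V_{x,-r})=V_{\gamma_n(x),\gamma_n(-r)}$, so it suffices to verify $\xi'\in V_{\gamma_n(x),\gamma_n(-r)}$ for $n$ large. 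Parametrizing $\ell$ with $\xi_+$ at $+\infty$, each $\gamma_n|_\ell$ is translation by $|\gamma_n|\to\infty$, so both $\gamma_n(x)$ and $\gamma_n(-r)$ eventually lie past $y$ on $\ell\cap\ell_{\sigma'}$; uniqueness of geodesics in the Euclidean apartment $\mathcal{A}_{\sigma_+,\sigma'}$ then forces $[\gamma_n(-r),\gamma_n(x)]\subset[\gamma_n(x),\xi')\cap\ell$, giving $\xi'\in V_{\gamma_n(x),\gamma_n(-r)}$ as required.

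The main obstacle I anticipate is bookkeeping rather than any deep difficulty: keeping the three geodesics $\ell$, $\ell_{\sigma'}$, and $[\gamma_n(x),\xi')$ parametrized consistently inside $\mathcal{A}_{\sigma_+,\sigma'}$, and confirming that the initial reduction to a basic $V_{x,-r}$ genuinely captures every cone-topology neighborhood of $\sigma_-$ inside $I_{\sigma_-}(\partial_\infty\Delta)\cap\Opp(\sigma_+)$, an argument in the spirit of Lemma~\ref{lem::open_open_I}.
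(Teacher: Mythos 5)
There is a genuine gap, and it sits in your ``key observation.'' The hypotheses only say that all the $\gamma_n$ share the same \emph{minimal attracting and repelling simplices} $\sigma_\pm$ and that $\mathcal{A}\subset\Min(\gamma_n)$; they do not say that the $\gamma_n$ share an attracting \emph{endpoint}. Each $\gamma_n$ acts on $\mathcal{A}$ as a Euclidean translation whose direction points to its own attracting endpoint $\xi_+^{(n)}$, and these endpoints can vary freely inside the open simplex $\sigma_+$ (this genuinely happens in the intended application, where the $\gamma_n$ range over $T_{\sigma_\pm}$). A fixed line $\ell\subset\mathcal{A}$ with endpoints in the interiors of $\sigma_\pm$ is a translation axis of $\gamma_n$ only when $\xi_+^{(n)}$ coincides with the endpoint of $\ell$; in general $\gamma_n(\ell)$ is merely \emph{parallel} to $\ell$, and $\gamma_n(x)$, $\gamma_n(-r)$ need not lie on $\ell$ at all --- indeed $d\bigl(\gamma_n(x),\ell\bigr)$ can tend to infinity when the directions $\xi_+^{(n)}$ stay bounded away from that of $\ell$. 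This breaks the step ``both $\gamma_n(x)$ and $\gamma_n(-r)$ eventually lie past $y$ on $\ell\cap\ell_{\sigma'}$,'' and also your appeal to Lemma \ref{lem::lem11}: its statement, its proof, and even the meaning of the symbol $V_{\gamma(x),\gamma(-r)}$ presuppose that the base points remain on the fixed axis $\ell$. Your argument is correct exactly in the situations already covered by Corollary \ref{cor::easy_dinamics} (iterates of a single element, where all powers share one axis) or when $\sigma_+$ is a vertex, but the proposition is precisely the extension beyond those cases.

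The paper's proof keeps your overall strategy (reduce to $V_{x,-r}$ and show each $\sigma'\in\Opp(\sigma_+)$ is eventually captured) but replaces the one-dimensional bookkeeping along $\ell$ by a full-dimensional one: it records only that $\gamma_n(\ell)$ is a strongly $I$-regular line parallel to $\ell$, and derives the conclusion from the growth of $\ell\cap\gamma_n\bigl(Q(x,\sigma_-,\mathcal{A})\bigr)$ --- the translate of the whole $\sigma_-$-cone, not of the segment $[-r,x]$ --- which absorbs the unknown transverse drift of the translation directions within $\sigma_+$. To repair your write-up you would need an analogous device controlling that drift, rather than tracking the images of $x$ and $-r$ along $\ell$ itself.
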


\begin{proof}
Fix a point $x \in \mathcal{A}$ and a bi-infinite geodesic line $\ell \subset \mathcal{A}$ such that $x\in \ell$, and the two endpoints of $\ell$ are in the interior of  $\sigma_+,\sigma_-$, respectively. Then without loss of generality, we can suppose that $V_{\sigma_-}$ is of the form $V_{x,-r}$, for some $x \in \ell$, and $-r \in (x,\xi_-)$. Notice that such $V_{x,-r}$ is an open neighborhood of $\xi_-$, and thus of $\sigma_-$, with respect to the cone topology on $I_{\sigma_-}(\partial_\infty \Delta)$. 

Since $\ell$ is strongly $I$-regular in $\mathcal{A}$, and $\mathcal{A} \subset \Min(\gamma_n)$ for any $n$, we have that $\gamma_n(\ell)$ is still a strongly $I$-regular bi-infinite geodesic line in $\mathcal{A}$, which is parallel to $\ell$. Moreover, since $x\in \mathcal{A}$, $\lim\limits_{n\to \infty}d(x,\gamma_n(x))=\infty$, and $\{\gamma_n\}_{n\in \NN}$ is a sequence of strongly $I$-regular hyperbolic elements with the above mentioned properties, we get that $\ell \cap \gamma_n(Q(x, \sigma_-, \mathcal{A}))$ is larger and larger when $n\to \infty$. Recall that $Q(x, \sigma_-, \mathcal{A})$ is the $\sigma_-$ cone in $\mathcal{A}$ with base point $x$. In particular, $\lim\limits_{n\to \infty}(\ell \cap \gamma_n(Q(x, \sigma_-, \mathcal{A})) = \ell$.  This will easily imply that $\lim\limits_{n \to \infty}\gamma_{n}(V_{x,-r})= \Opp(\xi_+)$, which is equivalent to $\lim\limits_{n \to \infty} \gamma_{n}(V_{\sigma_-})=\Opp(\sigma_+)$.
\end{proof}

%

\subsection{Achieving the main goal}
\label{subsec::main_goal}


Let $\Delta$ be a locally finite thick Euclidean building, and let $G$ be a group of type-preserving automorphisms of $\Delta$, with a strongly transitive action on $\Delta$, and thus on $\partial_\infty \Delta$ as well. Fix two opposite Borel subgroups $B^{+}, B^{-}$ of $G$, having their associated ideal chambers in $\partial \Delta$ denoted by $c_+$ and $c_-$,  respectively. The ideal chambers $c_+,c_-$ are opposite. Take $T:= B^{-}\cap B^{+}$ and notice that $T$ fixes pointwise the ideal chambers $c_+, c_-$, and thus the unique ideal apartment in $\partial \Delta$ containing $c_+,c_-$. In particular, $T$ stabilizes setwise the unique apartment $\mathcal{A}$ of $\Delta$ having $c_+,c_-$ in its ideal boundary $\partial_\infty \mathcal{A}$. For a sub-simplex $\sigma_+$ of $c_+$, we denote its type by $I_{\sigma_+}$, and by $T_{\sigma_\pm}$ the set of all strongly $I_{\sigma_+}$-regular hyperbolic elements in $T$. By Theorem \ref{thm:ExistenceStronglyReg_1}, $G$ admits strongly $I_{\sigma_+}$-regular hyperbolic elements, but those might not be in $T_{\sigma_\pm} \subset T$. For the rest of the section we will work under the assumption that $T_{\sigma_\pm}\neq \emptyset$. This is the case when $G$ is for example a  split simple algebraic group over a non-Archimedean local field. Notice that $T_{\sigma_\pm}$ is also the set of all strongly $I_{\sigma_-}$-regular hyperbolic elements in $T$, where $I_{\sigma_-}$ is the type of the sub-simplex $\sigma_-$ of $c_-$ that is opposite $\sigma_+$.

\medskip
Let now $H$ be a closed subgroup of $G$ and $\sigma_+$ a sub-simplex of $c_+$ with opposite $\sigma_-$ in $c_-$.  We will consider the following assumptions on $H$ and $\sigma_\pm$:
\begin{enumerate}
\item[(OP$_{\sigma_-}$)]
 $HG_{\sigma_-}$ is open in $G$
\item[(NRP$_{\sigma_+}$)]
If  $\{a_n\}_{n \in \NN}$ is a sequence of $T_{\sigma_\pm}$ such that every $a_n$ has $\sigma_+$ as its attracting simplex,  $\lim\limits_{n\to \infty} \vert a_n \vert = \infty$, and if $\{a_n H a_n^{-1}\}_{n\in \NN}$ converges to $L$ in the Chabauty topology on $\mathcal{S}(G)$, then $L$ is a subgroup of the parabolic subgroup $P_{\sigma_+}:=G_{\sigma_+}$, and $L$ is normalized by an element of $T_{\sigma_\pm}$. In particular, $L$ fixes pointwise the simplex $\sigma_+$. 
\end{enumerate}

\begin{lemma}
\label{lem::open_H_chamber}
Let $\Delta$ be a locally finite thick Euclidean building, $G$ a group of type-preserving automorphisms of $\Delta$ with a strongly transitive action on $\Delta$, and $G_{\sigma_-}$ the parabolic subgroup of $G$ corresponding to the ideal chamber $\sigma_- \subset \partial_\infty \Delta$. Let $H$ be a closed subgroup of $G$ with property (OP$_{\sigma_-}$). 

Then there is an open neighborhood $V_{\sigma_-}$ of $\sigma_-$ in $I_{\sigma_-}(\partial_\infty \Delta)$, with respect to the cone topology on $\partial_\infty \Delta$, such that $H$ acts transitivley on the set $V_{\sigma_-}$, i.e. $H \sigma_- \cap V_{\sigma_-} = V_{\sigma_-}$.
\end{lemma}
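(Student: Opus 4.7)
The plan is to realize the desired neighborhood $V_{\sigma_-}$ as the $H$-orbit of $\sigma_-$, identified as an open subset of the homogeneous space $G/G_{\sigma_-}$, which in turn is identified with $I_{\sigma_-}(\partial_\infty \Delta)$ through the orbit map.

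First, I would consider the orbit map $\alpha \colon G \to I_{\sigma_-}(\partial_\infty \Delta)$ defined by $\alpha(g) = g(\sigma_-)$. Strong transitivity of $G$ on $\partial_\infty \Delta$ (which, together with type-preservation, implies transitivity on ideal simplices of each fixed type) shows that $\alpha$ is surjective, and its fiber over $\sigma_-$ is exactly the parabolic subgroup $G_{\sigma_-}$. Continuity of the $G$-action on $\Delta \cup \partial_\infty \Delta$ with respect to the cone topology makes $\alpha$ continuous, so $\alpha$ descends to a continuous $G$-equivariant bijection $\tilde\alpha \colon G/G_{\sigma_-} \to I_{\sigma_-}(\partial_\infty \Delta)$.

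Next, I would argue that $\tilde\alpha$ is a homeomorphism, equivalently that $\alpha$ is an open map. Since $\Delta$ is locally finite, $\Aut_0(\Delta)$ (hence its closed subgroup $G$) is a locally compact, second countable topological group. On the other hand, $I_{\sigma_-}(\partial_\infty \Delta)$ is a closed subset of the compact Hausdorff space $\partial_\infty \Delta$ (in the cone topology), hence itself compact Hausdorff. The classical result that a continuous equivariant bijection from a quotient of a locally compact $\sigma$-compact group onto a compact Hausdorff $G$-space is automatically a homeomorphism applies; this upgrades $\tilde\alpha$ to a homeomorphism and makes $\alpha$ open.

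Finally, set $V_{\sigma_-} := \alpha(HG_{\sigma_-}) = H\cdot \sigma_-$. The hypothesis (OP$_{\sigma_-}$) that $HG_{\sigma_-}$ is open in $G$, combined with the openness of $\alpha$, immediately yields that $V_{\sigma_-}$ is open in $I_{\sigma_-}(\partial_\infty \Delta)$. Since $e \in H$, the simplex $\sigma_-$ belongs to $V_{\sigma_-}$, so $V_{\sigma_-}$ is an open neighborhood of $\sigma_-$; and by construction $H$ acts transitively on it, with $H \sigma_- \cap V_{\sigma_-} = V_{\sigma_-}$. The only delicate step is the topological identification $G/G_{\sigma_-} \cong I_{\sigma_-}(\partial_\infty \Delta)$; it rests on the mild standing hypotheses on $G$ (local compactness and second countability) together with compactness and Hausdorffness of the cone topology on $I_{\sigma_-}(\partial_\infty \Delta)$, both of which are available in the present setting.
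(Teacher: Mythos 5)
Your proposal is correct and follows essentially the same route as the paper: both identify $I_{\sigma_-}(\partial_\infty \Delta)$ with $G/G_{\sigma_-}$ as topological $G$-spaces and then push the open set $HG_{\sigma_-}$ through this identification to obtain the open $H$-orbit of $\sigma_-$. The only difference is cosmetic — the paper then shrinks to a standard open neighborhood inside $H\sigma_-$, while you take the whole orbit, and you spell out the open-mapping/homogeneous-space argument that the paper simply recalls.
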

\begin{proof}
Recall that the canonical projection map $G \to G/P_{\sigma_-}$ is continuous and open, and that $G/P_{\sigma_-}$ is homeomorphic to $I_{\sigma_-}(\partial_\infty \Delta) $ by the strongly transitive action of $G$ on $\Delta\cup \partial_\infty\Delta $. Also recall that $I_{\sigma_-}(\partial_\infty \Delta)$ is endowed with the cone topology. Since $HP_{\sigma_-}$ is open in $G$, we can choose a standard open neighborhood $V_{\sigma_-} \subset  I_{\sigma_-}(\partial_\infty \Delta)$ of $\sigma_-$ that is contained in $HP_{\sigma_-}/P_{\sigma_-}$, and such that $H$ acts transitively on $V_{\sigma_-}$, i.e. $H\sigma_- \cap V_{\sigma_-} =V_{\sigma_-}$. 
\end{proof}

Notice that Lemma \ref{lem::open_H_chamber} remains true if we do not assume that $G$ is strongly transitive on $\Delta$ but we have to change the property (OP$_{\sigma_-}$) by asking that  $H\sigma_- \subset I_{\sigma_-}(\partial_\infty \Delta)$ contains an open neighborhood of $\sigma_-$ with respect to $\partial_\infty \Delta$ and the cone topology.

As a consequence of Lemma \ref{lem::open_H_chamber} we get the following.
\begin{remark}
If $H$ is a closed subgroup of $G$ with property (OP$_{\sigma_-}$), then $H$ cannot be a subgroup of the parabolic $P_{\sigma'}$ of any of the sub-simplices $\sigma'$ of  the ideal simplex $\sigma_-$, as otherwise $H\sigma_-$ will not be open in $ I_{\sigma_-}(\partial_\infty \Delta)$ with respect to the cone topology.
\end{remark}

Finally, the main goal of this article is the next theorem. First recall the definition for contractions subgroups $U_{\gamma}^{+}$ from (\ref{equ::contr_subgroup}) given in Section \ref{subsec::para_cont_unit}.

Secondly, let us comment a little on the above two imposed assumptions. The open condition (OP$_{\sigma_-}$) appears naturally when $H$ is the fixed point group of an involution of a connected reductive group $G$ over a non-Archimedean local field. The same will be true for the very unnaturally looking condition (NRP$_{\sigma_+}$) (see \cite{HW93}). Now, in order to prove our main theorem in a very general setting, and to achieve the goal of this article, we need to impose a third property that will be explained in the next remark.

\begin{remark}
\label{rem::third_prop_conv}
Again let $H$ be a closed subgroup of $G$ and $\sigma_+$ a sub-simplex of $c_+$ with opposite $\sigma_-$ in $c_-$.  Assume $T_{\sigma_\pm}\neq \emptyset$, and that for $H$ and a sequence $\{a_n\}_{n \in \NN}$ of $T_{\sigma_\pm}$ the properties (OP$_{\sigma_-}$) and (NRP$_{\sigma_+}$) hold true. Notice that every element of $T_{\sigma_\pm}$ stabilizes the apartment $\mathcal{A}$ and acts on it as a translation, thus $\mathcal{A} \subset \Min(\alpha)$, for any $\alpha \in T_{\sigma_\pm}$.

By Lemma \ref{lem::open_H_chamber} there is a standard open neighborhood $V_{\sigma_-}$ of $\sigma_-$ in $I_{\sigma_-}(\partial_\infty \Delta)$, with respect to the cone topology on $\partial_\infty \Delta$, such that $H$ acts transitivley on the set $V_{\sigma_-}$.  Then, we apply Proposition \ref{prop::transit_sigma} to $V_{\sigma_-}$ and the sequence $\{a_n\}_{n \in \NN}$ of $T_{\sigma_\pm}$, and get that $ \lim\limits_{n \to \infty} a_{n}(V_{\sigma_-})=\Opp(\sigma_+)$.  In particular, since $a_{n}(V_{\sigma_-}) \subset a_n H a_n^{-1}\sigma_-$ is an open neighbohood of $c_-$ on which $a_n H a_n^{-1}$ acts transitively, we get the following. For every given $\sigma' \in \Opp(\sigma_+)$, there exists $N>0$ such that $\sigma' \in a_{n}(V_{\sigma_-})$, for any $n \geq N$. Since $V_{\sigma_-}$ is a standard open neighborhood, so is $a_{n}(V_{\sigma_-})$, with respect to a specific base point in $\Delta$ that depends on $a_n$. So by the transitivity of $ a_n H a_n^{-1}$ on $a_{n}(V_{\sigma_-})$, we can find a sequence $\{g_n \in a_n H a_n^{-1}\}_{n\geq N}$ such that $g_n(\sigma_-)=\sigma'$. Still, it might be the case that there is no convergent subsequence of $\{g_{n}\}_{n\geq \NN}$ in $G$, and so there will be no element in $L$ sending $\sigma_-$ to $\sigma'$.

Let $H_{\sigma_-}:=\{h\in H \; \vert \; h(\sigma_-)=\sigma_- \text{ pointwise}\}$ be the pointwise stabilizer in $H$ of $\sigma_-$. Under the above notation and the assumptions (OP$_{\sigma_-}$) and (NRP$_{\sigma_+}$), we further assume that
\begin{enumerate}
\item[(TranP$_{\sigma_-}$)] For every given $\sigma' \in \Opp(\sigma_+)$ the sequence of subsets $\{g_n a_n H_{\sigma_{-}} a_n^{-1}\}_{n\geq N}$ admits at least one strictly increasing convergent subsequence $\{ p_{n_k} \in g_{n_k} a_{n_k} H_{\sigma_{-}} a_{n_k}^{-1} \}_{n_k}$ in $G$.
\end{enumerate}
\end{remark}

\begin{theorem}
\label{thm::main_goal}
Let $\Delta$ be a locally finite thick Euclidean building, $G$ a closed group of type-preserving automorphisms of $\Delta$ with a strongly transitive action on $\Delta$, and $\sigma_-$ a sub-simplex of $c_- \subset \partial_\infty \Delta$. Let $\sigma_+$ be the sub-simplex of the ideal chamber $c_+$ that is opposite $\sigma_-$, and $H$ a closed subgroup of $G$ with property (OP$_{\sigma_-}$).  Assume that $T_{\sigma_\pm}\neq \emptyset$.

Then for any sequence $\{a_n\}_{n \in \NN}$ of $T_{\sigma_\pm}$ with properties (NRP$_{\sigma_+}$) and (TranP$_{\sigma_-}$) the corresponding Chabauty limit $L$ of $H$ with respect to  $\{a_n\}_{n \in \NN}$ admits the decomposition 
$$ L = U_{\gamma}^{+}(L) L_{\sigma_-},$$
where $\gamma \in T_{\sigma_\pm}$ is from (NRP$_{\sigma_+}$) and normalizes $L$, $U_{\gamma}^{+}(L): = L \cap U_{\gamma}^{+}$ is normal in $L$ with a transitive action on $\Opp(\sigma_+)$, and $L_{\sigma_-}:= L \cap G_{\sigma_+,\sigma_-}$. If moreover $U_{\gamma}^{+}$ acts simple-transitively on $\Opp(\sigma_+)$, then  $U_{\gamma}^{+}(L)= U_{\gamma}^{+}$ and $L = U_{\gamma}^{+} \rtimes L_{\sigma_-}$. 
\end{theorem}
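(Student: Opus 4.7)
My plan is to obtain the decomposition in four stages, matching the assertions in the statement.

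I would first record the structural consequences of (NRP$_{\sigma_+}$): the limit $L$ sits inside $P_\gamma^+ = G_{\sigma_+}$ by Proposition \ref{prop::geom_levi_decom}, and some $\gamma \in T_{\sigma_\pm}$ normalizes $L$. Corollary \ref{coro::geom_levi_decom} shows $U_\gamma^+$ is normal in $G_{\sigma_+}$, so $U_\gamma^+(L) = L \cap U_\gamma^+$ is automatically normal in $L$. Proposition \ref{prop::normal_U} furnishes the genuine semidirect decomposition $G_{\sigma_+} = U_I^+ \rtimes M(G)$, with $M(G) = G_{\sigma_+,\sigma_-} = M_\gamma$, and $U_I^+$ acting simple-transitively on $\Opp(\sigma_+)$. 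I will use the inclusion $U_I^+ \subseteq U_\gamma^+$, which follows as soon as $\gamma^{-n}U_\alpha\gamma^n \to \{e\}$ pointwise for each root $\alpha$ with $\mathcal{A}_I\subset\alpha$---a calculation that succeeds for strongly $I$-regular $\gamma$ because the wall of $\alpha$ is pushed monotonically toward $\sigma_-$.

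Next I would show $L$ acts transitively on $\Opp(\sigma_+)$. Lemma \ref{lem::open_H_chamber} applied to (OP$_{\sigma_-}$) yields a standard open neighborhood $V_{\sigma_-}$ of $\sigma_-$ on which $H$ acts transitively. Since every $a_n \in T$ stabilizes $\mathcal{A}$ and acts on it as a Euclidean translation, $\mathcal{A} \subseteq \Min(a_n)$; combined with $\vert a_n\vert \to \infty$ from (NRP$_{\sigma_+}$), Proposition \ref{prop::transit_sigma} gives $\lim\limits_{n\to\infty} a_n(V_{\sigma_-}) = \Opp(\sigma_+)$. So for each $\sigma' \in \Opp(\sigma_+)$ and $n$ large, some $g_n \in a_n H a_n^{-1}$ satisfies $g_n(\sigma_-) = \sigma'$. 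Assumption (TranP$_{\sigma_-}$) extracts a convergent subsequence $p_{n_k} \in g_{n_k} a_{n_k} H_{\sigma_-} a_{n_k}^{-1} \subseteq a_{n_k} H a_{n_k}^{-1}$ whose limit $p$ lies in $L$ by Proposition \ref{prop::chabauty_conv}, with $p(\sigma_-) = \sigma'$ by continuity.

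The heart of the proof is upgrading $p$ to an element of $U_\gamma^+(L)$. I would write $p = u\cdot m$ in the semidirect product $U_I^+ \rtimes M(G)$, where $u\in U_I^+$ is the unique element sending $\sigma_-$ to $\sigma'$. It suffices to show $m \in L$, which yields $u = p m^{-1} \in L \cap U_I^+ \subseteq U_\gamma^+(L)$. Because $\gamma \in T \subseteq M(G)$ normalizes both semidirect factors,
$$\gamma^{-n} p \gamma^n = (\gamma^{-n} u \gamma^n)(\gamma^{-n} m \gamma^n) \in L$$
for every $n \in \NN$. The first factor tends to the identity by $u \in U_I^+ \subseteq U_\gamma^+$, while $m \in M_\gamma$ makes $\{\gamma^{-n} m \gamma^n : n \in \ZZ\}$ precompact in $M_\gamma$. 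A Poincar\'e-recurrence argument inside the compact closure of this orbit, exploiting that $\overline{\langle \mathrm{Ad}(\gamma)\rangle}$ acts on that closure as a group of homeomorphisms, produces a subsequence $\gamma^{-n_j} m \gamma^{n_j} \to m$. Consequently $\gamma^{-n_j} p \gamma^{n_j} \to m$, and the closedness of $L$ forces $m \in L$.

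With transitivity of $U_\gamma^+(L)$ on $\Opp(\sigma_+)$ in hand, the two remaining conclusions are immediate. For any $\ell \in L$, I choose $u \in U_\gamma^+(L)$ with $u(\sigma_-) = \ell(\sigma_-)$; then $u^{-1}\ell \in L \cap G_{\sigma_+,\sigma_-} = L_{\sigma_-}$, giving $L = U_\gamma^+(L) L_{\sigma_-}$. If additionally $U_\gamma^+$ acts simple-transitively on $\Opp(\sigma_+)$, then $U_\gamma^+ \cap G_{\sigma_+,\sigma_-} = \{e\}$ makes the product semidirect, and the uniqueness clause combined with the transitivity of $U_\gamma^+(L) \subseteq U_\gamma^+$ forces $U_\gamma^+ \subseteq U_\gamma^+(L)$, hence equality. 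The principal obstacle I foresee is the recurrence step: without further hypotheses on the dynamics of $\mathrm{Ad}(\gamma)$ on $M(G)$, one a priori obtains only \emph{some} subsequential limit $m_\infty \in L_{\sigma_-}$ rather than $m$ itself. This difficulty is invisible in the motivating applications to split reductive groups over local fields, where $T$ centralizes $M(G)$ and the argument collapses to $\gamma^{-n} m \gamma^n = m$.
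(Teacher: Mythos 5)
Your overall architecture is close to the paper's in two of its three stages: the transitivity of $L$ on $\Opp(\sigma_+)$ is obtained exactly as in the paper (Lemma \ref{lem::open_H_chamber} applied to (OP$_{\sigma_-}$), then Proposition \ref{prop::transit_sigma}, then (TranP$_{\sigma_-}$) together with Proposition \ref{prop::chabauty_conv}), and your deduction of $L=U_\gamma^+(L)L_{\sigma_-}$ \emph{from} transitivity of $U_\gamma^+(L)$ on $\Opp(\sigma_+)$ is correct. The problem is the middle step, where you upgrade $p\in L$ with $p(\sigma_-)=\sigma'$ to an element of $U_\gamma^+(L)$, and there the gap is genuine --- in fact there are two. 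First, you factor $p=um$ through the semidirect product $U_I^+\rtimes M(G)$ of Proposition \ref{prop::normal_U} and invoke $U_I^+\subseteq U_\gamma^+$; but the paper explicitly remarks at the end of Section \ref{subsec::para_cont_unit} that for general closed $G\le\Aut_0(\Delta)$ it is unclear whether $U_I^\pm\le U_\gamma^\pm$, and your one-line justification does not work as stated: $\gamma$ fixes $\partial_\infty\mathcal{A}$ pointwise, so it fixes every spherical root $\alpha$ and $\gamma^{-n}U_\alpha\gamma^{n}=U_{\gamma^{-n}(\alpha)}=U_\alpha$ as a set; contraction of individual elements would require an analysis of affine root filtrations that you do not carry out. (This first gap is repairable: Corollary \ref{coro::geom_levi_decom} gives $G_{\sigma_+}=U_\gamma^+M_\gamma$ directly, so one can write $p=u'm'$ with $u'\in U_\gamma^+$, $m'\in M_\gamma$, and still $u'(\sigma_-)=\sigma'$.) Second, and fatally, the recurrence step fails: for an automorphism and an element $m$ with precompact two-sided orbit it is in general \emph{false} that some subsequence $\gamma^{-n_j}m\gamma^{n_j}$ returns to $m$. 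For instance, in $(\ZZ/2\ZZ)^{\ZZ}\rtimes\ZZ$ with $\gamma$ the shift, the element with a single nonzero coordinate lies in $M_\gamma$ yet all of its nontrivial conjugates converge to the identity, so it is not recurrent. As you yourself note, all one gets is \emph{some} accumulation point $m_\infty\in L_{\sigma_-}$; that yields $\gamma^{-n_j}p\gamma^{n_j}\to m_\infty\in L$ but not $m\in L$, hence not $u\in L$, and the transitivity of $U_\gamma^+(L)$ is not established.

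The paper closes exactly this gap by quoting the Baumgartner--Willis machinery rather than reproving it: for $v\in L\le P_\gamma^+$ the set $\{\gamma^{-n}v\gamma^n\}$ is bounded, an accumulation point $w$ of it lies in $L$ and has bounded two-sided orbit, hence $w\in L_{\sigma_-}$, and then \cite[Lemma 3.9(3), Theorem 3.8]{BaWil}, applied to the group $L$ with the $\gamma$-stable closed subgroup $L_{\sigma_-}$, give $v\in U_\gamma^+(L)L_{\sigma_-}$. This ``contraction modulo a subgroup'' theorem is precisely the substitute for your recurrence argument; it is also why the paper's logical order is the reverse of yours (decomposition first, transitivity of $U_\gamma^+(L)$ on $\Opp(\sigma_+)$ deduced afterwards by writing $\ell=um$ and noting that $m$ fixes $\sigma_-$). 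If you replace your recurrence step by an appeal to that theorem, the rest of your argument goes through.
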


\begin{proof}
Agian notice that every element of $T_{\sigma_\pm}$ stabilizes the apartment $\mathcal{A}$ and acts on it as a translation, thus $\mathcal{A} \subset \Min(\alpha)$, for any $\alpha \in T_{\sigma_\pm}$. 

Consider a sequence $\{a_n\}_{n \in \NN}$ of $T_{\sigma_\pm}$ with properties (NRP$_{\sigma_+}$) and (TranP$_{\sigma_-}$). Then, for every given $\sigma' \in \Opp(\sigma_+)$ we get at least one convergent subsequence $\{p_{n_k}\}_{n_k}$ whose limit in $G$ will be in $L$ by the definition of the Chabauty topology, and will send $\sigma_-$ to $\sigma'$. So  $L$ acts transitively on $\Opp(\sigma_+)$.
  
By property  (NRP$_{\sigma_+}$) we know that $L\leq P_{\sigma_+}:=G_{\sigma_+}$ and $L$ is normalized by an element $\gamma$ of $T_{\sigma_\pm}$. Recall from Proposition \ref{prop::geom_levi_decom} and Corollary \ref{coro::geom_levi_decom} that we get that 
$$P^{+}_{\gamma} = G_{\sigma_+}= P_{\sigma_+}= U_{\gamma}^{+} M_{\gamma},$$
where $M_{\gamma}:= P^{+}_{\gamma} \cap P^{-}_{\gamma}=G_{\sigma_+,\sigma_-}$, and $U_{\gamma}^{+}$ is normal in $G_{\sigma_+}$.

We claim we can apply \cite[Theorem 3.8 and Corollary 3.17]{BaWil} and obtain the wanted decomposition $L = U_{\gamma}^{+}(L) L_{\sigma_-}$. Indeed, we know that $\gamma \in T_{\sigma_\pm}$ is a strongly $I_{\sigma_+}$-regular hyperbolic element that normalizes $L$. Then, since $\gamma \in G_{\sigma_+, \sigma_-}$ we get that $L_{\sigma_-} = L\cap G_{\sigma_+, \sigma_-} = (\gamma L \gamma^{-1}) \cap (\gamma G_{\sigma_+, \sigma_-} \gamma^{-1}) =  \gamma L_{\sigma_-} \gamma^{-1}$, so $L_{\sigma_-}$ is $\gamma$-stable. Since $\gamma$ is an automorphism of $L$ as well, we are able to apply \cite[Theorem 3.8]{BaWil} to $G:=L$, $H:=L_{\sigma_-}$, and $U_{\alpha}:= L \cap U_{\gamma}^{+}= U_{\gamma}^{+}(L)$, which is easy to see is a normal subgroup of $L$ since $L \leq  P^{+}_{\gamma} = G_{\sigma_+}$. Therefore, we get that $U_{\gamma}^{+}(L)_{/ L_{\sigma_-}}=  U_{\gamma}^{+}(L) L_{\sigma_-}$ (here we import the notation from \cite{BaWil}).  Also, one can notice that by the definitions we have that $U_{\gamma}^{+}(L) L_{\sigma_-} \leq L$. Thus, as in the proof of \cite[Corollary 3.17]{BaWil}, it is enought to show that $L \subset U_{\gamma}^{+}(L)_{/L_{\sigma_-}}$. For completeness, we recall the proof here. Take $v \in L$, and since $L \leq G_{\sigma_+}=P^{+}_{\gamma}$ we have that $\{\gamma^{-n} v \gamma ^{n} \; \vert \; n\in \NN\} \subset L$ is bounded. Let $w$ be an accumulation point of that set, so $w$ is a limit of a subsequence in $\{\gamma^{-n} v \gamma ^{n}\}_{n\in \NN}$, in particular $w \in L$ as $L$ is closed and normalized by $\gamma$.  Now for every $m \in \ZZ$ we also have that $\gamma^{m} w \gamma^{-m}$ is again some accumulation point of $\{\gamma^{-n} v \gamma ^{n}\}_{n\in \NN}$. In particular, we have that the set $\{\gamma^{m} w \gamma^{-m} \; \vert \; m\in \ZZ\}$ is contained in the compact set $\overline{\{\gamma^{-n} v \gamma ^{n} \; \vert \; n \in \NN\}}$, thus bounded as well. This implies that $w \in L_{\sigma_-} = L \cap G_{\sigma_+} \cap G_{\sigma_-}$ which is a closed subgroup of $L$. Apply \cite[Lemma 3.9 part 3]{BaWil} to conclude that $v \in  U_{\gamma}^{+}(L)_{/ L_{\sigma_-}}$ as wanted.

Moreover, by the transitivity of $L$ on $\Opp(\sigma_+)$, the transitivity of $U_{\gamma}^{+}(L)$ on $\Opp(\sigma_+)$ follows as well. From here the conclusion of the theorem follows easily.
\end{proof}

%
%
%

\begin{bibdiv}
\begin{biblist}

\bib{AB}{book}{
   author={Abramenko, Peter},
   author={Brown, Kenneth S.},
   title={Buildings},
   series={Graduate Texts in Mathematics},
   volume={248},
   note={Theory and applications},
   publisher={Springer},
   place={New York},
   date={2008},
}

\bib{BaWil}{article}{
   author={Baumgartner, U.},
   author={Willis, G.A.},
   title={Contraction groups and scales of automorphisms of totally disconnected locally compact groups},
   journal={Israel J. of Math.},
   date={2004},
   number={142},
   pages={221--248},
   doi={10.1007/BF02771534},
}


\bib{BH99}{book}{
   author={Bridson, Martin R.},
   author={Haefliger, Andr\'{e}},
   title={Metric spaces of non-positive curvature},
   series={Grundlehren der mathematischen Wissenschaften [Fundamental
   Principles of Mathematical Sciences]},
   volume={319},
   publisher={Springer-Verlag, Berlin},
   date={1999},
   pages={xxii+643},
   isbn={3-540-64324-9},
   review={\MR{1744486}},
   doi={10.1007/978-3-662-12494-9},
}

\bib{BrTi_72}{article}{
   author={Bruhat, F.},
   author={Tits, J.},
   title={Groupes r\'{e}ductifs sur un corps local},
   language={French},
   journal={Inst. Hautes \'{E}tudes Sci. Publ. Math.},
   number={41},
   date={1972},
   pages={5--251},
   issn={0073-8301},
   review={\MR{327923}},
}
		

\bib{CEM}{book}{
author={Canary, Richard}, 
author={Epstein, David},
author={Marden, Albert}, 
title={Fundamentals of Hyperbolic Manifolds: Selected Expositions}, 
series={London Math Society Lecture Notes Series}, 
volume={328}, 
publisher={Cambridge University Press},
date={2006}
}

\bib{CaCi}{article}{
   author={Caprace, Pierre-Emmanuel},
   author={Ciobotaru, Corina},
   title={Gelfand pairs and strong transitivity for Euclidean buildings},
   journal={Ergodic Theory Dynam. Systems},
   volume={35},
   date={2015},
   number={4},
   pages={1056--1078},
   issn={0143-3857},
   review={\MR{3345164}},
   doi={10.1017/etds.2013.102},
}

  \bib{Ch}{article}{
   author={Chabauty, Claude},
   title={Limite d'ensembles et g\'eom\'etrie des nombres},
   language={French},
   journal={Bull. Soc. Math. France},
   volume={78},
   date={1950},
   pages={143--151},
   issn={0037-9484},
   review={\MR{0038983}},
}

\bib{Cio_M}{article}{
   author={Ciobotaru, C.},
   title={Strong Transitivity, the Moufang Condition and the Howe--Moore Property},
   journal={Transformation Groups},
   date={2022},
 note={doi.org/10.1007/s00031-022-09766-0},
   }

\bib{Cio}{article}{
   author={Ciobotaru, C.},
   title={A unified proof of the Howe--Moore property},
   journal={Journal of Lie Theory},
   volume={25},
   date={2015},
   pages={65--89},
   issn={1083-4362},
 note={arXiv:1403.0223},
   }


\bib{CMRH}{article}{
   author={Ciobotaru, Corina},
   author={M\"{u}hlherr, Bernhard},
   author={Rousseau, Guy},
   title={The cone topology on masures},
   note={With an appendix by Auguste H\'{e}bert},
   journal={Adv. Geom.},
   volume={20},
   date={2020},
   number={1},
   pages={1--28},
   issn={1615-715X},
   review={\MR{4052945}},
   doi={10.1515/advgeom-2019-0020},
}


   \bib{CoPau}{book}{
  author={Courtois, Gilles},
   author={Dal'Bo, Fran\c{c}oise},
   author={Paulin, Fr\'ed\'eric},
   title={Sur la dynamique des groupes de matrices et applications arithm\'etiques},
   note={Journ\'ees math\'ematiques X-UPS 2007, \\ \url{http://www.math.polytechnique.fr/xups/xups07-02.pdf}},
 pages={158},
 date={2007},
}

 \bib{Gar97}{book}{
  author={Garrett, P.},
  title={Buildings and Classical Groups},
  publisher={Chapman and Hall},
  date={1997},
}

\bib{HW93}{article}{
   author={Helminck, A. G.},
   author={Wang, S. P.},
   title={On rationality properties of involutions of reductive groups},
   journal={Adv. Math.},
   volume={99},
   date={1993},
   number={1},
   pages={26--96},
   issn={0001-8708},
   review={\MR{1215304}},
   doi={10.1006/aima.1993.1019},
}


\bib{Pra}{article}{
   author={Prasad, Gopal},
   title={Strong approximation for semi-simple groups over function fields},
   journal={Ann. of Math. (2)},
   volume={105},
   date={1977},
   number={3},
   pages={553--572},
   issn={0003-486X},
   review={\MR{444571}},
   doi={10.2307/1970924},
}



\bib{Ro}{book}{
   author={Ronan, Mark},
   title={Lectures on Buildings},
   volume={7},
   publisher={Academic Press},
   date={1989},
   pages={201},
}

%
%

\end{biblist}
\end{bibdiv}

\end{document}